\documentclass[12pt]{amsart}
\usepackage{amssymb, eucal, amsfonts, amsmath, xypic,latexsym}
\usepackage{amsfonts}
\usepackage{mathrsfs}

\textwidth = 6in
\textheight = 9in
\pagestyle{plain}
\oddsidemargin=0cm
\evensidemargin=0cm
\topmargin=0cm

\input {cyracc.def}

 %scaled \magstephalf
 %scaled \magstephalf
 %scaled \magstephalf

\newtheorem{lem}{Lemma}
\newtheorem{thm}{Theorem}
\newtheorem{prop}{Proposition}
\newtheorem{rem}{Remark}
\newtheorem{cor}{Corollary}

%Letters for rings and fields
\newcommand{\C}{\mathbb{C}}
\newcommand{\Z}{\mathbb{Z}}
\newcommand{\N}{\mathbb{N}}

\newcommand{\K}{\mathbb{K}}
\newcommand{\LL}{\mathfrak{q}}

\newcommand{\tcx}{\widetilde{\mathcal{I}}}

%Letters for Lie algebras
\newcommand{\g}{\mathfrak{g}}
\newcommand{\hh}{\mathfrak{h}}

\newcommand{\n}{\mathfrak{n}}

\newcommand{\gl}{\mathfrak{gl}}
\newcommand{\h}{\mathfrak{k}}

\newcommand{\pp}{\mathfrak{p}}
\newcommand{\tg}{\mathfrak{q}}

%General text entries
\newcommand{\rank}{\text{rank}}
\newcommand{\Specm}{\text{Specm}}
\newcommand{\ZZ}{\mathcal{Z}}
\newcommand{\ind}{\textnormal{ind}}

\newcommand{\ad}{\text{ad}}

\newcommand{\im}{\text{Im}}
\newcommand{\Hom}{\text{Hom}}

\newcommand{\Ad}{\textnormal{Ad}}
\newcommand{\gr}{\text{gr }}
\newcommand{\spn}{\text{span}}
\newcommand{\Lie}{\text{Lie}}

\newcommand{\codim}{\text{codim}}

\newcommand{\sgn}{\text{sgn}}
\newcommand{\chr}{\text{char}}
\newcommand{\Ind}{\text{Ind}}

\renewcommand{\th}{\text{th}}

% Misc letters
\newcommand{\Zas}{\mathcal{Z}}
\newcommand{\X}{\mathcal{I}}
\newcommand{\CC}{\mathcal{C}}
\newcommand{\m}{\mathfrak{m}}

\newcommand{\Sf}{\mathfrak{S}}

\newcommand{\II}{\mathcal{D}}
\newcommand{\tG}{Q}
\newcommand{\gS}{\mathfrak{S}}
\newcommand{\BB}{\mathcal{B}}
\newcommand{\V}{\mathcal{W}}

%Misc Symbols

\newcommand{\ra}{\rightarrow}
\newcommand{\tR}{R}
\newcommand{\tr}{\mathfrak{r}}

\title{Invariants of Centralisers in Positive Characteristic}
\author{Lewis W. Topley}
\begin{document}
\maketitle

\begin{abstract}
Let $\tG$ be a simple algebraic group of type $A$ or $C$ over a field of good positive characteristic. We show for any $x \in \tg =\Lie(Q)$ that the invariant algebra $S(\tg_x)^{\tg_x}$ is generated by the $p^\th$ power subalgebra and the mod $p$ reduction of the characteristic zero invariant algebra. The latter algebra is known to be polynomial \cite{PPY} and we show that it remains so after reduction. Using a theory of symmetrisation in positive characteristic we prove the analogue of this result in the enveloping algebra, where the $p$-centre plays the role of the $p^\th$ power subalgebra. In Zassenhaus' foundational work \cite{Zas}, the invariant theory and representation theory of modular Lie algebras were shown to be explicitly intertwined. We exploit his theory to give a precise upper bound for the dimensions of simple $\tg_x$-modules.

When $\g$ is of type $A$ and $\g = \h \oplus \pp$ is a symmetric decomposition of orthogonal type we use similar methods to show that for every nilpotent $e \in \h$ the invariant algebra $S(\pp_e)^{\h_e}$ is generated by the $p^\th$ power subalgebra and $S(\pp_e)^{K_e}$ which is also shown to be polynomial.
\end{abstract}

\section{Introduction}
In \cite{PPY} Premet forumlated the conjecture: if $\tG$ is a reductive group over $\C$ and $x\in \tg$ then $S(\tg_x)^{\tG_x}$ is a polynomial algebra on $\rank(\tg)$ variables. The authors of \cite{PPY} went on to confirm the conjecture for simple groups of type $A$ and $C$. Some partial results were obtained in types $B$ and $D$. A counterexample was found by Yakimova in \cite{Yak3}; here $x$ is a long root vector in type $E_8$. It is hopeful that the conjecture will hold in type $B$ and $D$ although the currently known methods are insufficient to present a full set of invariants in all cases. In a completely separate work \cite{BB} Brown and Brundan proved the conjecture again in type $A$ using very different methods. We shall go into greater depth and compare the two approaches below.

The purpose of this article is to bring the above invariant theoretic discussion into the characteristic $p$ realm, and exploit some combinatorial techniques to study the representation theory of the centralisers $\tg_x$ in type $A$ and $C$. When an algebraic group may be reduced modulo $p$, in an appropriate sense, we have groups $\tG$, $\tG_p$ and their respective Lie algebras $\tg$, $\tg_p$. If $\tG$ is reductive and the characteristic of the field is very good for $\tG$ then it is known that $S(\tg_p)^{\tg_p}$ is generated by $S(\tg_p)^p$ and the mod $p$ reduction of $S(\tg)^{\tG}$, and that similar theorem holds for the invariants in the enveloping algebra. This is the most concise description of the algebra of invariants which we could hope for, and Kac conjectured that it should hold in general \cite{Kac}. A simple counterexample is given by the three dimensional solvable algebraic Lie algebra spanned by $\{h, a, b\}$ with non-zero brackets $[h,a] = a$ and $[h,b] = b$, where $a^{p-1} b$ is a example of an invariant which is not generated in this way. Despite failing in general, we shall show that this nice behaviour holds for centralisers in type $A$ and $C$, thus giving us a complete description of the symmetric invariant algebras in these cases.

Let us now introduce the notation required to state our first theorem. These notations shall be used from henceforth without exception. Let $N \in \N$, let $\K$ be an algebraically closed field of characteristic $p > 0$ and let $V$ be an $N$-dimensional vector space. The group $G = GL(V)$ acts by conjugation on its Lie algebra $\g = \mathfrak{gl}(V)$. Choose a non-degenerate bilinear form $(\cdot, \cdot) : V \oplus V \ra \K$ which is either symmetric or skew. We write $(u, v) = \epsilon (v, u)$ with $\epsilon = \pm 1$. The subgroup of $G$ preserving the form shall be denoted $K$, and is either an orthogonal group or a symplectic group. Whenever we discuss such a group, we shall assume that $\chr(\K) \neq 2$. The Lie algebra shall be denoted $\h$ and is equal to the set of all $x \in \g$ which are skew self-adjoint with respect to $(\cdot,\cdot)$. If we choose a basis for $V$ then $(\cdot, \cdot)$ takes the form of a matrix $(u, v) = u^\top J v$. There is a Lie algebra automorphism $\sigma : \g \ra \g$ of order 2 defined by $$\sigma(X)  = -J^{-1} X^\top J$$ which is independent of our choice of basis. Then $\h$ coincides with the $+1$ eigenspace of $\sigma$. The $-1$ eigenspace shall be denoted $\pp$. We have $\g = \h \oplus \pp$ and $\pp$ is a $K$-module. 

If $x \in \g$ then we may identify $\g_x$ with $\Lie(G_x)$ \cite[Theorem~2.5]{Jan}. The dual space to $\g_x$ will be denoted $\g_x^\ast$. If furthermore $x\in \h$ then we may identify $\h_x$ with $\Lie(K_x)$ for the same reason and, since $\sigma(x) = x$, the involution $\sigma$ induces a decomposition $\g_x = \h_x \oplus \pp_x$ which is $K_x$-stable. The dual space $\h_x^\ast$ identifies (as a $K_x$-module) with the annihilator of $\pp_x$ in $\g_x$, whilst $\pp_x^\ast$ identifies with the annihilator of $\h_x$ in $\g_x^\ast$. By duality we identify the symmetric algebra $S(\g_x)$ with $\K[\g_x^\ast]$ as $G_x$-modules, and identify $S(\h_x)$ and $S(\pp_x)$ with $\K[\h_x^\ast]$ and $\K[\pp_x^\ast]$ as $K_x$-modules. Using these identifications we obtain restriction maps $S(\g_x) \ra S(\h_x)$ and $S(\g_x) \ra S(\pp_x)$ which are $K_x$-module homomorphisms. The $p^\th$ power subalgebras shall be denoted $\K[\g_x^\ast]^p, \K[\h_x^\ast]^p, \K[\pp_x^\ast]^p$ respectively.

\begin{thm}\label{main1} Suppose that $\epsilon = -1$ so that $K$ is of type $C$. If $\tG \in \{G, K\}$ is of rank $l$ and $x \in \tg = \Lie(\tG)$ then
\begin{enumerate} 
\item{$\K[\tg_x^\ast]^{\tg_x}$ is free of rank $p^{l}$ over $\K[\tg^\ast_x]^p$;}
\smallskip
\item{$\K[\tg_x^\ast]^{\tG_x}$ is a polynomial algebra on $l$ generators;}
\smallskip
\item{$\K[\tg^\ast_x]^{\tg_x} \cong \K[\tg^\ast_x]^p \otimes_{(\K[\tg^\ast_x]^p)^{\tG_x}} \K[\tg_x^\ast]^{\tG_x}$.}
\end{enumerate}
\end{thm}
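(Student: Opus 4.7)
I will establish the three assertions in the order (2), (3), (1), since (1) follows formally from the first two. The backbone is a careful reduction modulo $p$ of the characteristic-zero construction of \cite{PPY}, together with a field-theoretic degree computation. Throughout I set $L := \Fra(\K[\tg_x^\ast])$ and $n := \dim \tg_x$.

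For (2), take an integral model of the PPY polynomial generators $F_1,\dots,F_l$ of $\Q[\g_x^\ast]^{\g_x}$. In types $A$ and $C$ these are constructed from the classical characteristic-polynomial invariants by a shifted-argument recipe driven by a Jacobson--Morozov $\sl_2$-triple, and can be arranged to have coefficients in $\Z_{(p)}$ whenever $p$ is good. Reducing modulo $p$ yields $\bar F_1,\dots,\bar F_l \in \K[\tg_x^\ast]^{\tG_x}$, algebraically independent by a Jacobian computation at a point of the Kostant-type section from \cite{PPY}: the Jacobian is non-degenerate in characteristic zero and the construction is $\Z_{(p)}$-integral, so non-degeneracy persists after reduction. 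A transcendence-degree count (using $\ind(\tg_x)=l$, the Panyushev--Premet--Yakimova theorem in types $A$ and $C$) together with a Hilbert-series comparison traced from characteristic zero identifies the polynomial subalgebra $\K[\bar F_1,\dots,\bar F_l]$ with the whole of $\K[\tg_x^\ast]^{\tG_x}$.

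For (3), the natural multiplication map
$$
\phi : \K[\tg_x^\ast]^p \otimes_{(\K[\tg_x^\ast]^p)^{\tG_x}} \K[\tg_x^\ast]^{\tG_x} \ra \K[\tg_x^\ast]^{\tg_x}
$$
is well defined: $p^{\th}$ powers are killed by all derivations, and $\tG_x$-invariants are $\tg_x$-invariant ($\tG_x$ being smooth in good characteristic). Since there is no non-trivial $p^{\th}$ root of unity in characteristic $p$, one has $(\K[\tg_x^\ast]^p)^{\tG_x}=(\K[\tg_x^\ast]^{\tG_x})^p=\K[\bar F_1^p,\dots,\bar F_l^p]$, so the source of $\phi$ is a free $\K[\tg_x^\ast]^p$-module of rank $p^l$ with basis $\{\bar F_1^{a_1}\cdots \bar F_l^{a_l} : 0 \le a_i < p\}$. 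Injectivity of $\phi$ follows from algebraic independence of the $\bar F_i$ over $\K[\tg_x^\ast]^p$. For surjectivity, $\tg_x$ is restricted (since $[y,x]=0$ implies $[y^{[p]},x]=(\ad y)^p(x)=0$), so Jacobson--Bourbaki yields $[L:L^{\tg_x}]=p^{n-l}$ and hence $[L^{\tg_x}:L^p]=p^l$. Each $\bar F_i$ satisfies $T^p - \bar F_i^p = 0$ over $L^p$ and algebraic independence forces $[L^p(\bar F_1,\dots,\bar F_l):L^p]=p^l$, so this subfield of $L^{\tg_x}$ exhausts it. Hence the image $R$ of $\phi$ and $\K[\tg_x^\ast]^{\tg_x}$ have common fraction field $L^{\tg_x}$, and both are integral over $\K[\tg_x^\ast]^p$. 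Equality then reduces to normality of $R$: as a relative complete intersection $\K[\tg_x^\ast]^p[T_1,\dots,T_l]/(T_i^p - \bar F_i^p)$ it is Cohen--Macaulay, and its defining Jacobian has full rank off a closed subset of codimension at least two, owing to the flatness properties of the PPY quotient map in types $A$ and $C$. Serre's criterion then gives normality of $R$, whence $R=\K[\tg_x^\ast]^{\tg_x}$.

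Given (2) and (3), statement (1) is a formal consequence: $\K[\tg_x^\ast]^{\tG_x}=\K[\bar F_1,\dots,\bar F_l]$ is free of rank $p^l$ over $(\K[\tg_x^\ast]^p)^{\tG_x}=\K[\bar F_1^p,\dots,\bar F_l^p]$, and tensoring up via (3) realises $\K[\tg_x^\ast]^{\tg_x}$ as a free $\K[\tg_x^\ast]^p$-module of rank $p^l$ with basis $\{\bar F_1^{a_1}\cdots\bar F_l^{a_l}:0\le a_i<p\}$. The main obstacle is surjectivity in (3): both the identification $\Fra(R)=L^{\tg_x}$ and the normality of $R$ rely on the polynomiality of $\K[\tg_x^\ast]^{\tG_x}$ and on $\ind(\tg_x)=l$, features that are special to centralisers in types $A$ and $C$. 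In the solvable three-dimensional example of the introduction, $\K[\g^\ast]^G=\K$ is too small to realise $L^\g$, so the extra invariant $a^{p-1}b$ cannot be accounted for by $\K[\g^\ast]^p$ and $\K[\g^\ast]^G$.
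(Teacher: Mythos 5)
Your overall architecture (prove polynomiality, set up the multiplication map, deduce freeness) is reasonable, and your field-theoretic degree count via $\ind(\tg_x)=l$ is correct. However, the proof has a genuine gap precisely at the step you flag as the main obstacle, namely the normality of $R = \K[\tg_x^\ast]^p[\bar F_1,\dots,\bar F_l]$. You present $R$ as $\K[\tg_x^\ast]^p[T_1,\dots,T_l]/(T_i^p - \bar F_i^p)$ and appeal to a Jacobian criterion for $R_1$, but the Jacobian matrix of the relations $T_i^p - \bar F_i^p$ with respect to the variables $T_j$ is $\bigl(pT_i^{p-1}\delta_{ij}\bigr) = 0$ identically in characteristic $p$. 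So Serre's criterion cannot be verified from this presentation, and the appeal to ``flatness properties of the PPY quotient map'' does not fix this, since that quotient map lives over $\C$. The object that actually controls the argument is the Jacobian of $\bar F_1,\dots,\bar F_l$ regarded as functions on $\tg_x^\ast$ (i.e.\ the differentials $d_\gamma \bar F_i$), and what one needs is that the locus in $\tg_x^\ast$ where these drop rank has codimension at least two. This is the hypothesis of Skryabin's theorem \cite[Theorem~5.4]{Skr}, which is exactly the engine the paper uses: given $m=\ind(\tg_x,\tg_x^\ast)+\dim\tg_x^\ast-\dim\tg_x$ invariants with Jacobian locus of codimension $\geq 2$, Skryabin's theorem outputs directly that $\K[\tg_x^\ast]^{\tg_x}=\K[\tg_x^\ast]^p[\bar F_1,\dots,\bar F_m]$ is free of rank $p^m$. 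Establishing that codimension bound over $\K$ is where the real work lies, and the paper devotes Sections~\ref{elemInv}--\ref{GenEl} to it: it reduces modulo $p$ the explicit Brown--Brundan/PPY generators $x_r$, shows they remain invariant (Corollary~\ref{invariants}), then produces, after reducing to nilpotent $e$, a specific two-dimensional plane through $0$ in $\tg_e^\ast$ (built from the forms $\alpha,\beta$) meeting the Jacobian locus only at the origin (Propositions~\ref{Codimension 2} and~\ref{jaclocsymorth}), together with a generic-stabiliser computation to count invariants correctly (Lemma~\ref{noinv}).

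Two further points. First, your step (2) asserts that the Jacobian of the $\bar F_i$ ``is non-degenerate in characteristic zero and the construction is $\Z_{(p)}$-integral, so non-degeneracy persists after reduction,'' but this does not follow: a nonzero integral polynomial can vanish identically mod $p$, so one must exhibit an explicit point with $\Z_{(p)}$-coefficients where the Jacobian determinant is a $p$-adic unit (the paper's Lemma~\ref{beta}, in the style of Brown--Brundan, does this). Your appeal to a ``Hilbert-series comparison traced from characteristic zero'' to identify $\K[\bar F_1,\dots,\bar F_l]$ with all of $\K[\tg_x^\ast]^{\tG_x}$ is likewise unjustified, since invariant rings can grow under specialisation to prime characteristic. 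Second, the logical order differs from the paper's: the paper obtains part (1) first from Skryabin's theorem, and then part (2) by an elementary induction on polynomial degree using $(\K[\tg_x^\ast]^p)^{\tG_x}=(\K[\tg_x^\ast]^{\tG_x})^p$; deducing (2) independently first, as you propose, is not supported by what you write.
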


It is easily seen that the above theorem has a straightforward reduction to the nilpotent case, for if $x = x_s + x_n$ is the Jordan decomposition of $x \in \tg$ then $\tg_x = (\tg_{x_s})_{x_n}$. But $\tg_{x_s}$ is a direct sum of the centre and of simple groups of types $A$ or $C$, whilst the reductive rank of $\tg$ is equal to that of $\tg_{x_s}$. An inductive argument may then be used. 

It will be convenient to discuss Lie algebras in some generality, and so we let $\tg$ be an arbitrary Lie algebra over $\K$ and let $W$ be a $\tg$-module. The \emph{index of $\tg$ in $W$} is defined as
$$\ind(\tg, W) = \min_{\alpha\in W^\ast} \dim \tg_\alpha.$$
Note that the minimum is taken over elements of the dual space. The definition is due to Dixmier, who introduced it due to its importance in representation theory. It has subsequently been studied in many, many articles (eg. \cite{TY}, \cite{Yak1}, \cite{CMM}, \cite{Pan} to mention just a few). The most important instance of index is that of a Lie algebra in itself: we denote $\ind(\tg) := \ind(\tg, \tg)$ and call this quite simply the \emph{index of $\tg$}. A well known conjecture in Lie theory, now a theorem, is the Elashvili conjecture which states that for a reductive Lie algebra $\tg$ we have $\ind(\tg) = \ind(\tg_x)$ for all $x \in \tg^\ast$. The classical cases were settled by Yakimova in \cite{Yak1} whilst an almost general proof was found by Moreau and Charbonnel \cite{CMM} (the remaining seven cases being settled manually). This theorem was essential to \cite{PPY}, and so we too are indebted to this result. 

The proof in type $A$ and $C$ of Premet's conjecture in \cite{PPY} reduced to the nilpotent case and makes use of a deep understanding of Poisson structure on the coordinate algebra of a Slodowy slice $\C[e + \tg_f]$. They constructed invariants for the centraliser using well known invariants for the reductive group, and made a precise conjecture describing these invariants \cite[Conjecture~4.1]{PPY}. This was later confirmed in \cite[Theorem~15]{Yak2}. The proof in type $A$ in \cite{BB} was very different, and made use of a realisation of finite $W$-algebras as shifted Yangians. They then identified the invariants as the top graded components of the ``quantum determinants" \cite[$\S 2$]{BB}. These two descriptions of invariants actually coincide, thanks to \cite[$\S 6$]{Yak2}. 

As Theorem~\ref{main1} reduces to the nilpotent case we now choose $e \in \g$ nilpotent. Our approach is quite different from the two above and takes, as a starting point, the explicit presentations of the type $A$ characteristic 0 invariants discussed in the previous paragraph (we shall recall them in formula~(\ref{xrreform}) of Section~\ref{elemInv}). We then show that their reductions modulo $p$, which we denote by $x_1,x_2,...,x_N \in S(\g_e)$, are invariant under $G_e$ (Corollary~\ref{invariants}). Our method is to demonstrate that the $x_r$ satisfy a theorem of S. Skryabin \cite[Theorem~5.4]{Skr}, which tells us that Theorem~\ref{main1} will follow provided the Jacobian locus of the $x_1,...,x_N$ has codimension 2 in $\g_e^\ast$. This is demonstrated in Section~\ref{JacLoc1} using a scheme of argument not too dissimilar to \cite[$\S 3$]{PPY}.

Our proof in type $C$ is quite similar. In this case the invariants we consider are the restrictions $\{x_{2r}|_{\h_e^\ast}: r=1,...,\frac{N}{2}\}$ (the $x_i$ with $i$ odd restrict to zero on $\h_e^\ast$ by Corollary~\ref{vanishing1} which may be seen as a generalisation of the well known behaviour in case $e = 0$). Once again we must show that the invariants have a Jacobian locus of codimension 2. This property is actually inherited from the type $A$ case using an elementary yet original line of reasoning (Section~\ref{JacLoc2}). We shall state Skryabin's theorem in its full generality in Section~\ref{Proofs}.

As mentioned previously, our argument in type $C$ does not extend to types $B$ and $D$. The reason for this is quite simply that the restrictions $\{x_{2i}|_{\h_e^\ast}: i=1,...,\frac{N}{2}\}$ often fail to be algebraically independent. On the other hand, this method may be applied to some advantage when examining the invariants $\K[\pp_e^\ast]^{\h_e}$ in orthogonal cases:
\begin{thm}\label{main3}
Let $\epsilon = 1$ so that $K$ is of type $B$ or $D$. If $e \in h$ is nilpotent, with associated partition $\lambda$ and $m := (N + |\{ i : \lambda_i \text{ odd}\}|)/2$. Then
\begin{enumerate}
\item{$\K[\pp_e^\ast]^{\h_e}$ is free of rank $p^{m}$ over $\K[\pp_e^\ast]^p$;}
\smallskip
\item{$\K[\pp_e^\ast]^{K_e}$ is a polynomial algebra on $m$ generators;}
\smallskip
\item{$\K[\pp_e^\ast]^{\h_e} \cong \K[\pp_e^\ast]^p \otimes_{(\K[\pp_e^\ast]^p)^{K_e}} \K[\pp_e^\ast]^{K_e}$.}
\end{enumerate}
\end{thm}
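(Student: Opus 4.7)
The plan is to mimic the proofs of Theorem~\ref{main1} and apply Skryabin's theorem \cite[Theorem~5.4]{Skr}, this time to the $\h_e$-module $\pp_e$. It suffices to exhibit $m$ algebraically independent elements of $\K[\pp_e^\ast]^{K_e}$ whose common Jacobian locus has codimension at least $2$ in $\pp_e^\ast$; all three assertions of the theorem then follow at once from the conclusion of Skryabin's result.

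The invariants should again be obtained from the explicit type $A$ generators $x_1,\ldots,x_N \in S(\g_e)$ recalled in Section~\ref{elemInv}, now restricted to $\pp_e^\ast$ rather than to $\h_e^\ast$. Because $\sigma$ acts by $+1$ on $\h_e$ and $-1$ on $\pp_e$, and because $\sigma(X) = -J^{-1}X^\top J$ transforms each $x_i$ by an explicit sign depending on the parity of $i$ and on the parts of $\lambda$, a calculation parallel to Corollary~\ref{vanishing1} (but now with $\epsilon = +1$) should identify a distinguished subfamily of the restrictions $x_i|_{\pp_e^\ast}$ that is $K_e$-invariant, whose cardinality matches the combinatorial count $m = (N + |\{i : \lambda_i \text{ odd}\}|)/2$. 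Algebraic independence of this subfamily should follow from an index computation for the pair $(\h_e, \pp_e)$ combined with a degree comparison, in direct analogy with the type $A$ case.

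The main obstacle, and the reason the orthogonal case is genuinely harder than types $A$ and $C$ already treated, is the codimension-$2$ estimate for the Jacobian locus. As the author already observes, the naive analogue of the type $C$ argument breaks down because the $x_{2i}|_{\h_e^\ast}$ often fail to be algebraically independent; the whole purpose of passing to $\pp_e^\ast$ is to recover independence. My strategy is to transfer the codimension bound from $\g_e^\ast$, established in Section~\ref{JacLoc1}, to $\pp_e^\ast$ by the same kind of comparison used in Section~\ref{JacLoc2}. The restriction map $S(\g_e) \to S(\pp_e)$ intertwines the partial derivatives in the $\pp_e$-directions with the ordinary partials on $\g_e^\ast$, so in a $\sigma$-adapted basis the Jacobian matrix of the restricted family at any point of $\pp_e^\ast$ is a block of the Jacobian matrix of $x_1,\ldots,x_N$ at that point. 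One then shows that the bad locus on $\pp_e^\ast$ is contained in the intersection of the bad locus on $\g_e^\ast$ with $\pp_e^\ast$, together with a small exceptional piece controlled by the odd parts of $\lambda$; verifying that the total codimension remains at least $2$ is the delicate combinatorial step.

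Once the Jacobian locus condition is established, Skryabin's theorem immediately yields freeness of $\K[\pp_e^\ast]^{\h_e}$ over $\K[\pp_e^\ast]^p$ of rank $p^m$, polynomiality of $\K[\pp_e^\ast]^{K_e}$ on $m$ generators, and the tensor product decomposition (3), completing the proof.
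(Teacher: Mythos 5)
Your overall plan coincides with the paper's: use the restrictions $x_r|_{\pp_e^\ast}$ for $r+d_r$ even as the $K_e$-invariants, verify the hypotheses of Skryabin's theorem for the $\h_e$-module $\pp_e^\ast$, and transfer the Jacobian codimension bound from the general linear case. However, two of the steps you flag as ``delicate'' are in fact left unresolved in a way that matters.

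First, the codimension bound does \emph{not} follow by simply intersecting the bad locus of $\g_e^\ast$ with $\pp_e^\ast$: a subset of codimension $\geq 2$ in $\g_e^\ast$ can meet a subspace in codimension $0$ or $1$. What makes the argument work is that the paper constructs an explicit $2$-dimensional plane $\K\alpha \oplus \K\bar\beta$ lying inside $\pp_e^\ast$ (with $\alpha = \sum a_i (\xi_i^{i,\lambda_i-1})^\ast$, $a_i = -a_{i'}$ for $i \neq i'$, and $\bar\beta = \beta - \beta'$) which meets $J = J_{\g_e^\ast}(x_1,\ldots,x_N)$ only at $0$; this is Lemmas~\ref{albeos} and the $\pp_e^\ast$ analogue of Lemma~\ref{plane2}, and it rests in turn on the $\rho(\K^\times)$-stability of $J$ (Lemma~\ref{Jstable}) and on the known open locus near $\beta$ (Lemma~\ref{beta}). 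Your proposal never produces such a plane, and the ``small exceptional piece controlled by the odd parts of $\lambda$'' you conjecture does not exist: the correct statement (Lemma~\ref{jal2}, proved by the parity argument of Lemma~\ref{dees}) is a clean equality $J_{\pp_e^\ast}(x_r : r+d_r \text{ even}) = \pp_e^\ast \cap J_{\g_e^\ast}(x_r : r+d_r \text{ even})$, because each monomial of $x_r - \iota(x_r|_{\pp_e^\ast})$ has at least two factors from a basis of $\h_e$, so its gradient vanishes on $\pp_e^\ast$.

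Second, to invoke Skryabin's theorem at all you must know that the number of invariants equals $m(\h_e,\pp_e^\ast) = \dim\pp_e^\ast - \dim\h_e + \ind(\h_e,\pp_e^\ast)$, and the index $\ind(\h_e,\pp_e^\ast)$ is not available in the literature; your proposal merely says it ``should follow from an index computation.'' The paper must actually prove the existence of a generic stabiliser for the $K_e$-action on $\pp_e^\ast$ (Theorem~\ref{genstabal}(3), via Proposition~\ref{alphaKegeneric2}), showing that the map $K_e \times (\hh^\ast \cap \pp_e^\ast) \to \pp_e^\ast$ is dominant, and then computes the index as $\dim(\hh\cap\h_e) = \frac{1}{2}(N - |\{i : \lambda_i \text{ odd}\}|)$ in Corollary~\ref{orthindex}. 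The combinatorial match with $|\{r : r+d_r \text{ even}\}|$ (Lemma~\ref{noinv}(3)) also requires a careful parity count using $j' \in \{j-1,j,j+1\}$. Until both the plane construction and the index calculation are supplied, the proof is not complete.
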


We shall briefly discuss the method. Denote by $d_r$ the degree of $x_r$. The degrees $d_r$ have a very combinatorial description given in formula~(\ref{degreeinst}) of the next section. The invariants we consider are $\{x_{r}|_{\pp_e^\ast} : r + d_r \text{ even}\}$; the other invariants restrict to zero by Corollary~\ref{vanishing1}. Once again the codimension 2 condition on the Jacobian locus is inherited from the type $A$ case. Since that part of the argument is so similar to the type $C$ case we simply make a brief list of necessary changes to the proof. For Theorem~\ref{main1}, we have the correct number of invariants to apply Skryabin's theorem thanks to the confirmation of the Elashvili conjecture. For Theorem~\ref{main3}, however, the index $\ind(\h_e, \pp_e)$ is not known and we make a detour in Section~\ref{GenEl} to calculate this index by exhibiting the existence of a generic stabaliser. Some other results are obtained regarding indexes and generic stabalisers. The $e = 0$ case of the above theorem was proved in \cite[Theorem~0.22]{Lev} using quite similar methods.

Our remaining results concern the centres of enveloping algebras. Resume the setting of Theorem~\ref{main1} so that $\tG$ is a simple algebraic group of rank $l$ of type $A$ or $C$, and $x \in \tg$. Then $\tg_x$ is a $p$-Lie algebra (in the sense of \cite{Jac}) with $p$-operation $v \ra v^{[p]}$ induced by matrix multiplication. We let $U(\tg_x)$ denote the enveloping algebra of $\tg_x$. Our next theorem offers a description $U(\tg_x)^{\tG_x}$ and $U(\tg_x)^{\tg_x}$ analogous to the one given in Theorem~\ref{main1}. Let $Z(\tg_x)$ denote the centre of $U(\tg_x)$ and $Z_p(\tg_x)$ the $p$-centre: the central subalgebra of $U(\tg_x)$ generated by expressions of the form $v^p - v^{[p]}$ with $v \in \tg_x$. It is evident that $U(\tg_x)^{\tg_x} = Z(\tg_x)$.

We consider the canonical filtration $$U(\tg_x) = \bigcup_{i\geq 0} U(\tg_x)_{(i)}$$ where $U(\tg_x)_{(i)} = \K$. By the Poincare-Birkoff-Witt theorem the associated graded algebra is $\gr U(\tg_x) \cong S(\tg_x)$ and the corresponding grading coincides with the usual one on $S(\tg_x) = \K[\tg_x^\ast]$. We let $S(\tg_x)_i$ denote the vector subspace of $S(\tg_x)$ of homogeneous elements of degree $i$. Since each graded $\tG_x$-algebra is trivially a filtered $\tG_x$-algebra both $U(\tg_x)$ and $S(\tg_x)$ can be seen as filtered $\tG_x$-modules. A key ingredient in the proof of our next theorem is the existence of a filtration preserving isomorphism of $\tG_x$-modules $$\beta : U(\tg_x) \overset{\sim}\longrightarrow S(\tg_x)$$ such that the associated graded map $\gr (\beta)$ is the identity on $S(\tg_x)$. The isomorphism is known to exist for a large class of algebras \cite[$\S 3$]{Pre} and we shall describe it explicitly in Section~\ref{milnersec}. It is worth mentioning that such an isomorphism exists if and only if the natural inclusion of a Lie algebra into its enveloping algebra splits relative to the adjoint action \cite[Theorem~1.2]{FP}, although this is not always the case as shown by the example due to Wilson, presented following Theorem~1.4 of \cite{FP}. This sits in stark contrast to the characteristic zero case, when the symmetric and enveloping invariant subspaces are actually isomorphic as algebras.

We obtain generators for $Z(\tg_x)$ as the inverse images under $\beta$ of the aforementioned generators for $S(\tg_x)^{\tG_x}$, and then use a standard filtration argument to prove the next theorem. We remind the reader that when $\tG$ is of type $C$ we make the assumption $\chr(\K) > 2$.
\begin{thm}\label{main2}
\begin{enumerate}  Suppose that $\epsilon = -1$ so that $K$ is of type $C$. If $\tG \in \{G, K\}$ is of rank $l$ and $x \in \tg = \Lie(\tG)$ then
\item{$Z(\tg_x)$ is free of rank $p^l$ over $Z_p(\tg_x)$;}
\smallskip
\item{$U(\tg_x)^{\tG_x}$ is a polynomial algebra on $l$ generators;}
\smallskip
\item{$Z(\tg_x) \cong Z_p(\tg_x) \otimes_{(Z_p(\tg_x))^{\tG_x}} U(\tg_x)^{\tG_x}$.}
\end{enumerate}
\end{thm}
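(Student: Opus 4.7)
The plan is to transport everything through the symmetrisation $\beta$ and then run a standard filtered-to-graded argument, using Theorem~\ref{main1} at each stage.

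First I would prove (2). Let $y_1,\ldots,y_l$ be polynomial generators of $\K[\tg_x^\ast]^{\tG_x}$ supplied by Theorem~\ref{main1}(2), chosen homogeneous of degrees $d_1,\ldots,d_l$, and set $Y_i := \beta^{-1}(y_i) \in U(\tg_x)_{(d_i)}$. Since $\beta$ is $\tG_x$-equivariant, each $Y_i$ lies in $U(\tg_x)^{\tG_x}$. The hypothesis $\gr(\beta) = \mathrm{id}$ forces the principal symbol of $Y_i$ to be exactly $y_i$, so the $Y_i$ are algebraically independent in $U(\tg_x)$. For surjectivity onto $U(\tg_x)^{\tG_x}$ I would argue by induction on filtration degree: given $u \in U(\tg_x)^{\tG_x} \cap U(\tg_x)_{(n)}$, its symbol $\gr(u) \in S(\tg_x)^{\tG_x}$ can be written as $P(y_1,\ldots,y_l)$, whence $u - P(Y_1,\ldots,Y_l) \in U(\tg_x)^{\tG_x} \cap U(\tg_x)_{(n-1)}$ and induction applies. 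This shows $U(\tg_x)^{\tG_x} = \K[Y_1,\ldots,Y_l]$, establishing (2), and simultaneously identifies $\gr U(\tg_x)^{\tG_x}$ with $S(\tg_x)^{\tG_x}$.

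Next I would compute the symbol of the $p$-centre. For any $v \in \tg_x$, the expression $v^p - v^{[p]}$ lies in $U(\tg_x)_{(p)}$ and has principal symbol $v^p \in S(\tg_x)_p$, so $\gr Z_p(\tg_x) = \K[\tg_x^\ast]^p$. Combining this with the previous paragraph and Theorem~\ref{main1}(3), every element of $\K[\tg_x^\ast]^{\tg_x}$ is a polynomial in the $y_i$ and in $p^{\th}$ powers, hence is the symbol of a product of $Y_i$'s and elements of $Z_p(\tg_x)$. Since trivially $\gr Z(\tg_x) \subseteq \K[\tg_x^\ast]^{\tg_x}$ (a central element's symbol Poisson-commutes with every $v \in \tg_x$), we conclude $\gr Z(\tg_x) = \K[\tg_x^\ast]^{\tg_x}$ and that $Z(\tg_x)$ is generated as an algebra by $Z_p(\tg_x)$ together with $U(\tg_x)^{\tG_x}$.

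Parts (1) and (3) now follow by a routine filtration comparison with Theorem~\ref{main1}. For (1), pick a $(\K[\tg_x^\ast]^p)^{\tG_x}$-basis of $\K[\tg_x^\ast]^{\tG_x}$ of size $p^l$ using Theorem~\ref{main1}(1); lift it to a family in $U(\tg_x)^{\tG_x}$ via $\beta$, and check (using the fact that the symbols form a free basis) that the lifts form a $Z_p(\tg_x)$-basis of $Z(\tg_x)$ by induction on filtration degree. For (3), the natural multiplication map
\[
\mu : Z_p(\tg_x) \otimes_{(Z_p(\tg_x))^{\tG_x}} U(\tg_x)^{\tG_x} \longrightarrow Z(\tg_x)
\]
is filtered, and its associated graded map is exactly the isomorphism of Theorem~\ref{main1}(3) after the identifications already made, so $\mu$ itself is an isomorphism. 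The main obstacle I anticipate is purely bookkeeping: ensuring that the filtration on the tensor product on the left of (3) is the one whose associated graded recovers the commutative tensor product, but this is the standard setup once one knows that $(Z_p(\tg_x))^{\tG_x}$ has $\gr = (\K[\tg_x^\ast]^p)^{\tG_x}$, which again follows from the $\beta$-equivariance argument applied to the $p^{\th}$ power subalgebra.
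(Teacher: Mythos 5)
Your proposal is correct and follows essentially the same route as the paper: both transport Theorem~\ref{main1} through the symmetrisation $\beta$ of Theorem~\ref{milmap}, using $\gr(\beta)=\mathrm{id}$ to identify symbols and then running a filtered-to-graded comparison to deduce the three statements for $U(\tg_x)$ from their symmetric-algebra counterparts. The paper phrases the argument in terms of the sets $\mathcal{I}$ and $\tcx$ and a chain of inclusions of associated graded algebras rather than lifting an explicit free basis, but the underlying mechanism is identical.
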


Our final results are representation theoretic in nature. In his seminal paper \cite{Zas} Zassenhaus observed that there is an upper bound on the dimensions of simple modules for any Lie algebra $\tg$ defined over a field of positive characteristic. We denote that upper bound by $M(\tg)$. A very coarse estimate is given by $M(\tg) \leq p^{\dim(\tg)}$ (see \cite[A.4]{Jan.} for example). Let $F(\tg)$ denote the full field of fractions of $Z(\tg)$ and let $F_p(\tg)$ denote that of $Z_p(\tg)$. Clearly $F(\tg)$ is a finite field extension of $F_p(\tg)$ and the degree (dimension of $F(\tg)$ over $F_p(\tg)$) is a power of $p$. Let us denote the degree by $[F(\tg) : F_p(\tg)] = p^l$. One of the many interesting consequences of Zassenhaus' aforementioned paper is that $M(\tg) = p^{\frac{1}{2}(\dim(\tg) - l)}$ \cite[A.5]{Jan.}. Kac and Weisfeiler made a series of conjectures about the dimensions of simple modules. The first of these (KW1) states that $$M(\tg) = p^{\frac{1}{2}(\dim(\tg) - \ind(\tg))}.$$ In light of Zassenhaus' contribution it suffices to show that $[F(\tg) : F_p(\tg)] = p^{\ind(\tg)}$ in order to prove the conjecture for $\tg$. This is precisely our approach in proving our fourth main theorem.
\begin{thm}\label{KW1}
If $\tG$ is of type $A$ or $C$ and $x \in \tg$ then the KW1 conjecture holds for $\tg_x$.
\end{thm}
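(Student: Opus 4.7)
The plan follows the reduction indicated just above the theorem statement: by Zassenhaus' formula $M(\tg_x) = p^{(\dim(\tg_x) - l')/2}$, where $p^{l'} = [F(\tg_x) : F_p(\tg_x)]$. It therefore suffices to prove that $l' = \ind(\tg_x)$.

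First I would determine $l'$ using Theorem \ref{main2}(1), which asserts that $Z(\tg_x)$ is free of rank $p^l$ over $Z_p(\tg_x)$, where $l = \rank(\tG)$. Since $U(\tg_x)$ is a domain by the PBW theorem, its centre $Z(\tg_x)$ is a domain, as is $Z_p(\tg_x)$ (which is even a polynomial ring). Localising the free extension at the non-zero elements of $Z_p(\tg_x)$ yields an $F_p(\tg_x)$-algebra which is a domain of $F_p(\tg_x)$-dimension $p^l$, hence a field, and it must coincide with $F(\tg_x)$. This gives $l' = l$.

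Next I would identify $l$ with $\ind(\tg_x)$. Since $\tg$ is reductive we have $\ind(\tg) = \rank(\tg) = l$, and the existence of a non-degenerate invariant symmetric form (available in good characteristic) allows us to transport elements between $\tg$ and $\tg^\ast$ without altering the dimensions of their stabilisers. The Elashvili conjecture---established by Yakimova \cite{Yak1} in the classical types and by Moreau and Charbonnel \cite{CMM} in general---then yields $\ind(\tg_x) = \ind(\tg) = l$. Combining both steps produces $M(\tg_x) = p^{(\dim(\tg_x) - \ind(\tg_x))/2}$, which is precisely the KW1 formula.

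Essentially all of the substantive work is already contained in Theorem \ref{main2} and in the cited resolution of the Elashvili conjecture, so I do not anticipate any genuine obstacle at this stage: the only remaining verification is the routine commutative-algebra observation that a finite free extension of integral domains induces a fraction-field extension of the same degree.
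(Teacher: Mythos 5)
Your proposal is correct and follows essentially the same route as the paper's (first) proof: both reduce via Zassenhaus to computing $[F(\tg_x):F_p(\tg_x)]$, both extract the degree $p^l$ from the freeness in Theorem~\ref{main2}(1) via the standard observation that a module-finite extension of domains induces a field extension of the same degree, and both invoke the Elashvili conjecture (\cite{Yak1}, \cite{CMM}) to identify $l=\rank(\tg)=\ind(\tg)$ with $\ind(\tg_x)$. The only cosmetic difference is that the paper phrases the fraction-field step as writing each element of $F$ as $f/g$ with $g\in Z_p(\tg_x)$, whereas you argue directly that the localisation is a finite-dimensional domain over $F_p(\tg_x)$ and hence a field; these are interchangeable.
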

The conjecture stated in its full generality is quite unnerving, and a full proof is a long way off. That being said, there are no known counterexamples. The Jacobson-Witt algebras form an interesting class to discuss here. The algebra $W(n; \bf{1})$ is defined as the derivation algebra of the truncated polynomial algebra $\K[t_1, ..., t_n]/(t_1^p, ...,t_n^p)$. It is simple of rank $n$ and non-algebraic. The conjecture is settled in the affirmative for $W(1; \bf{1})$ and $W(2; \bf{1})$ \cite{PS} using a powerful theorem stating that KW1 holds for $\tg$ provided there exists $v \in \tg^\ast$ such that $\tg_v$ is a torus. In higher ranks very little is known, and as a result $W(3; \bf{1})$ is believed to be a solid proving ground for the conjecture \cite[$\S 4.1$]{PS}.

In finite characteristic $Z(\tg)$ is a finitely generated, commutative integral domain. Therefore we may consider the algebraic variety $\Zas(\tg) := \Specm(Z(\tg))$, known as the Zassenhaus variety \cite{Zas}, \cite{PT}. The geometry of $\Zas(\tg)$ is of some importance and controls certain algebraic features of $\tg$. Theorem~\ref{KW1} allows us to make some fine deductions on the singular locus of $\Zas(\tg_x)$ when $\tG$ is of type $A$ or $C$ and $x \in \tg$.  We shall reserve the full discussion for the final section.

\renewcommand{\abstractname}{Acknowledgements}
\begin{abstract}
I would like to thank Alexander Premet for his excellent guidance throughout this research. I would also like to thank Oksana Yakimova and Anne Moreau for taking the time to read early drafts of this article and for their many helpful suggestions.
\end{abstract}

\section{The Elementary Invariants}\label{elemInv}

Retain the notations and conventions of the introduction so that $G = GL(V)$, etc. Throughout this section we fix nilpotent $e \in \g$. Then $V$ decomposes into minimal $e$-stable subspaces: $V = \oplus_{i=1}^n V[i]$. The spaces $V[i]$ are called the Jordan block spaces of $e$, and $e$ acts as a regular nilpotent endomorphism on each of them. Define $\lambda_i := \dim V[i]$. We may assume $\lambda_1 \geq \cdots \geq \lambda_n$, so that $\lambda := (\lambda_1,...,\lambda_n)$ is an ordered partition of $N$. The nilpotent $G$-orbits are classified by their partitions in this way. For $i = 1,...,n$ we may choose vectors $w_i \in V[i] \backslash \im(e)$ so that $\{e^s w_i\}_{s=0}^{\lambda_i - 1}$ is a basis for the Jordan block space $V[i]$ and $\{e^s w_i : 1 \leq i \leq n, 0 \leq s < \lambda_i\}$ is a basis for $V$.

Let $\xi \in \mathfrak{g}_e$. Then $\xi (e^s w_i) = e^s (\xi w_i)$ showing that $\xi$ is determined by its action on the $w_i$. If we define
\begin{eqnarray*}
\xi_i^{j,s} w_k = \left\{ \begin{array}{ll}
         e^sw_j & \mbox{ if $i=k$}\\
        0 & \mbox{ otherwise}\end{array} \right.
\end{eqnarray*}
and extend the action to $\{e^s w_i\}$ by the requirement that $\xi_i^{j,s}$ is linear and centralises $e$ then
\begin{eqnarray*}
\{\xi_i^{j,\lambda_j - 1- s} : 1 \leq i,j \leq n , 0  \leq s < \min(\lambda_i, \lambda_j)\}
\end{eqnarray*}
forms a basis for $\mathfrak{g}_e$. By convention, if either $i$, $j$ or $s$ lie outside the specified range then we take $\xi_i^{j,\lambda_j-1-s} = 0$. Let $\mathfrak{g}_e^\ast$ denote the dual space of $\mathfrak{g}_e$. We define a dual basis $\{(\xi_i^{j,s})^\ast\}$ for $\mathfrak{g}_e^\ast$ by setting
\begin{eqnarray*}
(\xi_i^{j,s})^\ast(\xi_k^{l,r}) = \left\{ \begin{array}{ll}
         1 & \mbox{ if $i=k$, $j=l$, $s=r$}\\
        0 & \mbox{ otherwise}\end{array} \right.
\end{eqnarray*}

As $\lambda = (\lambda_1,...,\lambda_n)$ is a partition of $N$ we may define a \emph{composition of} $\lambda$ to be a finite sequence $\mu = (\mu_1,...,\mu_n)$ with $0 \leq \mu_i \leq \lambda_i$ for $i = 1,...,n$. We write $|\mu| = \sum \mu_i$ and let $l(\mu)$ denote the number of $i$ for which $\mu_i$ is nonzero. If $1 \leq j \leq l(\mu)$ then $i_j^{\mu}$ denotes the $j^{\text{th}}$ index such that $\mu_{i_j} \neq 0$ (ordered so that $i_1^{\mu} \leq i_2^{\mu} \leq i_3^\mu \leq \cdots$). In our case, a choice of $\mu$ shall always be clear from the context, and we will usually write $i_j = i_j^{\mu}$.

With a view to introducing the elementary symmetric invariants described in \cite{PPY} and \cite{BB} we define the sequence of invariant degrees by
\begin{eqnarray}\label{degreeinst}
(d_1, ..., d_N) = (\overbrace{1,1,...,1}^{\lambda_1 \text{ 1's}},\overbrace{2,...,2}^{\lambda_2 \text{ 2's}},...,\overbrace{n,...,n}^{\lambda_n \text{ n's}})
\end{eqnarray}
Fix $r \in \{1,...,N\}$ and set $d = d_r$. Denote by $\mathcal{C_\lambda}$ the set of compositions of $\lambda$ fulfilling $|\mu| = r, l(\mu) = d$ and let $\gS_d$ denote the symmetric group on $d$ letters. If a choice of $w \in \gS_d$ and a composition $\mu \in \mathcal{C}_\lambda$ is implicit and $1\leq j \leq l(\mu)$ then we write
\begin{eqnarray*}
s_j = s_j(w,\mu) := \lambda_{i_{wj}^{\mu}} - \lambda_{i_j^{\mu}} + \mu_{i_j^{\mu}} -1
\end{eqnarray*}
Thanks to the identification of $S(\g_e)$ and $\K[\g_e^\ast]$, the latter is spanned by monomials in the linear unary forms $\xi_i^{j,s} : \g_e^\ast \ra \K$ and we define
\begin{eqnarray*}
&\Theta_r : \gS_{d_r} \times \mathcal{C}_\lambda \ra \K[\g_e^\ast];\\
&\Theta_r (w, \mu) = \sgn(w)\xi_{i_1}^{i_{w1}, s_1} \cdots \xi_{i_d}^{i_{wd},s_d}.
\end{eqnarray*}
Now the our elementary invariants may be defined
\begin{eqnarray}\label{xrreform}
x_r = \sum_{(w,\mu) \in \gS_d\times \mathcal{C}_\lambda} \Theta_r(w,\mu) \in \K[\g_e^\ast]. 
\end{eqnarray}
Each $x_r$ is homogeneous of degree $d_r$.

\begin{rem}
\rm{Let's review a brief history of these polynomials. The definition of $x_r$ first appeared in \cite[Conjecture~4.1]{PPY} over $\C$, where it was conjectured that this would be the explicit formula for the invariants being studied there, which were denoted ${}^eF_1,...,{}^eF_N$. The same invariants were derived using different methods in \cite{BB}, defined over a different basis, but presented in our notation $x_1,...,x_N$. In \cite[$\S 6$]{Yak2} these bases were shown to coincide so \cite[Conjecture~4.1]{PPY} is equivalent to ${}^eF_i = x_i$, and in \cite[Theorem~15]{Yak2} the conjecture was confirmed.
}\end{rem}
\begin{rem}
\rm{It is instructive to unfold the definition of $x_r$ for certain nilpotent orbits $e$. First we shall take the case $e = 0$, so that $\lambda = (1,1,...,1)$ and $\g_e = \g$. Fix $r = 1,...,N$. Then $\mathcal{C}_\lambda$ is the set of subsets of size $r$ of $\{1,...,N\}$. The basis vectors $\xi_i^{j,0}$ are now just the elementary matrix units. It follows that $x_r$ is dual to the $r^\th$ coefficient of the characteristic polynomial, ie. the classical Harish-Chandra invariants; see \cite[Theorem~1.4.1]{Smi} for a construction via elementary methods. 

Now consider the case $e$ regular, with partition $(N)$. It is well known that $\g_e$ is abelian. More precisely \cite[Theorem~4]{Yak2} tells us that $\g_e$ is spanned by matrix powers of $e$. Fix $r=1,...,N$, observe that $d_r = 1$ and that $\mathcal{C}_\lambda$ contains a single element $(r)$. Furthermore $\Sf_d$ is trivial. Hence $x_r = \xi_1^{1, r-1} = e^{r-1}$ and the invariants are a basis for $\g_e$, as is to be expected.

For the orbits lying between these two extremal cases, the invariants are impossible to describe in a uniform manner, although we may think of them as a generalisation of the above two examples.}
\end{rem}
Note that all groups, spaces and maps discussed in the current section may equally well be defined over $\C$. The following result is a consequence of \cite[Theorem~4.2]{PPY}, or indeed of \cite[Theorem~4.1]{BB}.
\begin{thm}\label{BrB}
The polynomials $x_1,...,x_N$ are $G_e$-invariant when defined over $\C$.
\end{thm}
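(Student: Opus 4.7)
The statement is essentially a translation between the explicit combinatorial definition of $x_r$ given in formula~(\ref{xrreform}) and the invariants constructed by other means in the two cited papers, so the plan is to argue that there is nothing new to prove: once one matches the definitions, $G_e$-invariance is an immediate consequence of the quoted results.

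The most direct route is through Brown--Brundan. I would first recall that in \cite{BB} the authors realise the finite $W$-algebra attached to a nilpotent $e \in \g = \gl_N(\C)$ as a quotient of a shifted Yangian, and extract invariants from the top-degree components of the ``quantum determinants" expanded in a distinguished PBW basis of $\g_e$. Their construction produces explicit elements, denoted $x_1, \ldots, x_N$ there, lying in $S(\g_e) = \C[\g_e^\ast]$ and proven in \cite[Theorem~4.1]{BB} to be $G_e$-invariant. The first step of the proof is simply to check that the basis vectors $\xi_i^{j,s}$ used in \cite{BB} coincide (up to conventions) with those fixed in the current section, and that the combinatorial expansion~(\ref{xrreform}) of $x_r$ matches the expression extracted in \cite{BB} term by term. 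This verification is bookkeeping rather than mathematics.

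Alternatively, I would reach the same conclusion through the Premet--Panyushev--Yakimova route: in \cite{PPY} invariants ${}^eF_1, \ldots, {}^eF_N$ are produced by a symmetrisation argument applied to the standard generators of $S(\g)^G$ transported onto a Slodowy slice, and \cite[Conjecture~4.1]{PPY} predicts that, when expressed in the basis $\{\xi_i^{j,s}\}$, they are given by precisely the formula~(\ref{xrreform}). This conjecture was proven in \cite[Theorem~15]{Yak2}, and \cite[\S 6]{Yak2} identifies the PPY basis with the BB basis so that the two sets of invariants coincide. Hence $x_r = {}^eF_r$, and $G_e$-invariance is \cite[Theorem~4.2]{PPY}.

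The one mild obstacle is that the different sources use incompatible indexing conventions (orderings of Jordan blocks, placement of the shift $\lambda_j - 1 - s$, signs in the symmetrisation over $\gS_{d_r}$). So the substantive part of writing this up is to fix a comparison dictionary between the $\xi_i^{j,s}$ of this section and the basis vectors of \cite{BB} (equivalently \cite{Yak2}), and to check that the summation over $\gS_d \times \mathcal{C}_\lambda$ with the sign $\sgn(w)$ and exponents $s_j = \lambda_{i_{wj}} - \lambda_{i_j} + \mu_{i_j} - 1$ reproduces exactly the combinatorial formula from the literature. Once that dictionary is in place, no further argument is required and the theorem follows by quotation.
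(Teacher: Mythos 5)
Your proposal matches the paper exactly: the paper states Theorem~\ref{BrB} with no argument beyond the sentence ``The following result is a consequence of \cite[Theorem~4.2]{PPY}, or indeed of \cite[Theorem~4.1]{BB},'' relying (as you do) on \cite[\S 6, Theorem~15]{Yak2} to identify the formula~(\ref{xrreform}) with the invariants constructed in those references. Both you and the author treat the result as a quotation once the notational dictionary is in place, so there is nothing to add.
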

We use reduction modulo $p$ to obtain the following.
\begin{cor}\label{invariants}
The polynomials $x_1,...,x_N$ are $G_e$-invariant when defined over $\K$.
\end{cor}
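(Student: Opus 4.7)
My plan is to derive Corollary~\ref{invariants} from Theorem~\ref{BrB} by the standard reduction modulo $p$, exploiting the fact that the entire construction of $x_r$ descends to $\Z$. With respect to the basis $\{e^s w_i\}$ of $V$, the nilpotent $e$ and the basis vectors $\xi_i^{j,s}$ of $\g_e$ all have entries in $\{0,1\}$, so $\g_e$ descends to a free $\Z$-module $(\g_e)_\Z \subset M_N(\Z)$. The centraliser $G_e$ descends to the closed subgroup scheme $(G_e)_\Z \subset GL_{N,\Z}$ cut out by the $\Z$-linear equations $ge = eg$, and since the only numerical coefficients appearing in (\ref{xrreform}) are signs $\sgn(w) \in \{\pm 1\}$, the element $x_r$ lives in $S((\g_e)_\Z)$. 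Base-changing to $\C$ recovers the setting of Theorem~\ref{BrB}, while base-changing to $\K$ recovers the objects of Corollary~\ref{invariants}.

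Next I would introduce the universal $(G_e)_\Z$-point $\mathbf{g}$ of $(G_e)_\Z$ and form the universal obstruction
$$F := \mathbf{g} \cdot x_r - x_r \in S((\g_e)_\Z) \otimes_\Z \Z[(G_e)_\Z].$$
Specialisation at any $g \in G_e(\K)$ converts $F$ into $g \cdot x_r - x_r$, so it suffices to prove $F = 0$. The key algebraic observation is that $\Z[(G_e)_\Z]$ is an integral domain: the linearity of the defining equations $ge = eg$ means that $(G_e)_\Z$ is a principal open subscheme (at $\det \neq 0$) of the $\Z$-affine space $(\g_e)_\Z$, so $\Z[(G_e)_\Z]$ is a localisation of a polynomial ring over $\Z$. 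Consequently the natural base-change map
$$S((\g_e)_\Z) \otimes_\Z \Z[(G_e)_\Z] \hookrightarrow S((\g_e)_\C) \otimes_\C \C[(G_e)_\C]$$
is injective, so it is enough to check that the image $F_\C$ of $F$ on the right-hand side vanishes.

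Finally, I would invoke Cartier's theorem to see that $(G_e)_\C$ is reduced, so Hilbert's Nullstellensatz identifies $S((\g_e)_\C)$-valued regular functions on $(G_e)_\C$ with their tuples of values at $\C$-points. Theorem~\ref{BrB} is exactly the statement that $F_\C$ takes the value zero at every $\C$-point, whence $F_\C = 0$, and therefore $F = 0$ by injectivity. The only step that really demands thought is verifying that $\Z[(G_e)_\Z]$ is a domain; this rests on the linearity (rather than quadratic or higher-degree nature) of the equations defining $G_e$ inside $GL_N$, which allows $(G_e)_\Z$ to be written as a localised $\Z$-polynomial ring. Beyond that, the argument is formal, and I do not foresee any serious obstacle.
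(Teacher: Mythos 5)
Your argument is correct but takes a genuinely different route from the paper's. The paper proves the corollary ``by generators'': it identifies explicit one-parameter unipotent subgroups $1 + X\xi_i^{j,s}$ and $1 + X\xi_i^{i,r}$ of $G_e$, observes that the inverses $(1+X\xi_i^{j,s})^{-1}$ and $(1+X\xi_i^{i,r})^{-1}$ have integer coefficients, expands $\Ad(g_X)x_r$ as a polynomial in $X$ with coefficients in $S(M_N(\Z))$, and deduces from Theorem~\ref{BrB} that all coefficients beyond the constant term vanish already over $\Z$ and hence after reduction mod $p$; a separate direct computation shows a maximal torus of block-scalars fixes $x_r$, and these subgroups together generate $G_e$. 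Your argument avoids any choice of generators by working globally with the $\Z$-form of the whole group scheme, packaging the invariance condition into a single universal obstruction $F$ whose vanishing is detected after base change to $\C$. This is conceptually cleaner and shows that the mechanism driving the mod-$p$ reduction is really just flatness of the relevant $\Z$-models, but it conceals more technicalities, whereas the paper's argument is completely explicit and elementary.

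One point you should make more precise. You attribute the fact that $\Z[(G_e)_\Z]$ is a localised $\Z$-polynomial ring to the ``linearity'' of the defining equations $ge=eg$, but linearity alone does not suffice. What you actually need is that the free $\Z$-module $(\g_e)_\Z := \spn_\Z\{\xi_i^{j,s}\}$ is a \emph{direct summand} (equivalently, a saturated sublattice) of $M_N(\Z)$, so that $(\g_e)_\Z\otimes_\Z k$ coincides with the $k$-linear centraliser of $e$ in $M_N(k)$ for every field $k$, including $k=\K$; this is what guarantees that the $\K$-points of your $\Z$-scheme really are $G_e(\K)$, and (together with $\Z$-flatness of $\Z[(G_e)_\Z]$) that the universal conjugate $\mathbf{g}\,\xi\,\mathbf{g}^{-1}$ lands back in $(\g_e)_\Z\otimes_\Z\Z[(G_e)_\Z]$ rather than in some larger lattice. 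The saturation does hold here -- for $e$ in Jordan form the $\xi_i^{j,s}$ are $\{0,1\}$-sums of elementary matrix units with pairwise disjoint supports, so any integral matrix commuting with $e$ has integral coordinates with respect to them -- but it is a fact about the specific $\Z$-basis, not a formal consequence of linearity, and it is precisely the point at which ``no jump in the dimension of $\g_e$ mod $p$'' enters. Beyond that omission the reduction is sound: $S((\g_e)_\Z)\otimes_\Z\Z[(G_e)_\Z]$ is $\Z$-torsion-free, so $F$ injects into its complexification, and $(G_e)_\C$ being a reduced affine variety (an open in affine space; Cartier is overkill) lets you read off $F_\C=0$ from Theorem~\ref{BrB}.
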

\begin{proof}
We proceed with all maps and spaces defined over $\C$. The matrix $\xi_i^{j,s}$ is nilpotent when $i \neq j$ or when $s>0$. The elements $\xi_i^{i,0}$ are semisimple. We shall denote by $M_N(\C)$ the ring of $N\times N$ matrices over $\C$ with the usual associative multiplication and basis given the standard matrix units with respect to our $V$-basis $\{e^s w_i : 1\leq i \leq n, 0 \leq s < \lambda_i\}$. Denote by $M_N(\Z)$ the $\Z$-lattice in $M_N(\C)$ arising from the canonical inclusion $\Z \subseteq \C$ and from our choice of $\C$-basis for $M_N(\C)$. Let $X$ be an indeterminate over $\C$ and denote by $M_N(\C)[X]$ the ring of polynomials in $X$ with coefficients in $M_N(\C)$. Let $i \neq j$, let $\lambda_j - \min(\lambda_i, \lambda_j) \leq s < \lambda_j$ and $0 < r < \lambda_i$. Consider the matrices
\begin{eqnarray}\label{gh}
g_{X} = 1 + X\xi_i^{j,s} \text{ and } h_{X} = 1 + X\xi_i^{i,r}.
\end{eqnarray}
They both lie in $M_N(\C)[X]$ and the reader may verify that their inverses are
\begin{eqnarray}\label{inv}
g_{X}^{-1} = 1 - X\xi_i^{j,s} \text{ and } h_X^{-1} = 1 + \sum_{k=1}^{\infty} (-1)^k X^k \xi_i^{i,kr}.
\end{eqnarray}
Note that $M_N(\Z)[X]$ canonically includes into $M_N(\C)[X]$ as a $\Z$-lattice. Since each $\xi_i^{j,s}$ is actually an element of $M_N(\Z)$ and all coefficients in (\ref{gh}) and (\ref{inv}) are integral we may consider the above maps to be elements of $M_N(\Z)[X]$. The elements $x_r$ are also elements of the symmetric algebra $S(M_N(\Z))$, so we get $\Ad(g_X) x_r \in S(M_N(\Z)[X])$. Hence the expression $\Ad(g_X) x_r$ can be written $A_{0,r} + X A_{1,r} + X^2 A_{2,r} + \cdots$ where $A_{i,r} \in S(M_N(\Z))$. By theorem \ref{BrB} we see that $x_r$ is $g_X$-stable for any $X \in \C$ which means $A_{0,r} = x_r$ and $A_{i,r} = 0$ for all $i > 0$. Let $\tilde{A}_{i,r} = A_{i,r} \otimes 1 \in S(M_N(\Z)[X]) \otimes_{\Z} \K$. Then for each $r$ we still have $\tilde{A}_{0,r} = x_r \otimes 1 \in S(M_N(\Z))\otimes_{\Z} \K$ and $\tilde{A}_{i,r}= 0 $ for $i > 0$. But $x_r \otimes 1$ is just $x_r$ defined over $\K$. Now we define $\tilde{g}_X$ analogously to $g_X$, except we now take our maps to be defined over $\K$ and allow $X$ to vary over $\K$. Then the reader may check quite easily that $$\Ad(\tilde{g}_X)(x_r \otimes 1) = \sum_{i\geq 0} \tilde{A}_{i,r}X^i = \tilde{A}_{0,r} = x_r \otimes 1.$$ We may define $\tilde{h}_X$ to be the the analogue of $h_X$ over $\K$ and the same line of reasoning will show that $\Ad(\tilde{h}_X)(x_r\otimes 1) = x_r \otimes 1$.

The remnant of the discussion shall be conducted entirely over $\K$. Let $U$ denote the subgroup of $G_e$ generated by the lines $\{1 + X\xi_i^{j,s}: X \in \K\}$ and $\{1 + X \xi_i^{i,r}: X \in \K\}$ where $i,j,s$ and $r$ vary in the range $i \neq j$, $\lambda_j - \min(\lambda_i, \lambda_j) \leq s < \lambda_j$ and $0 < r < \lambda_i$. The above shows that the lines generating $U$ fix each $x_r$ over $\K$; so must the closure $\overline{U}$.

Consider the group $T \subset G$ in which the elements act on each $V[i]$ $(i = 1,...,n)$ by an arbitrary nozero scalar. The group is toral, closed, contained in $G_e$ and $\Lie(T) = \spn\{ \xi_i^{i,0} : 1\leq i \leq n\}$. We shall show that $T$ fixes each $x_r$. Recall that $\xi_{i_j}^{i_{wj}, s_j} \in \Hom(V[i_j], V[i_{wj}])$ so if $h \in T$ is defined by $hw_i = c_i w_i$ for scalars $c_i \in \K$ then
\begin{eqnarray*}
\Ad(h)\xi_{i_j}^{i_{wj}, s_j} = (c_{i_{j}}^{-1}c_{i_{wj}}) \xi_{i_j}^{i_{wj}, s_j}
\end{eqnarray*}
and
\begin{eqnarray*}
\Ad(h)x_r = \sum_{(w,\mu) \in \gS_d \times \mathcal{C}_\lambda}  \sgn(w)\overbrace{(\prod_{j=1}^dc_{i_{j}}^{-1}c_{i_{wj}})}^{=1} \xi_{i_1}^{i_{w1}, s_1} \cdots \xi_{i_d}^{i_{wd}, s_d} = x_r
\end{eqnarray*}
Now the group $\langle \overline{U}, T \rangle$ generated by $\overline{U}$ and $T$ is closed, connected and its Lie algebra contains a basis for $\g_e$. Hence $\langle \overline{U}, T \rangle = G_e$. Both $\overline{U}$ and $T$ fix $x_r$; so must $G_e$.
\end{proof}

\section{The Restriction of Invariants to Classical Subalgebras}\label{restriction}

Define $K$, $\h$, $\pp$, $(\cdot,\cdot)$, $\epsilon$ and $\sigma$ as per the introduction. Recall that char$(\K) > 2$ when we discuss $K$. Let $e \in \h$ be a nilpotent element and, as usual, denote the Jordan block sizes by $\lambda_1 \geq \lambda_2 \geq \cdots \geq \lambda_n$. As explained in the previous section there exist vectors $\{w_i\}$ such that $\{e^s w_i : 1 \leq i \leq n, 0 \leq s < \lambda_i\}$ forms a basis for $V$. The following condition on the Jordan block sizes can be found in \cite[Theorem~1.4]{Jan}.
\begin{lem}\label{nilpotents}
The $w_i \in V$ can be chosen so that there exists an involution $i \mapsto i'$ on the set $\{1,...,n\}$ such that
\begin{enumerate}
\item{$\lambda_i = \lambda_{i'}$ for all $i = 1,...,n$}
\smallskip
\item{$i = i'$ if and only if $\epsilon (-1)^{\lambda_i} = -1$}
\end{enumerate}
\end{lem}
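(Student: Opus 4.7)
The plan is to argue by induction on $N = \dim V$, exploiting the skew-self-adjointness $(eu,v) = -(u,ev)$ of $e$, which forces
$$(e^a u, e^b v) = (-1)^a(u, e^{a+b}v)$$
for all $u,v \in V$ and all $a,b \geq 0$. The base case is trivial; for the inductive step, fix an indecomposable $e$-stable subspace $V[1]\subseteq V$ of maximal dimension $\lambda_1$, generated by a chain $w_1, ew_1, \ldots, e^{\lambda_1 - 1}w_1$. The strategy is to either split off $V[1]$ alone (self-paired by the involution) or pair it with a second block $V[2]$ (swapped by the involution), leaving a smaller form-preserving problem to which induction applies.

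In the first case, suppose $(\cdot,\cdot)|_{V[1]\times V[1]}$ is non-degenerate. The identity above, together with $e^{\lambda_1}w_1 = 0$, forces $(w_1, e^{\lambda_1-1}w_1) \neq 0$, and comparing $(e^a w_1, e^{\lambda_1-1-a}w_1)$ with $\epsilon (e^{\lambda_1-1-a}w_1, e^a w_1)$ yields $\epsilon = (-1)^{\lambda_1-1}$, equivalently $\epsilon(-1)^{\lambda_1} = -1$. I would then declare $1' = 1$, pass to the $e$-stable orthogonal complement $V[1]^\perp$ of dimension $N-\lambda_1$, which carries a non-degenerate $\epsilon$-form of the same type, and apply induction.

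In the second case, $(\cdot,\cdot)|_{V[1]\times V[1]}$ is degenerate. Using that the map $V \to V^*$ induced by the form is an $e$-module isomorphism (intertwining $e$ with its dual action, which has the same Jordan type), the induced non-degenerate pairing $V[1] \times (V/V[1]^\perp) \to \K$ exhibits $V/V[1]^\perp$ as an indecomposable $e$-module of dimension $\lambda_1$. I would lift this to an $e$-stable subspace $V[2]\subseteq V$ of dimension $\lambda_1$ such that $V[1]\oplus V[2]$ is non-degenerate and each of $V[1], V[2]$ is totally isotropic; choosing $w_2$ so that $(w_1, e^{\lambda_1-1}w_2) = 1$ fixes a standard basis. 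I would set $1' = 2$, $2' = 1$, and restrict to the non-degenerate $e$-stable complement $(V[1]\oplus V[2])^\perp$ to induct. Here no parity constraint on $\lambda_1$ arises because the self-pairings vanish.

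The main obstacle is the degenerate case, specifically extracting a partner $V[2]$ so that $V[1]+V[2]$ is a non-degenerate $e$-stable summand of $V$. This relies on the structural fact that a non-degenerate pairing between indecomposable $e$-modules forces equal Jordan block sizes, together with a careful complement argument for $V[1]^\perp$ inside $V$ (using that indecomposable summands of $V^*$ match those of $V$). Once this is secured, both cases strictly decrease $\dim V$ while preserving the form type, and iterating produces the involution $i\mapsto i'$ with the stated properties.
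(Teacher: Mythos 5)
The paper does not actually prove this lemma from scratch: in the line preceding the statement it simply cites \cite[Theorem~1.4]{Jan} and takes the result as given. So a direct inductive argument is a genuinely different route, and the question is whether yours works. As written it does not quite.

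The central gap is that your case split — on whether $(\cdot,\cdot)|_{V[1]}$ is non-degenerate for the chosen $V[1]$ — is not aligned with the parity dichotomy $\epsilon(-1)^{\lambda_1}=\pm1$ that condition (2) requires. You prove the implication ``$V[1]$ non-degenerate $\Rightarrow \epsilon(-1)^{\lambda_1}=-1$'' (equivalently, $\epsilon(-1)^{\lambda_1}=1$ forces $V[1]$ degenerate for \emph{any} $w_1$), so the ``only if'' half of condition (2) is safe. But the ``if'' half needs the converse: when $\epsilon(-1)^{\lambda_1}=-1$, you must be able to \emph{choose} $w_1$ so that $(w_1, e^{\lambda_1-1}w_1)\neq 0$. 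Your algorithm might instead pick an isotropic $w_1$ (this genuinely happens, e.g. orthogonal $\lambda=(3,3)$), land in your second case, and set $1'=2$ — producing an involution that violates condition (2). What rescues this, and is missing from the writeup, is the observation that $B(\bar{w},\bar{v}):=(w,e^{\lambda_1-1}v)$ descends to a non-degenerate bilinear form on $\ker(e^{\lambda_1})/\ker(e^{\lambda_1-1})$ (using $\im(e^{\lambda_1-1})^\perp=\ker(e^{\lambda_1-1})$), whose symmetry sign is $\epsilon(-1)^{\lambda_1-1}$. When $\epsilon(-1)^{\lambda_1}=-1$ this form is symmetric and hence admits a non-isotropic vector, so a non-degenerate $V[1]$ exists and you should take it; when $\epsilon(-1)^{\lambda_1}=1$ it is alternating, its domain has even dimension (forcing the pairing of blocks of size $\lambda_1$), and every $w_1$ is isotropic. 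The dichotomy must be driven by this form, not by an arbitrary choice of $w_1$.

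The second gap you flag yourself: in the degenerate case you assert that $V[1]$ and its partner $V[2]$ can be taken totally isotropic with $V[1]\oplus V[2]$ a non-degenerate $e$-stable summand. Degeneracy of $(\cdot,\cdot)|_{V[1]}$ only gives $(w_1,e^{\lambda_1-1}w_1)=0$, not vanishing of all $(w_1,e^{c}w_1)$; and the split $V=V[1]^\perp\oplus V[2]$ (which does exist, since $V/V[1]^\perp$ is free of rank one over $\K[e]/(e^{\lambda_1})$, hence projective) does not by itself make $V[1]\oplus V[2]$ non-degenerate. One needs either Witt-type extension arguments or an explicit correction of $w_1,w_2$ by lower-order terms to kill the residual self-pairings; none of this is carried out. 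Until both points are filled in, the proposed induction does not establish the lemma.
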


In plain English, the lemma implies that for a nilpotent element in a symplectic algebra, each Jordan block of odd dimension can be paired with a different Jordan block of the same dimension, and in an orthogonal algebra each Jordan block of even dimension can be paired with a different Jordan block of the same dimension. It is well known that every partition fulfilling these criteria corresponds to a nilpotent $K$-orbit in $\h$. Without disturbing the ordering on the $\lambda_i$ we may (and shall henceforth) assume that $i' \in  \{i-1,i,i+1\}$. Before we continue it will be convenient to normalise the basis for $V$, following the procedure of \cite{Yak2}. Let $\{w_i\}$ be chosen in accordance with the above lemma. Fix $i$. We have $(e^s w_i, w_{i'}) = (-1)^s (w_i, e^s w_{i'})$ so $e^{\lambda_i-1}w_i$ is orthogonal to all $e^sw_{i'}$ with $s > 0$. There is a (unique upto scalar) vector $v \in V[i]$ which is orthogonal to all $e^s w_{i'}$ for $s < \lambda_i - 1$. This $v$ does not lie in $\im(e)$ for otherwise it would be othogonal to all of $V[i] + V[i']$. This of course is not possible since the restriction of $(\cdot,\cdot)$ to $(V[i] + V[i']) \oplus (V[i] + V[i'])\subseteq V \oplus V$ is non-degenerate. It does no harm to replace $w_i$ by $v$ and normalise according to the rule
\begin{eqnarray*}
(w_i, e^{\lambda_i - 1} w_{i'}) = 1 & \text{ whenever } i \leq i'
\end{eqnarray*}

With respect to this basis the matrix of the restriction of $(\cdot,\cdot)$ to $V[i] + V[i']$ is antidiagonal with entries $\pm 1$. We now describe a spanning set for $\h_e$. We have
\begin{eqnarray}\label{sigmaaction}
\sigma(\xi_i^{j,\lambda_j - 1 - s}) = \varepsilon_{i,j,s}\xi_{j'}^{i', \lambda_i - 1 - s}
\end{eqnarray}
where $\varepsilon_{i,j,s}$ is defined by the following relationship \cite{Yak2}
\begin{eqnarray*}
(e^{\lambda_j-1-s}w_j,e^sw_{j'}) = - \varepsilon_{i,j,s}(w_i, e^{\lambda_i-1} w_{i'})
\end{eqnarray*}
Make the notations
\begin{eqnarray*}
\varpi_{i \leq j} = \left\{ \begin{array}{ll}
         1 & \mbox{ if $i\leq j$}\\
        -1 & \mbox{ if $i > j$}\end{array} \right.\\
\zeta_i^{j,s} =  \xi_i^{j,\lambda_j-1-s} +  \varepsilon_{i,j,s}\xi_{j'}^{i', \lambda_i - 1 - s}\\
\eta_i^{j,s} = \xi_i^{j,\lambda_j-1-s} -  \varepsilon_{i,j,s}\xi_{j'}^{i', \lambda_i - 1 - s}.
\end{eqnarray*}
The maps $\zeta_i^{j,s}$ span $\h_e$ and the maps $\eta_i^{j,s}$ span $\pp_e$. We define a dual spanning set
\begin{eqnarray*}
(\zeta_i^{j,s})^\ast := (\xi_i^{j,\lambda_j-1-s})^\ast +  \varepsilon_{i,j,s}(\xi_{j'}^{i', \lambda_i - 1 - s})^\ast\\
(\eta_i^{j,s})^\ast := (\xi_i^{j,\lambda_j-1-s})^\ast -  \varepsilon_{i,j,s}(\xi_{j'}^{i', \lambda_i - 1 - s})^\ast.
\end{eqnarray*}
Note that we do \emph{not} have $(\zeta_i^{j,s})^\ast(\zeta_k^{l,r}) \in \{0,1\}$, contrary to the common convention for dual basis vectors. For instance, $\zeta_i^{i,\lambda_i-2} = 2\xi_i^{i,1}$ when $i = i'$ and $\lambda_i \geq 2$, and so $(\zeta_i^{i,\lambda_i-2})^\ast(\zeta_i^{i,\lambda_i-2}) = 4$. Our discussion shall be more concise due to this choice of definition (see Remark~\ref{duals} below).

\begin{lem}\label{spanningdetails}
The following are true:
\begin{enumerate} 
\item{$\varepsilon_{i,j,s} = (-1)^{\lambda_j - s} \varpi_{i\leq i'} \varpi_{j \leq j'}$;}
\smallskip
\item{$\varepsilon_{i,j,s} = \varepsilon_{j',i',s}$;}
\smallskip
\item{The only linear relations amongst the $\zeta_i^{j,s}$ are those of the form $\zeta_i^{j,s} = \varepsilon_{i,j,s} \zeta_{j'}^{i',s}$;}
\smallskip
\item{The only linear relations amongst the $\eta_i^{j,s}$ are those of the form $\eta_i^{j,s} = - \varepsilon_{i,j,s} \eta_{j'}^{i',s}$;}
\smallskip
\item{Analogous relations hold amongst the $(\zeta_i^{j,s})^\ast$ and the $(\eta_i^{j,s})^\ast$. These are the only such relations;}
\smallskip
\item{The $(\zeta_i^{j,s})^\ast$ span $\h_e^\ast$ and the $(\eta_i^{j,s})^\ast$ span $\pp_e^\ast$.}
\end{enumerate}
\end{lem}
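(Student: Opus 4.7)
The plan is to prove (1) by a direct calculation from the defining equation, obtain (2) essentially for free from $\sigma^{2}=1$, and then bootstrap (3)--(6) from the involutive symmetry of $\sigma$ on $\g_e$ and of the dual involution $\sigma^{\ast}$ on $\g_e^{\ast}$, whose $+1$- and $-1$-eigenspaces are $\h_e^{\ast}$ and $\pp_e^{\ast}$ respectively.

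For part (1) I would unfold the defining equation for $\varepsilon_{i,j,s}$ using the skew-adjointness $(ex,y)=-(x,ey)$ of $e$ together with $(x,y)=\epsilon(y,x)$, producing $(e^{\lambda_j-1-s}w_j,e^{s}w_{j'}) = (-1)^{s}\epsilon\,(w_{j'},e^{\lambda_j-1}w_j)$. A preliminary step is to extend the normalisation to all indices in the form $(w_i,e^{\lambda_i-1}w_{i'})=\varpi_{i\leq i'}$: the case $i>i'$ follows by swapping the arguments of the form and applying $\epsilon(-1)^{\lambda_i}=1$ from Lemma~\ref{nilpotents}(2). Substituting and rearranging yields the formula in (1), with the factor $\varpi_{j\leq j'}$ emerging after a case split on whether $j=j'$. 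Part (2) is then almost automatic: applying $\sigma$ to both sides of (\ref{sigmaaction}) and using $(i')'=i$, $(j')'=j$ and $\lambda_i=\lambda_{i'}$ forces $\varepsilon_{i,j,s}\varepsilon_{j',i',s}=1$, whence equality since each factor is $\pm 1$.

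For (3) and (4), substituting (2) into the definitions gives immediately $\varepsilon_{i,j,s}\zeta_{j'}^{i',s}=\zeta_i^{j,s}$ and $-\varepsilon_{i,j,s}\eta_{j'}^{i',s}=\eta_i^{j,s}$. Since each $\zeta_i^{j,s}$ is supported on at most the pair $\{\xi_i^{j,\lambda_j-1-s},\xi_{j'}^{i',\lambda_i-1-s}\}$ in the basis of $\g_e$, a linear dependence between two such elements forces their supports to coincide, i.e.\ for the index triples to be swap-related; the identical observation handles $\eta$. For (5) and (6) I would note that $\sigma^{\ast}$ acts on the dual basis by the transpose of the matrix of $\sigma$, so by (2) we have $\sigma^{\ast}(\xi_i^{j,\lambda_j-1-s})^{\ast}=\varepsilon_{i,j,s}(\xi_{j'}^{i',\lambda_i-1-s})^{\ast}$. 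Hence $(\zeta_i^{j,s})^{\ast}=(1+\sigma^{\ast})(\xi_i^{j,\lambda_j-1-s})^{\ast}$ and $(\eta_i^{j,s})^{\ast}=(1-\sigma^{\ast})(\xi_i^{j,\lambda_j-1-s})^{\ast}$ are (twice) the projections onto the $\pm 1$-eigenspaces $\h_e^{\ast}$ and $\pp_e^{\ast}$. Since the $(\xi_i^{j,\lambda_j-1-s})^{\ast}$ span $\g_e^{\ast}$, their images span the respective eigenspaces, which gives (6); and the support-comparison argument above transfers verbatim to produce (5).

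The main obstacle will be the sign bookkeeping in (1), which subdivides into several sub-cases according to whether $i=i'$ and whether $j=j'$, each calling on the appropriate consequence of Lemma~\ref{nilpotents}(2) to extract the correct $\varpi$-factor. All subsequent parts of the lemma are formal consequences of (1), (2) and the involutive nature of $\sigma$, so no further surprises are expected.
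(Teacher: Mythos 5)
Your proposal is essentially correct and arrives at the same conclusions, but it takes a genuinely different route in two places, and one part is a little informal. For part (2) you observe that applying $\sigma$ twice to \eqref{sigmaaction} and using $\sigma^{2}=\mathrm{id}$, together with $(i')'=i$, $(j')'=j$, and $\lambda_{i'}=\lambda_i$, yields $\varepsilon_{i,j,s}\varepsilon_{j',i',s}=1$ directly; the paper instead derives (2) from the explicit formula of part (1) by showing $\varpi_{i\leq i'}\varpi_{i'\leq i}=\epsilon(-1)^{\lambda_i-1}$. Your route is cleaner and does not require (1) at all. For parts (3)--(6) the paper proves (3) by constructing an explicit linear isomorphism $\g_e/\pp_e\to\h_e$ sending $\xi_i^{j,\lambda_j-1-s}+\pp_e\mapsto\zeta_i^{j,s}$ and pushing the known relations in the quotient through it, then deduces (5) ``by duality'' and (6) via an annihilator/pairing argument. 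You instead work throughout with the dual involution $\sigma^{\ast}$ and observe that $(\zeta_i^{j,s})^\ast=(1+\sigma^\ast)(\xi_i^{j,\lambda_j-1-s})^\ast$ and $(\eta_i^{j,s})^\ast=(1-\sigma^\ast)(\xi_i^{j,\lambda_j-1-s})^\ast$ are, up to the factor $2$, the eigenprojections onto $\h_e^\ast$ and $\pp_e^\ast$; part (6) then follows immediately because projecting a spanning set gives a spanning set. These are two packagings of the same underlying fact, but yours is conceptually tighter.

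One small point to shore up: your ``support-comparison'' argument for (3)--(5) is stated in terms of a dependence \emph{between two} elements, but a general linear relation $\sum c_{i,j,s}\zeta_i^{j,s}=0$ can a priori involve many terms. To close this you should note that each basis vector $\xi_i^{j,\lambda_j-1-s}$ of $\g_e$ occurs in exactly the two elements $\zeta_i^{j,s}$ and $\zeta_{j'}^{i',s}$ (which may coincide), so equating the coefficient of each $\xi_i^{j,\lambda_j-1-s}$ to zero decomposes an arbitrary relation into the desired two-term relations. With that sentence added, the argument is complete. Finally, in part (1) you do not in fact need a separate ``case split on whether $j=j'$'': once the preliminary identity $(w_k,e^{\lambda_k-1}w_{k'})=\varpi_{k\leq k'}$ is established for all indices $k$, the $\varpi_{j\leq j'}$ factor falls out of $(w_j,e^{\lambda_j-1}w_{j'})$ directly, just as the paper does it.
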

\begin{proof}
We have $$\varepsilon_{i,j,s} = \frac{-(e^{\lambda_j-1-s}w_j,e^sw_{j'})}{(w_i, e^{\lambda_i-1} w_{i'})} = \frac{(-1)^{\lambda_j-s}(w_j,e^{\lambda_j-1}w_{j'})}{(w_i, e^{\lambda_i-1} w_{i'})}.$$ We claim that $(w_i, e^{\lambda_i - 1} w_{i'}) = \varpi_{i \leq i'}$. The bilinear form $(\cdot,\cdot)$ is normalised so that $(w_i, e^{\lambda_i - 1} w_{i'}) = 1$ whenever $i \leq i'$, therefore we need only show that $(w_i, e^{\lambda_i - 1} w_{i'}) = -1$ whenever $i > i'$. If $i > i'$ then $$(w_i, e^{\lambda_i - 1} w_{i'}) = (-1)^{\lambda_i - 1} (e^{\lambda_i-1} w_i, w_{i'}) = \epsilon(-1)^{\lambda_i-1} (w_{i'}, e^{\lambda_i-1}w_i) =  \epsilon(-1)^{\lambda_i-1}.$$ However $ \epsilon(-1)^{\lambda_i-1} = -1$ whenever $i \neq i'$ by lemma \ref{nilpotents}, conculding part 1. Next observe that $\varpi_{i\leq i'} \varpi_{i' \leq i} = 1$ if and only if $i = i'$ hence $\varpi_{i\leq i'} \varpi_{i'\leq i} = \epsilon(-1)^{\lambda_i-1}$. Part 2 now follows from part 1.

The equality $\zeta_i^{j,s} = \varepsilon_{i,j,s} \zeta_{j'}^{i',s}$ follows from part 2. To see that these are the only relations we note that $\pp_e$ is spanned by vectors $\xi_i^{j,\lambda_j-1-s} - \varepsilon_{i,j,s} \xi_{j'}^{i',\lambda_i - 1-s}$ and that $\g_e / \pp_e \cong \h_e$. Then the map $ \xi_i^{j,\lambda_j-1-s} + \pp_e\ra \zeta_i^{j,s}$ is well defined and extends to a linear map $\g_e / \pp_e \ra \h_e$. It is surjective and so by dimension considerations  it is a vector space isomorphism. The only linear relations amongst the vectors $\xi_i^{j,\lambda_j-1-s} + \pp_e$ in $\g_e / \pp_e$ are those of the form $\xi_i^{j,\lambda_j-1-s} - \varepsilon_{i,j,s} \xi_{j'}^{i',\lambda_i - 1-s} + \pp_e= 0$. Part 3 follows. An identical argument works for part 4.

Part 5 is a straightforward consequence of duality, whilst for 6 we observe that $(\zeta_i^{j,s})^\ast$ vanishes on each $\eta_k^{l,r}$ and $(\eta_i^{j,s})^\ast$ vanishes on each $\zeta_k^{l,r}$. Since the set of all $(\zeta_i^{j,s})^\ast$ and all $(\zeta_i^{j,s})^\ast$ together span $\g_e^\ast$ it is quite evident that the zetas must span $\h_e^\ast$ and the etas must span $\pp_e^\ast$.
\end{proof}
\begin{rem}\label{duals}
\rm{Using part 3 of the above lemma it is quite easy to describe a basis for $\h_e$ by refining the set $\{\zeta_i^{j,s}\}$. This basis has a rather tedious combinatorial description which we shall not need, and so we avoid it here. From this basis it is possible to define a dual basis in the traditional manner, and it is easy to prove that this coincides (upto scalars) with the subset of $\{(\zeta_i^{j,s})^\ast\}$ which is obtained by carrying out the analogous refinement upon the set $\{(\zeta_i^{j,s})^\ast\}$ spanning $\h_e^\ast$. The point of this remark is that our definition of $(\zeta_i^{j,s})^\ast$ coincides with the more canonical one, although ours is easier to define. Analogous statements hold for $\pp_e$.}
\end{rem}

Fix $r \in \{1,...,N\}$ and set $d = d_r$. Recall that $\mathcal{C_\lambda}$ denotes the set of compositions of $\lambda$ fulfilling both $|\mu| = r$ and $l(\mu) = d$, and that $\Theta_r (w, \mu) = \sgn(w)\xi_{i_1}^{i_{w1}, s_1} \cdots \xi_{i_d}^{i_{wd},s_d}$ where $s_j = \lambda_{i_{wj}^{\mu}} - \lambda_{i_j^{\mu}} + \mu_{i_j^{\mu}} -1$. The invariants described in Section~1 are defined by
\begin{eqnarray}\label{xrreform}
x_r = \sum_{(w,\mu) \in \gS_d\times \mathcal{C}_\lambda} \Theta_r(w,\mu) \in \K[\g_e^\ast]^{G_e}. 	%Note that the choices of $\mu$ correspond to nilpotent orbits in certain Levis,
																			% and the permutations correspond to symmetries of those Levis. This could be
																			% another way of interpreting/defining the invariants, and could lead to a uniform
																			% construction for orthogonal types.
\end{eqnarray}
Since $\K[\g_e^\ast]$ is spanned by monomials in the basis $\{\xi_i^{j,s}\}$, the involution $\sigma : \g_e \ra \g_e$ extends uniquely to a $\K$-algebra automorphism of $\K[\g_e^\ast] \ra \K[\g_e^\ast]$ which we shall also denote by $\sigma$. This extension is clearly involutory. The following proposition shall be pivotal to our understanding of the symmetric invariants for centralisers of nilpotent elements in classical subalgebras of $\g$. 
\begin{prop}\label{sigmaxr}
$\sigma(x_r) = (-1)^r x_r$
\end{prop}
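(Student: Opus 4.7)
The plan is to apply $\sigma$ termwise to each monomial $\Theta_r(w, \mu)$ in (\ref{xrreform}) and to exhibit an involution $(w, \mu) \leftrightarrow (\tilde w, \tilde \mu)$ on $\gS_d \times \mathcal{C}_\lambda$ relative to which $\sigma(\Theta_r(w, \mu)) = (-1)^r \Theta_r(\tilde w, \tilde \mu)$; summing over this involution then gives $\sigma(x_r) = (-1)^r x_r$.

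Writing $s_j = \lambda_{i_{wj}} - 1 - (\lambda_{i_j} - \mu_{i_j})$ and substituting into the formula~(\ref{sigmaaction}) for $\sigma$ on basis elements, one obtains
$$\sigma(\xi_{i_j}^{i_{wj}, s_j}) = \varepsilon_{i_j, i_{wj}, \, \lambda_{i_j} - \mu_{i_j}} \, \xi_{i_{wj}'}^{i_j', \, \mu_{i_j} - 1}.$$
Taking the product over $j$ and applying part~(1) of Lemma~\ref{spanningdetails}: the $(-1)^{\lambda_{i_{wj}} - \lambda_{i_j}}$ contributions cancel (as $w$ permutes $\{i_j\}$), and the factors $\varpi_{i_j \leq i_j'} \, \varpi_{i_{wj} \leq i_{wj}'}$ appear in matched pairs whose product over $j$ collapses to $+1$. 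The only surviving sign is $\prod_j (-1)^{\mu_{i_j}} = (-1)^{|\mu|} = (-1)^r$, and so
$$\sigma(\Theta_r(w, \mu)) = (-1)^r \, \sgn(w) \prod_{j=1}^d \xi_{i_{wj}'}^{i_j', \, \mu_{i_j} - 1}.$$

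To realise the right-hand side as a term $\Theta_r(\tilde w, \tilde \mu)$, let $\{i_1 < \cdots < i_d\}$ be the support of $\mu$ and define $\tilde \mu$, supported on $\{i_1', \ldots, i_d'\}$, by
$$\tilde \mu_{i_l'} := \mu_{i_{w^{-1}(l)}} + \lambda_{i_l} - \lambda_{i_{w^{-1}(l)}}, \qquad l = 1, \ldots, d.$$
Let $\tau \in \gS_d$ be the permutation sorting $(i_1', \ldots, i_d')$ into increasing order and set $\tilde w := \tau^{-1} w^{-1} \tau$. Routine checks then give: $|\tilde \mu| = r$ by telescoping of the $\lambda$-differences; $\tilde \mu_{i_l'} \geq 1$ iff $s_{w^{-1}(l)} \geq 0$, so $\tilde \mu$ has length $d$ precisely when $\Theta_r(w, \mu) \neq 0$; $\sgn(\tilde w) = \sgn(w)$; and after reindexing the commutative product over $j$ by $k$ satisfying $\tau(k) = wj$, the factor at position $k$ is $\xi_{i_k^{\tilde \mu}}^{i_{\tilde w k}^{\tilde \mu}, \, \mu_{i_{w^{-1}\tau(k)}} - 1}$ whose exponent simplifies, by the defining formula for $\tilde \mu$, to
$$\mu_{i_{w^{-1}\tau(k)}} - 1 = \lambda_{i_{w^{-1}\tau(k)}} - \lambda_{i_{\tau(k)}} + \tilde \mu_{i_{\tau(k)}'} - 1 = s_k^{\tilde \mu}.$$
Hence $\sgn(w) \prod_j \xi_{i_{wj}'}^{i_j', \mu_{i_j} - 1} = \Theta_r(\tilde w, \tilde \mu)$, and a direct computation confirms $(\tilde{\tilde w}, \tilde{\tilde \mu}) = (w, \mu)$, so the assignment is an involution on the subset of pairs with $\Theta_r(w, \mu) \neq 0$. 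Summing the identity of the previous step over this involution completes the proof.

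The main obstacle is the exponent verification in the final step: the particular $\lambda$-correction built into the definition of $\tilde \mu_{i_l'}$ is precisely what is needed for the canonical formula $s_k^{\tilde \mu} = \lambda_{i_{\tilde w k}^{\tilde \mu}} - \lambda_{i_k^{\tilde \mu}} + \tilde \mu_{i_k^{\tilde \mu}} - 1$ to collapse to the natural exponent $\mu_{i_j} - 1$ produced by $\sigma$. The overall sign $(-1)^r$, by contrast, is already forced by the initial termwise computation and is insensitive to the precise form of the reindexing.
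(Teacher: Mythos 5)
Your proof is correct and follows essentially the same line as the paper's: apply $\sigma$ termwise via formula~(\ref{sigmaaction}), reduce the product of $\varepsilon$'s to $(-1)^r$ using Lemma~\ref{spanningdetails}(1), and exhibit for each $(w,\mu)$ a pair $(\tilde w,\tilde\mu)$ with $\sigma(\Theta_r(w,\mu))=(-1)^r\Theta_r(\tilde w,\tilde\mu)$. Your $\tilde\mu$ agrees with the paper's $\mu'$ after the substitution $l=wk$, your $\tilde w=\tau^{-1}w^{-1}\tau$ is the same permutation the paper obtains as the restriction of $'\circ\omega^{-1}\circ{}'$, and your explicit verification that the assignment is an involution makes the summation step slightly more airtight than the paper, which only checks that nonzero terms map to nonzero terms.
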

\begin{proof}
Fix $(w,\mu) \in \gS_{d} \times \mathcal{C}_\lambda$. We shall show that if $\Theta_r(w,\mu) \neq 0$ then there exists a pair $(w', \mu') \in \gS_d \times \mathcal{C}_\lambda$ such that $\sigma(\Theta_r(w,\mu)) = (-1)^{r}\Theta_r(w', \mu')$. In view of formula (\ref{xrreform}) the proposition shall follow. By formula (\ref{sigmaaction}) we have
\begin{eqnarray*}
\sigma(\Theta_r(w,\mu)) &=& \sgn(w)\sigma(\xi_{i_1}^{i_{w1}, s_1} \cdots \xi_{i_d}^{i_{wd}, s_d})\\ &=& (\prod_{k=1}^d\varepsilon_{i_k, i_{wk}, \lambda_{i_k} - \mu_{i_k}})\sgn(w) \xi_{(i_{w1})'}^{(i_{1})', \mu_{i_1} - 1} \cdots \xi_{(i_{wd})'}^{(i_{d})',\mu_{i_d} - 1}
\end{eqnarray*}
We must examine the coefficient 
\begin{eqnarray*}
\prod_{k=1}^d\varepsilon_{i_k, i_{wk}, \lambda_{i_k} - \mu_{i_k}} &=& \prod_{k=1}^d (-1)^{\lambda_{i_{wk}} - \lambda_{i_k} + \mu_{i_k}} \varpi_{i_k \leq i_k'} \varpi_{i_{wk} \leq i_{wk}'}\\ &=& (\prod_{k=1}^d (-1)^{\lambda_{i_{k}}} \varpi_{i_{k} \leq i_{k}'})(\prod_{k=1}^d (-1)^{\lambda_{i_{wk}}} \varpi_{i_{wk} \leq i_{wk}'})(\prod_{k=1}^d (-1)^{\mu_{i_k}})
\end{eqnarray*}
By definition $|\mu| = r$ implies $$\prod_{k=1}^d (-1)^{\mu_{i_k}} = (-1)^{\sum_{k=1}^d \mu_{i_k}} = (-1)^r.$$ Since $w$ is a permutation $$\prod_{k=1}^d (-1)^{\lambda_{i_{k}}} \varpi_{i_{k} \leq i_{k}'} = \prod_{k=1}^d (-1)^{\lambda_{i_{wk}}} \varpi_{i_{wk} \leq i_{wk}'}$$ and the product $\prod_{k=1}^d\varepsilon_{i_k, i_{wk}, \lambda_{i_k} - \mu_{i_k}}$ reduces to $(-1)^r$.

It remains to prove that $\Theta_r(w', \mu') = \sgn(w) \xi_{(i_{w1})'}^{(i_{1})', s_1+\lambda_{i_{1}} - \lambda_{i_{w1}}} \cdots \xi_{(i_{wd})'}^{(i_{d})',s_d+\lambda_{i_{d}} - \lambda_{i_{wd}}}$ for some choice of $(w', \mu') \in \gS_d\times \mathcal{C}_\lambda$. For $k = 1,...,d$ let $j_k = (i_{wk})'$; note that $j_1,...,j_d$ is not an ordered sequence. Let $\mu'$ be the composition of $\lambda$ with nonzero entries in positions indexed by the $j_k$ such that $\mu'_{j_k} = \mu_{i_k} + \lambda_{i_{wk}} - \lambda_{i_k}$. We have $l(\mu') = d$ by definition. Furthermore $$|\mu'| = \sum_{i=1}^k \mu'_{j_k} = \sum_{i=1}^k (\mu_{i_k} + \lambda_{i_{wk}} - \lambda_{i_k}) = \sum_{i=1}^k \mu_{i_k} = |\mu| = r.$$ Now in order to see that $0 \leq \mu'_k \leq \lambda_k$ we use the fact that $\Theta_r(w,\mu) \neq 0$. We have $0 \leq \lambda_{i_{wk}}-1-s_k \leq \min(\lambda_{i_k}, \lambda_{i_{wk}})$ and by definition of $s_k$ we have $$0 \leq \lambda_{i_k} - \mu_{i_k} \leq \lambda_{i_{wk}}.$$ The left hand inequality gives us $\mu'_{j_k} \leq \lambda_{i_{wk}} = \lambda_{j_k}$, whilst the right hand inequality tells us that $0 \leq \lambda_{i_{wk}} - (\lambda_{i_k} - \mu_{i_k}) = \mu'_{j_k}$. It follows that $0 \leq \mu'_k \leq \lambda_k$ for $k=1,...,n$ so that $\mu'$ is a composition of $\lambda$. We have shown that $\mu' \in \mathcal{C}_\lambda$.

We now aim to define $w'\in \gS_d$. If we define an element $\omega \in \mathfrak{S}_n$ by $\omega (i_k) = i_{wk}$ for $k = 1,...,d$ and $\omega( i ) = i$ for all $\mu_i = 0$, then $\sgn(\omega) = \sgn(w)$. The element $\nu$ of $\mathfrak{S}_n$ which sends $(i_{wk})'$ to $(i_k)'$ is $' \circ \omega^{-1} \circ '$, where $\circ$ denotes composition of permutations in $\mathfrak{S}_n$. But $'$ is an involution so $\nu$ and $\omega^{-1}$ are conjugate in $\mathfrak{S}_n$ implying $\sgn(\nu) = \sgn(\omega)$. Now $\nu$ preserves the set $\{j_1, ..., j_d\}$ and acts trivially on its complement, therefore we may take $w'$ to be the restriction of $\nu$ to $\{j_1, ..., j_d\}$. This element will have $\sgn(w') = \sgn(\nu) = \sgn(\omega) = \sgn(w)$.

From the above we get $j_{w' k} = \nu (i_{wk})' = (i_{k})'$, $\mu'_{j_{k}} = \mu'_{(i_{wk})'} = \mu_{i_k} + \lambda_{i_{wk}} - \lambda_{i_k}$, $\lambda_{j_{w'k}} = \lambda_{i_k}$ and $\lambda_{j_k} = \lambda_{i_{wk}}$ so that $$\xi_{j_k}^{j_{w'k}, \lambda_{j_{w'k}} - \lambda_{j_k} + \mu'_{j_k} - 1} = \xi_{(i_{wk})'}^{(i_{k})',\mu_{i_k} - 1}.$$ It follows that $(w', \mu')$ is the required pair. Since $\sigma$ is non-degenerate on $\g_e$ and $\Theta_r(w',\mu')$ is a product of terms $\sigma(\xi_{i_k}^{i_{wk}, s_k})$ with each $\xi_{i_k}^{i_{wk}, s_k} \neq 0$ we conclude that $\Theta_r(w',\mu') \neq 0$.
\end{proof}
As an immediate corollary we get

\begin{cor}\label{vanishing1}
\begin{enumerate} The following are true:
\item{$x_{r}|_{\mathfrak{k}_e^\ast} = 0$  for $r$ odd;}
\item{$x_{r}|_{\pp_e^\ast} = 0$ for $r + d_r$ odd.}
\end{enumerate}
\end{cor}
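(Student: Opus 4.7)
The plan is to deduce both vanishing statements directly from Proposition~\ref{sigmaxr}, by pinning down how $\sigma$ acts under the two restriction maps $\K[\g_e^\ast] \to \K[\h_e^\ast]$ and $\K[\g_e^\ast] \to \K[\pp_e^\ast]$.

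First I would observe that under the identifications made in the introduction ($\h_e^\ast$ with the annihilator of $\pp_e$ in $\g_e^\ast$, and $\pp_e^\ast$ with the annihilator of $\h_e$ in $\g_e^\ast$), the transposed involution $\sigma^\top$ on $\g_e^\ast$ restricts to the identity on $\h_e^\ast$ and to minus the identity on $\pp_e^\ast$. Indeed, for $\alpha \in \h_e^\ast$ and $v = u + w$ with $u \in \h_e$ and $w \in \pp_e$, we have $(\sigma^\top \alpha)(v) = \alpha(\sigma v) = \alpha(u - w) = \alpha(u) = \alpha(v)$; the symmetric computation on $\pp_e^\ast$ yields $\sigma^\top \alpha = -\alpha$. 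Since the extension of $\sigma$ to the polynomial algebra is precisely pullback along $\sigma^\top$, the restriction map $\K[\g_e^\ast] \to \K[\h_e^\ast]$ intertwines $\sigma$ with the identity, whilst $\K[\g_e^\ast] \to \K[\pp_e^\ast]$ intertwines $\sigma$ with the sign involution $f \mapsto (-1)^{\deg f} f$ on homogeneous components.

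Combining this with Proposition~\ref{sigmaxr} settles both parts. For part (1), if $r$ is odd then restricting the identity $\sigma(x_r) = -x_r$ to $\h_e^\ast$ gives $x_r|_{\h_e^\ast} = -x_r|_{\h_e^\ast}$, which forces $x_r|_{\h_e^\ast} = 0$ because $\chr(\K) \neq 2$. For part (2), restricting to $\pp_e^\ast$ and using that $x_r$ is homogeneous of degree $d_r$ produces the equality $(-1)^{d_r} x_r|_{\pp_e^\ast} = (-1)^r x_r|_{\pp_e^\ast}$, so the hypothesis that $r + d_r$ is odd forces $x_r|_{\pp_e^\ast} = 0$.

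There is essentially no obstacle once Proposition~\ref{sigmaxr} is in hand: the corollary is a purely formal parity argument. The only step requiring any care is matching the $\sigma$-action on $\K[\g_e^\ast]$ with its induced actions on $\K[\h_e^\ast]$ and $\K[\pp_e^\ast]$, and in particular keeping track of the degree-parity sign that appears in the second case.
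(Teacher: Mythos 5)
Your proof is correct, and it takes a cleaner route than the paper's. Both arguments rest on Proposition~\ref{sigmaxr}, but the paper proceeds by decomposing $x_r$ into monomials with respect to a basis $\mathcal{B}_0 \cup \mathcal{B}_1$ of $\h_e \oplus \pp_e$, counting how many factors in each monomial come from $\mathcal{B}_1$ (resp.\ $\mathcal{B}_0$), and observing that a monomial with at least one such factor vanishes on $\h_e^\ast$ (resp.\ $\pp_e^\ast$); for part (2) this requires a split into two cases according to the parities of $r$ and $d_r$. You instead establish once and for all that the restriction maps $\K[\g_e^\ast] \to \K[\h_e^\ast]$ and $\K[\g_e^\ast]\to \K[\pp_e^\ast]$ intertwine $\sigma$ with the identity and with $f\mapsto(-1)^{\deg f}f$ respectively, which reduces both parts to a one-line parity computation using $\chr(\K)\neq 2$ (needed, and used implicitly by the paper too). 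The computations verifying $\sigma^\top|_{\h_e^\ast}=\mathrm{id}$ and $\sigma^\top|_{\pp_e^\ast}=-\mathrm{id}$, and the identification of the extended $\sigma$ on $\K[\g_e^\ast]$ with pullback along $\sigma^\top$, are correct. The upshot is that your version avoids the monomial analysis and the case split entirely, at the modest cost of spelling out the intertwining property; the paper's proof is more hands-on and makes visible exactly which monomial summands contribute, which can be useful elsewhere (e.g.\ it is essentially the same analysis reused in Lemma~\ref{dees}).
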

\begin{proof}
Let $\BB_0$ be a basis for $\h_e$ and $\BB_1$ a basis for $\pp_e$, so that $\BB_0 \cup \BB_1$ is a basis for $\g_e$. Then the monomials in $\BB_0 \cup \BB_1$ form an eigenbasis for the action of $\sigma$ on $\K[\g_e^\ast]$. The eigenvalues are $\pm1$ depending on the parity of the number of factors coming from $\BB_1$ in a given monomial. 

Now fix $r=1,...,N$ and write $x_r$ in the above eigenbasis. If $r$ is odd then by Proposition~\ref{sigmaxr} we have $\sigma x_r = - x_{r}$. It follows that there are an odd (ie. nonzero) number of factors from $\BB_1$ in each monomial summand of $x_r$. Each of these factors vanishes on $\h_e^\ast$; so must $x_r$, whence 1.

If $r + d_r$ is odd then there are two possibilities: either $r$ is odd and $d_r$ even or vice versa. Assume the former so that $x_r$ is a sum of monomials of even degree and $\sigma x_r = - x_r$. There must be an odd number of factors from $\BB_1$ in each monomial and since each monomial has even degree there is also an odd (ie. nonzero) number of factors from $\BB_1$ in each monomial. Each of these factors restrict to zero on $\pp_e^\ast$ and so must $x_r$. If, on the other hand, $r$ is even and $d_r$ is odd then $\sigma x_r = x_r$ so each monomial summand of $x_r$ contains an even number of factors from $\BB_1$. Since each such monomial has odd degree, there is an odd (ie. nonzero) number of factors from $\BB_0$ in each monomial. Again each of these factors restrict to zero on $\pp_e^\ast$, and so does $x_r$. This completes the proof.
\end{proof}

\section{Jacobian Loci of the Invariants}\label{JacLoc}

\subsection{The General Linear Case}\label{JacLoc1}
If $f_1,...,f_l \in \K[\g_e^\ast]$ and $U \subseteq \g_e^\ast$ is a subspace then we denote by $$J_U(f_i : i = 1,...,l)$$ the \emph{Jacobian locus of the $f_i$ in $U$}: that is the set of $\gamma \in \g_e^\ast$ for which the differentials $d_\gamma f_1|_U,...,d_\gamma f_l|_U $ are linearly dependent. Define $$J := J_{\g_e^\ast}(x_r : r=1,...,N).$$ We aim to show that the condition on the codimension of the Jacobian locus in Skryabin's theorem (see \cite[Thereom~5.4]{Skr} or Theorem~\ref{skryabin} below) is satisfied for these invariants. We shall prove the following.
\begin{prop}\label{Codimension 2}
\emph{codim}$_{\g_e^\ast} J \geq 2$.
\end{prop}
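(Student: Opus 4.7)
The plan is to mirror the strategy of \cite[$\S 3$]{PPY} with the necessary adjustments for positive characteristic. Since each $x_r \in \K[\g_e^\ast]^{G_e}$ by Corollary~\ref{invariants}, a direct computation with the coadjoint action yields $d_\gamma x_r \in (\g_e)_\gamma$ at every $\gamma \in \g_e^\ast$, where $(\g_e)_\gamma$ denotes the coadjoint stabiliser. In particular the rank of the Jacobian at $\gamma$ is bounded above by $\dim(\g_e)_\gamma$. Yakimova's proof of Elashvili's conjecture \cite{Yak1} gives $\ind(\g_e) = \ind(\g) = N$, so $\dim(\g_e)_\gamma \geq N$ for all $\gamma$, with equality on a dense open subset $\g_e^{\ast,\textnormal{reg}}$. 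Writing $\g_e^{\ast,\textnormal{sing}} := \g_e^\ast \setminus \g_e^{\ast,\textnormal{reg}}$ and $Y := \{\gamma \in \g_e^{\ast,\textnormal{reg}} : d_\gamma x_1, \ldots, d_\gamma x_N \text{ are linearly dependent}\}$, the locus $J$ is contained in $\g_e^{\ast,\textnormal{sing}} \cup Y$, and it therefore suffices to bound each of these subsets by codimension 2 separately.

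The bound $\codim \g_e^{\ast,\textnormal{sing}} \geq 2$ depends only on the coadjoint geometry of $\g_e$. I plan to establish it by stratifying $\g_e^\ast$ according to $\dim(\g_e)_\gamma$ and verifying, at the generic point of any candidate codimension 1 irreducible component, that $N$ linearly independent elements of $(\g_e)_\gamma$ can be exhibited using the explicit dual spanning set $\{(\xi_i^{j,s})^\ast\}$ from Section~\ref{elemInv}. The bracket of a pair of basis vectors of $\g_e$ admits a clean combinatorial description in terms of the partition $\lambda$, which reduces the argument to a direct partition-dependent check in the spirit of \cite[$\S 3$]{PPY}.

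For $Y$, the starting point is the algebraic independence of the $x_r$: over $\mathbb{C}$ they freely generate the polynomial algebra $\mathbb{C}[\g_e^\ast]^{G_e}$ by \cite[Theorem~4.2]{PPY}. Since formula~(\ref{xrreform}) has integer coefficients (in fact all $\pm 1$), non-vanishing of a top-size Jacobian minor at an integral test point persists to $\K$, so the generic Jacobian rank in characteristic $p$ is again $N$. Consequently $Y$ is a proper closed $G_e$-stable subvariety of $\g_e^{\ast,\textnormal{reg}}$ of codimension at least 1.

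The hardest step is to upgrade this to $\codim Y \geq 2$. The strategy is to combine the algebraic independence of the $x_r$ with the degree identity
\[
\sum_{r=1}^N d_r = \tfrac{1}{2}\bigl(\dim \g_e + N\bigr),
\]
which follows from a direct count using~(\ref{degreeinst}). In the presence of $N$ algebraically independent invariants of these exact degrees, the regular-sequence argument used in \cite[$\S 3$]{PPY} to establish freeness of $\mathbb{C}[\g_e^\ast]^{G_e}$ produces the codimension 2 bound for the Jacobian locus essentially as a byproduct; the ingredients are all characteristic-free under the standing assumption that $p$ is good, and they descend to $\K$ via the same integral reduction technique employed in the proof of Corollary~\ref{invariants}. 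The obstacle to be addressed is that, unlike in characteristic zero, one cannot invoke polynomial generation of the invariant algebra while proving the codimension 2 estimate, so the regular-sequence argument must be executed working only with the algebraic independence of the $x_r$ and the degree identity above.
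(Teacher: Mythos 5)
Your plan diverges substantially from the paper's proof, and the divergence sits precisely at the hardest step. The paper does not bound $J$ via a decomposition at all: it constructs a specific $2$-plane $\K\alpha \oplus \K\beta$ (formula~(\ref{alphabeta})), proves by hand that $t\beta \notin J$ (Lemma~\ref{beta}, via a direct evaluation on a cleverly chosen complement $U$), produces $g \in G_e$ with $\Ad^\ast(g)\alpha = \alpha + \beta$ (Lemma~\ref{alpha}), shows $J$ is stable under $G_e$ and under a grading torus $\rho(\K^\times)$ (Lemma~\ref{Jstable}), and uses these symmetries to move the whole punctured plane off $J$ (Lemma~\ref{plane1}); then since $J$ is a closed cone, missing a $2$-plane except at $0$ immediately forces $\codim J \geq 2$. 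This \emph{is} the scheme of \cite[$\S 3$]{PPY}, which the paper adopts verbatim for good reason: it is elementary and characteristic-free.

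Your proposal, by contrast, seeks to run the "general theory" part of PPY in characteristic $p$: decompose $J \subseteq \g_e^{\ast,\mathrm{sing}} \cup Y$, bound the singular locus by codimension $2$, show the $x_r$ are algebraically independent so $\codim Y \geq 1$, and then upgrade to $\codim Y \geq 2$ via the degree identity $\sum d_r = \tfrac{1}{2}(\dim\g_e + N)$. Two comments on soundness. First, the degree identity is correct (both sides equal $\sum_i i\lambda_i$), and the bound $\codim\,\g_e^{\ast,\mathrm{sing}} \geq 2$ is true and is even invoked elsewhere in the paper (proof of Theorem~\ref{zassen}, citing \cite{Yak1}); your sketch of a partition-by-partition verification is vague but the statement itself is available. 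Second, and this is the genuine gap: the upgrade from $\codim Y \geq 1$ to $\codim Y \geq 2$ is the entire content of the proposition, and your justification misattributes the tool. There is no "regular-sequence argument" in \cite[$\S 3$]{PPY}; that section contains exactly the explicit two-plane construction described above. What you appear to have in mind is the Panyushev--Yakimova freeness criterion from \cite[$\S 1$]{PPY} (algebraic independence $+$ degree sum $+$ codimension-$2$ singular locus $\Rightarrow$ free generation and small Jacobian locus). That theorem's proof over $\C$ rests on Poisson-geometric and differential-form arguments which are not established over $\K$; you acknowledge you cannot assume polynomial generation while proving the codimension estimate, but then assert the remaining machinery is "characteristic-free" without demonstrating it. This leaves the crucial step undone, and indeed the whole architecture of the present paper (reducing everything to Skryabin's Theorem~\ref{skryabin} via an explicit $2$-plane) is designed precisely to sidestep having to re-prove that criterion in positive characteristic. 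The integral-reduction remark about Jacobian minors also needs an actual integral test point whose minor survives mod $p$; producing such a point is essentially what Lemma~\ref{beta} does, so that observation does not save work.
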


In the style of \cite[$\S 3$]{PPY} we proceed by identifying a 2-dimensional plane in $\g_e^\ast$ intersecting $J$ only at 0. The proposition will then quickly follow. Calculating the differentials of our invariants polynomials explicitly is not feasible, and so we infer the necessary properties implicitly. Fix $\gamma \in \g_e^\ast$ and consider the polynomial
\begin{eqnarray*}
x_r^\gamma &:& \mathfrak{g}_e^\ast \ra \K\\
 x_r^\gamma& :&v \longmapsto x_r(\gamma + v).
\end{eqnarray*}
It is easily seen that
\begin{eqnarray}\label{xrdiff}
x_r^\gamma = \text{ constant } + d_{\gamma} x_r + \text{ higher degree polynomials in } \K[\g_e^\ast]
\end{eqnarray}

Hence in order to show that $\gamma \in \g_e^\ast \backslash J$ it will suffice to show that the linear components of $x_1^\gamma|_U,...,x_N^\gamma|_U$ are linearly independent for some appropriate choice of vector space $U \subseteq \g_e^\ast$.

Let us define
\begin{eqnarray}\label{alphabeta}
\alpha = \sum_{1 \leq i\leq n} a_i (\xi_i^{i,\lambda_i-1})^\ast \text{ and } \beta = \sum_{i=2}^{n} (\xi_{i-1}^{i,\lambda_{i}-1})^\ast
\end{eqnarray}
where the coefficients $a_i \in \K$ are subject to the constraint that $a_i = a_j$ implies $i=j$. The following lemma is essentially due to Brown and Brundan \cite[Lemma~4.2]{BB}. Their paper makes use of a basis for $\g_e$ which is defined combinatorially, and uses the reverse ordering for the Jordan block sizes. We translate their argument into the notation of our basis $\{ \xi_i^{j,s}\}$ for the reader's convenience.

\begin{lem}\label{beta}
For all $t \in \K^\times$ we have $t \beta \in \g_e^\ast \backslash J$.
\end{lem}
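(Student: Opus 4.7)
The strategy is to use (\ref{xrdiff}): since the linear-in-$v$ component of $x_r(t\beta + v)$ is precisely $d_{t\beta}x_r$, it suffices to show that $d_{t\beta}x_1,\dots,d_{t\beta}x_N$, viewed as elements of $\g_e = (\g_e^\ast)^\ast$, are linearly independent. Expanding $x_r(t\beta+v)$ via the explicit formula (\ref{xrreform}) and collecting terms of total degree one in $v$, one obtains
\[
d_{t\beta} x_r = t^{d_r-1} \sum_{(w,\mu)\in \gS_{d_r}\times \mathcal{C}_\lambda} \sgn(w) \sum_{k=1}^{d_r} \Big(\prod_{j\neq k} \xi_{i_j}^{i_{wj}, s_j}(\beta)\Big)\, \xi_{i_k}^{i_{wk}, s_k}.
\]
From the shape of $\beta = \sum_{i=2}^n (\xi_{i-1}^{i,\lambda_i-1})^\ast$, the evaluation $\xi_a^{b,s}(\beta)$ equals $1$ precisely when $b=a+1$ and $s=\lambda_{a+1}-1$, and vanishes otherwise. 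In light of $s_j = \lambda_{i_{wj}} - \lambda_{i_j} + \mu_{i_j} - 1$, the condition for a slot $j\neq k$ to contribute becomes $i_{w(j)} = i_j+1$ and $\mu_{i_j}=\lambda_{i_j}$.

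Writing $I=\{i_1<\cdots<i_{d_r}\}$ for the support of $\mu$, the first constraint for all $j\neq k$ becomes the set equation $(I+1)\setminus\{i_k+1\} = I\setminus\{i_{w(k)}\}$. A short analysis of the maximal runs of consecutive integers inside $I$ forces $I$ to be a single interval $[a,a+d-1]$ with $k=d$, $w(k)=1$, and $w$ equal to the cycle $(1\,2\,\cdots\,d)$ of sign $(-1)^{d-1}$. The constraint $\mu_{i_j}=\lambda_{i_j}$ for $j\neq k$ then forces $\mu_{a+j}=\lambda_{a+j}$ for $0\leq j\leq d-2$, and $\mu_{a+d-1} = q_a := r - (\lambda_a+\cdots+\lambda_{a+d-2})$, which must lie in $[1,\lambda_{a+d-1}]$.

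Substituting, one finds
\[
d_{t\beta} x_r = (-t)^{d-1} \sum_{a} \xi_{a+d-1}^{a,\; \lambda_a-\lambda_{a+d-1}+q_a-1},
\]
where $a$ ranges over those indices for which $q_a$ is valid. The term $a=1$ always contributes by the defining range of $d_r$, so the differential is nonzero for $t\neq 0$. Crucially, from any basis vector $\xi_i^{j,s}$ appearing in the sum one recovers $d = i-j+1$, then $a=j$, and finally $r = s + \lambda_{j+1} + \cdots + \lambda_i + 1$; hence distinct values of $r$ produce differentials whose supports, expressed in the basis $\{\xi_i^{j,s}\}$ of $\g_e$, are pairwise disjoint. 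Linear independence is then immediate.

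The main obstacle is the combinatorial step: ruling out both disjoint unions of runs for $I$ and non-extremal choices of $k$ inside a single run. The core reason is that the shift $I\mapsto I+1$ introduces one element lying outside $I$ at the top of each maximal run, while only the single element $i_k+1$ may be excised. Once that rigidity is in place, the rest of the proof is purely bookkeeping, and the reconstruction formula $r = s+\lambda_{j+1}+\cdots+\lambda_i+1$ makes the disjointness of supports transparent.
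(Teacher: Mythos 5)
Your proof is correct and reaches the same conclusion as the paper, but by a somewhat different path. The paper (following Brown--Brundan) restricts $v$ to the subspace $U = \spn\{(\xi_i^{1,s})^\ast\}$, so that $\xi_i^{j,s}(\beta+v)$ is nonzero only when $j=1$ or $j=i+1$, $s=\lambda_j-1$; this immediately forces the nonvanishing terms of $\Theta_r(w,\mu)(\beta+v)$ to have $\mu$-support equal to $\{1,\dots,d\}$ and $w$ the $d$-cycle, and the restricted differentials $x_r^\beta|_U = (-1)^{d-1}\xi_d^{1,t_r-1}|_U$ are then manifestly independent. You instead compute the unrestricted differential $d_{t\beta}x_r \in \g_e$ and carry out a full classification of the contributing $(w,\mu,k)$, which requires the extra combinatorial step of analysing maximal runs in the support $I$ (correctly concluding $I$ is an interval $[a,a+d-1]$, $k$ maximal, $w$ the $d$-cycle), then conclude via disjoint supports in the basis $\{\xi_i^{j,s}\}$, with the reconstruction $r = s + \lambda_{j+1}+\cdots+\lambda_i + 1$ making disjointness transparent. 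Both arguments are sound; the paper's choice of $U$ trivialises the combinatorics at the cost of fixing a coordinate subspace, while yours yields the stronger (and rather pleasant) explicit formula $d_{t\beta}x_r = (-t)^{d_r-1}\sum_a \xi_{a+d_r-1}^{a,\,\lambda_a-\lambda_{a+d_r-1}+q_a-1}$ for the full differential, at the cost of the run analysis. Two small remarks: the paper treats the degenerate case $n=1$ (where $\beta=0$) separately, whereas your formula subsumes it, since for $d_r=1$ it reduces to $d_{t\beta}x_r = x_r = \sum_{a:\,r\le\lambda_a}\xi_a^{a,r-1}$ and the disjointness argument still applies; and the validity constraint $q_a\in[1,\lambda_{a+d-1}]$ is exactly what keeps $\xi_{a+d-1}^{a,\cdot}$ inside the legal range under the paper's convention that out-of-range $\xi$'s are zero, so the bookkeeping is consistent.
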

\begin{proof}
Since $J$ is a cone it shall suffice to prove the lemma in case $t=1$. Let $$U = \spn\{(\xi_i^{1,s})^\ast: 1 \leq i \leq n, \lambda_1 - \lambda_i \leq s < \lambda_1\}\subseteq \g_e^\ast$$ and allow
\begin{eqnarray*}
v = \sum_{i=1}^n \sum_{s=\lambda_1 - \lambda_i}^{\lambda_1 - 1} c_{i,s}( \xi_i^{1,s} )^\ast
\end{eqnarray*}
to be an arbitrary element of $U$, with $c_{i,s} \in \K$. We have
\begin{eqnarray}\label{xievaluate}
\xi_i^{j,s}(\beta + v) = \left\{ \begin{array}{ll}
         1 & \mbox{ if $i=j-1$ and $s = \lambda_j -1$}\\
        c_{i,s} & \mbox{ if $j = 1$}\\
        0 & \mbox{ otherwise} \end{array} \right.
\end{eqnarray}

Observe that if $1 \leq r \leq \lambda_1$ then $x_r = \sum_i \xi_i^{i, r-1}$ so that $x_r( \beta + v) = c_{1,r-1}$ and $x_r^{\beta}|_U= \xi_1^{1,r-1}$. If $n=1$ then $\lambda_1 = N$ and we have shown that the linear terms of the $x_1^{\beta}|_U, ..., x_N^{\beta}|_U$ are linearly independent and by formula (\ref{xrdiff}) we are done.

Assume not, so that we may choose $\lambda_1 < r \leq N$ and by the definition of the sequence of degrees we have $d := d_r > 1$. Let us assume that $(w,\mu) \in \gS_d \times\mathcal{C}_\lambda$ is a pair such that $$\Theta_r(w,\mu)(\beta + v) = (\sgn(w) \xi_{i_1}^{i_{w1}, s_1} \cdots \xi_{i_d}^{i_{wd}, s_d})(\beta + v) \neq 0.$$ We require that $\xi_{i_k}^{i_{wk}, s_k}(\beta + v) \neq 0$ for all $k=1,...,d$. 

For any $w \in \gS_d$ we can be sure that $wk \leq k$ for some $k \in \{1,...,d\}$. Fix such a $k$. Due to (\ref{xievaluate}) we have $i_{wk} = 1$. Clearly $wl \neq 1$ for $l \neq k$ so, again by (\ref{xievaluate}), $i_{wl} = i_l + 1$ for all $l \neq k$. This gives us $i_{wk} = 1$, $i_{w^2k} = 2$, $i_{w^3k} = 3,...$ and $i_{w^dk} = i_{k} = d$. Furthermore by (\ref{xievaluate}) we must have $\mu_1 = \lambda_1, \mu_2 = \lambda_2, ..., \mu_{d-1} = \lambda_{d-1}$. Since $|\mu | = r$ and $l(\mu) = d$ we have $\mu_d = r - \sum_{j=1}^{d-1} \lambda_j$. The upshot is that $w$ must be the $d$-cycle $(123\cdots d)$ and that $$\mu = (\lambda_1, \lambda_2,..., \lambda_{d-1}, r - \sum_{j=1}^{d-1} \lambda_j, 0, 0,..., 0).$$

We make the notation $t_r = r -\sum_{j=1}^{d-1} \lambda_j \in \{1,...,\lambda_d\}$. Since $\sgn(123\cdots d) = (-1)^{d-1}$ we have shown that $x_r(\beta +v) = (-1)^{d-1} c_{d, t_r-1}$. Thus $$x_r^{\beta}|_U = (-1)^{d-1} \xi_d^{1,t_r-1}|_U$$ for all $r=1,...,N$. These linear functions are clearly linearly independent and by formula (\ref{xrdiff}) the lemma is proven.
\end{proof}

For the next lemma we shall need to make use of a decomposition of $\g_e$ similar to the well known triangular decomposition of $\g$. We define 
\begin{eqnarray*}
\mathfrak{n}^- &:=& \spn\{\xi_i^{j,s} : i < j\} \\
\smallskip
\mathfrak{h} &:=& \spn\{\xi_i^{j,s} : i = j\} \\
\smallskip
\mathfrak{n}^+ &:=& \spn\{\xi_i^{j,s} : i > j\}
\end{eqnarray*}
where $\lambda_j - \min(\lambda_i, \lambda_j) \leq s < \lambda_j$ in all three of the above. If we order the basis $\{e^s w_i\}$ so that $e$ is in Jordan normal form then $\mathfrak{n}^-$ is strictly lower triangular and $\mathfrak{n}^+$ is strictly upper triangular, and $\g_e = \n^- \oplus \hh \oplus \n^+$. Of course $\hh$ is not actually a torus unless $e = 0$, however it is proven over $\C$ in \cite[$\S 5$]{Yak1} that $\hh$ is a generic stabaliser in $\g_e$, and that $(\g_e)_\alpha = \hh$. We shall see later that this generic stabaliser also exists when we work over $\K$ (Theorem~\ref{genstabal}). We remark the definition of $\alpha$ actually originates in \cite{Yak1}.

There is a dual decomposition $$\g_e^\ast = (\mathfrak{n}^-)^\ast \oplus \mathfrak{h}^\ast \oplus (\mathfrak{n}^+)^\ast$$ where $\hh^\ast$ is defined to be the annihilator of $\n^-\oplus \n^+$ in $\g_e^\ast$, and similar for $(\n^-)^\ast$ and $(\n^+)^\ast$. We have $\alpha \in \mathfrak{h}^\ast$ and $\beta \in (\mathfrak{n}^-)^\ast$.

\begin{lem}\label{alpha}
There exists $g \in G_e$ such that \emph{Ad}$^\ast(g) \alpha = \alpha + \beta$.
\end{lem}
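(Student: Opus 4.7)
The goal is to exhibit $g \in G_e$ with $\Ad^\ast(g)\alpha = \alpha + \beta$. My plan is to build $g$ as an ordered product $g = \exp(y_{n-1}) \cdots \exp(y_1)$ of truncated exponentials of nilpotent elements $y_k \in \mathfrak{n}^+$, each supported on basis vectors $\xi_i^{j,s}$ of block-index gap $i - j$ equal to $k$. The $y_k$'s will be determined by a gap-by-gap inductive correction.

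The central input is the linear map $\phi\colon \g_e \to \g_e^\ast$, $\phi(y) = \ad^\ast(y)\alpha$. Using $[\xi_i^{j,s}, \xi_k^{l,r}] = \delta_{i,l}\xi_k^{j,r+s} - \delta_{k,j}\xi_i^{l,r+s}$ and the formula for $\alpha$, a direct computation yields
\[
\ad^\ast(\xi_i^{j,s})\alpha \;=\; \sum_r \bigl(a_j\delta_{r+s,\lambda_j-1} - a_i\delta_{r+s,\lambda_i-1}\bigr)(\xi_j^{i,r})^\ast,
\]
which shows $\phi(\mathfrak{n}^+) \subseteq (\mathfrak{n}^-)^\ast$ and that $\phi$ preserves the gap grading. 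Combined with the generic-stabiliser property $(\g_e)_\alpha = \hh$ (valid thanks to pairwise distinctness of the $a_k$, to be established cleanly in Theorem~\ref{genstabal}) and $\dim \mathfrak{n}^+ = \dim \mathfrak{n}^-$, this forces $\phi\colon \mathfrak{n}^+ \to (\mathfrak{n}^-)^\ast$ to be a linear isomorphism on each gap stratum. In particular there is a unique $y_1 \in \mathfrak{n}^+$ of gap $1$ with $\phi(y_1) = \beta$, obtained by solving a triangular linear system in the basis $\{\xi_i^{i-1,s}\}$ whose pivots are nonzero expressions in the $a_k$'s.

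Setting $g_1 = \exp(y_1) \in G_e$ (well-defined by nilpotency of $y_1$), one has $\Ad^\ast(g_1)\alpha = \alpha + \beta + R_1$ where $R_1 = \sum_{m \geq 2}(\ad^\ast(y_1))^m\alpha/m!$. The companion identity
\[
\ad^\ast(\xi_i^{j,s})(\xi_p^{q,r})^\ast \;=\; -\delta_{q,j}(\xi_p^{i,r-s})^\ast + \delta_{i,p}(\xi_j^{q,r-s})^\ast
\]
implies that $\ad^\ast$ of a gap-$k$ element raises block-index gap on $(\mathfrak{n}^-)^\ast$ by exactly $k$, so $R_1$ is supported at gap $\geq 2$. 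Inductively, assuming $\Ad^\ast(g_{k-1})\alpha = \alpha + \beta + R_{k-1}$ with $R_{k-1}$ at gap $\geq k$, I would let $y_k$ be the unique gap-$k$ element with $\phi(y_k) = -R_{k-1}|_{\mathrm{gap}\,k}$ and set $g_k = \exp(y_k)g_{k-1}$. The gap-raising property then guarantees $R_k := \Ad^\ast(g_k)\alpha - \alpha - \beta$ is supported at gap $\geq k+1$. Since the maximum block-index gap in $\mathfrak{n}^+$ is $n-1$, the induction terminates at $k = n-1$, producing $g := g_{n-1}$ satisfying $\Ad^\ast(g)\alpha = \alpha + \beta$.

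The main technical obstacle is the verification of the gap-raising property of $\ad^\ast$ and the careful bookkeeping of the residuals $R_k$; both come down to direct manipulations with the two displayed identities above, and the inductive structure closes precisely because the gap filtration on $\mathfrak{n}^+$ is bounded by $n-1$.
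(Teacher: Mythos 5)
Your proposal takes a genuinely different route from the paper. Both proofs hinge on the fact that $\ad^\ast(\mathfrak{n}^+)\alpha = (\mathfrak{n}^-)^\ast$, but the paper then appeals to Rosenlicht's theorem to conclude that the orbit $\Ad^\ast(N^+)\alpha$ is a closed subvariety of $\alpha + (\mathfrak{n}^-)^\ast$ of full dimension, hence equal to it; your proof instead constructs the group element $g$ explicitly, gap stratum by gap stratum. The constructive approach is more elementary (no geometric input), and the inductive bookkeeping via the $a$-weight filtration is essentially correct.

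However, there is a genuine gap in your argument: the exponentials $\exp(y_k)$ are not defined in the relevant generality. The element $y_1 \in \mathfrak{n}^+$ of gap $1$ is nilpotent of order up to $n$ (the number of Jordan blocks), and the paper works in good characteristic, which for type $A$ imposes \emph{no} lower bound on $p$. Thus $p \leq n-1$ is entirely possible, in which case $\exp(y_1) = \sum_m y_1^m/m!$ involves division by $p!$ and is undefined; ``truncating'' does not help, since the truncated exponential is no longer multiplicative and the identity $R_1 = \sum_{m\geq 2}(\ad^\ast(y_1))^m\alpha/m!$, which again features factorials, breaks down. The fix is readily available from the paper's own setup: replace $\exp(y_k)$ throughout by $1 + y_k$. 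The set $1 + \mathfrak{n}^+$ is a closed subgroup of $G_e$ (it is closed under multiplication and inversion since $\mathfrak{n}^+$ is a strictly upper-triangular subalgebra commuting with $e$), and one still has $\Ad^\ast(1+y_k)\gamma = \gamma + \ad^\ast(y_k)\gamma + (\text{higher-order terms})$ where the higher-order terms raise the gap by at least $2k$. With this substitution your gap-by-gap induction closes as you describe. A secondary, minor issue: you justify $\ker\phi\cap\mathfrak{n}^+ = 0$ by invoking $(\g_e)_\alpha = \hh$ from Theorem~\ref{genstabal}, which appears later in the paper; to avoid any ordering concerns you should instead verify directly from the explicit formula for $\ad^\ast(\xi_i^{j,s})\alpha$ that the images are linearly independent, as the paper does.
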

\begin{proof}
Since each $v \in \mathfrak{n}^+$ is nilpotent, the translation morphism $v \ra 1 + v$ takes $v \in \mathfrak{n}^+$ to a unipotent matrix in $G_e$. We denote the subgroup generated by all $1 + v$ with $v \in \mathfrak{n^+}$ by $N^+$. It is easily checked that $1 + \mathfrak{n}^+$ is closed under matrix multiplication. Due to formula~(4) in the proof of Corollary~\ref{invariants} the set $1 + \mathfrak{n}^+$ is also closed under the map $g \mapsto g^{-1}$. Hence $N^+ = 1 + \mathfrak{n}^+$. We aim to prove that $\Ad^\ast(N^+) \alpha = \alpha + (\mathfrak{n}^-)^\ast$, from which our proposition will quickly follow. The one dimensional subspaces $\{1 + t \xi_i^{j,s} : t \in \K\}$ with $i > j$ generate $N^+$. Since $i > j$ we see that $(1 + t\xi_i^{j,s})^{-1} = 1 - t \xi_i^{j,s}$ (this is observed in the proof of Corollary~\ref{invariants}). A quick calculation then shows that
\begin{eqnarray*}
\Ad^\ast(1 + t \xi_i^{j,s}) \alpha = \alpha + t(a_j(\xi_j^{i, \lambda_j - 1- s})^\ast - a_i(\xi_j^{i, \lambda_i - 1- s})^\ast) \in \alpha + (\mathfrak{n}^-)^\ast
\end{eqnarray*}
The conditions on the $a_i$ ensure that the linear forms $\{a_j(\xi_j^{i, \lambda_j - 1- s})^\ast - a_i(\xi_j^{i, \lambda_i - 1- s})^\ast : i > j \}$ are linearly independent, hence span $(\mathfrak{n}^-)^\ast$. We see that $\dim(\Ad^\ast(N^+) \alpha) = \dim(\alpha + (\mathfrak{n}^-)^\ast$). Thanks to \cite[Theorem~2]{Ros} we know that $\Ad^\ast(N^+) \alpha$ is a closed subvariety of $\alpha + (\mathfrak{n}^-)^\ast$. The dimensions coincide and so we have equality $\Ad^\ast(N^+) \alpha = \alpha + (\mathfrak{n}^-)^\ast$. Now $\beta \in (\mathfrak{n}^-)^\ast$ so there exists some $g \in N^+$ such that $\Ad^\ast(g) \alpha = \alpha + \beta$ as required.
\end{proof}

Let $a : \K^\times \ra G_e$ be the cocharacter given by $a(t)w_i = t^i w_i$. Define a rational linear action $\rho : \K^\times \ra GL(\g_e^\ast)$ by
\begin{eqnarray*}
\rho(t) \gamma = t \Ad^\ast (a(t)) \gamma
\end{eqnarray*}
where $\gamma \in \g_e^\ast$ and $t \in \K^\times$. Clearly we have $\rho(t) (\xi_i^{j,s})^\ast = t^{i-j+ 1} (\xi_i^{j,s})^\ast$.

\begin{lem}\label{Jstable}
The Jacobian locus $J$ is
\begin{enumerate}
\item{$G_e$-stable;}
\smallskip
\item{$\rho(\K^\times)$-stable.}
\end{enumerate}
\end{lem}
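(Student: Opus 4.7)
The plan is to establish both stabilities by a direct application of the chain rule, using only the $G_e$-invariance (Corollary~\ref{invariants}) and the homogeneity of the $x_r$.

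For part (1), I would start from the identity $x_r(\Ad^\ast(g)\gamma) = x_r(\gamma)$, valid for every $g \in G_e$ and $\gamma \in \g_e^\ast$. Differentiating both sides at $\gamma$ yields
$$d_{\Ad^\ast(g)\gamma}\, x_r \circ \Ad^\ast(g) = d_\gamma x_r$$
as linear functionals on $\g_e^\ast$. Because $\Ad^\ast(g) : \g_e^\ast \to \g_e^\ast$ is a linear isomorphism, precomposition with it sends any linearly (in)dependent family in $(\g_e^\ast)^\ast$ to a linearly (in)dependent family. Consequently $\{d_\gamma x_r\}_{r=1}^N$ and $\{d_{\Ad^\ast(g)\gamma}\, x_r\}_{r=1}^N$ have the same rank, so $\gamma \in J$ if and only if $\Ad^\ast(g)\gamma \in J$.

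For part (2), I would exploit that $\rho(t) = t\cdot \Ad^\ast(a(t))$ combines two operations, each of which preserves $J$. First, $a(t) \in G_e$, so by part~(1) the factor $\Ad^\ast(a(t))$ preserves $J$. Second, scaling by $t \in \K^\times$ preserves $J$ because each $x_r$ is homogeneous of degree $d_r$: from $x_r(t\gamma)= t^{d_r} x_r(\gamma)$ the chain rule gives $d_{t\gamma}\, x_r = t^{d_r - 1} d_\gamma x_r$, so any linear relation $\sum_r c_r d_\gamma x_r = 0$ translates into the nontrivial relation $\sum_r c_r t^{1-d_r} d_{t\gamma}\, x_r = 0$, and vice versa. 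Combining the two, $\rho(\K^\times)$ preserves $J$. (Alternatively, one may proceed in one step: from $x_r(\rho(t)\gamma) = t^{d_r} x_r(\gamma)$ differentiate to obtain $d_{\rho(t)\gamma}\, x_r \circ \rho(t) = t^{d_r} d_\gamma x_r$, and invoke that $\rho(t)$ is a linear automorphism and $t^{d_r} \neq 0$.)

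There is no real obstacle here: everything reduces to the chain rule applied to identities that were already secured earlier in the paper. The only minor care is to keep straight the direction of composition and to note that the scalars $t^{d_r}$ appearing in the homogeneity calculation are all nonzero, so that linear (in)dependence of the differentials is indeed preserved term by term.
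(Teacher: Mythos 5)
Your proof is correct. Part~(1) is essentially identical to the paper's argument: differentiate the invariance identity $x_r(\Ad^\ast(g)\gamma) = x_r(\gamma)$ at $\gamma$ and use that $\Ad^\ast(g)$ is a linear automorphism to transport linear (in)dependence of the differentials. For part~(2) you take a genuinely different and somewhat leaner route. The paper re-computes from the monomial expansion that $\Theta_r(w,\mu)\circ\rho(t) = t^{d_r}\Theta_r(w,\mu)$, deduces $x_r\circ\rho(t) = t^{d_r}x_r$, and then applies the same chain-rule argument. You instead factor $\rho(t) = t\cdot\Ad^\ast(a(t))$ (which is literally the definition given in the paper), dispose of the $\Ad^\ast(a(t))$ factor by appealing to part~(1) (legitimate because the cocharacter $a$ lands in $G_e$), and dispose of the scalar factor by homogeneity of each $x_r$, carefully noting the scalars $t^{d_r-1}$ are nonzero so linear relations are preserved in both directions. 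This buys you a small economy — no need to recompute the $\rho(t)$-weight of each monomial — at the cost of no extra hypotheses, since homogeneity of $x_r$ and $a(\K^\times)\subseteq G_e$ are both already recorded in the text. Your parenthetical one-step alternative reproduces the paper's route; either works.
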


\begin{proof}
Since $x_r$ is $G_e$-invariant
\begin{eqnarray*}
x_r(\Ad^\ast (g)(\gamma + \delta)) = x_r (\gamma + \delta)
\end{eqnarray*}
for all $g \in G_e$ and $\gamma,\delta \in \g_e^\ast$. This equates to $x_r^{\Ad^\ast(g)\gamma} \circ (\Ad^\ast g) = x_r^\gamma$. The linear part of the left hand side of this equation is $d_{\Ad^\ast(g)\gamma} x_r \circ (\Ad^*g)$ and the same of the right hand side is $d_\gamma x_r$. As $\Ad^\ast (g)$ is invertible, the dimension of the linear span of $d_{\Ad^\ast(g)\gamma} x_1,..., d_{\Ad^\ast(g)\gamma} x_N$ equals that of the $d_{\gamma} x_1,..., d_{\gamma} x_N$, whence 1.

Turning our attention to $\rho(\K^\times)$, fix $t \in \K^\times$, $r = 1,...,N$, $(w,\mu) \in \gS_d \times \mathcal{C}_\lambda$ and observe that
\begin{eqnarray*}
\Theta_r(w,\mu) \circ \rho(t) &=& (\sgn(w) \xi_{i_1}^{i_{w1}, s_1} \cdots \xi_{i_d}^{i_{wd}, s_d})\circ \rho(t) \\
&=& (\prod_{k=1}^d t^{i_k - i_{wk} +1})\sgn(w)\xi_{i_1}^{i_{w1}, s_1} \cdots \xi_{i_d}^{i_{wd}, s_d}\\
& =& t^{d}(\sgn(w) \xi_{i_1}^{i_{w1}, s_1} \cdots \xi_{i_d}^{i_{wd}, s_d}) = t^d \Theta_r(w, \mu).
\end{eqnarray*}
So that $x_r \circ \rho(t) = t^{d} x_r$. Next let $\gamma, v \in \g_e^\ast$ and observe that $$x_r(\rho(t)\gamma + v) = x_r\circ \rho(t) (\gamma + \rho(t)^{-1} v) = t^d x_r(\gamma + \rho(t)^{-1}v)$$ which is written as $x_r^{\rho(t)\gamma} = t^d x_r^\gamma\circ \rho(t)^{-1}$ in our notations. We conclude that the linear terms must coincide, so that $$d_{\rho(t)\gamma} x_r = t^d d_{\gamma} x_r \circ \rho(t)^{-1}.$$ However, $\rho(t)^{-1}$ is evidently invertible so 2 follows.
\end{proof}

\begin{lem}\label{plane1}
$(\K \alpha \oplus \K\beta) \cap J = 0$.
\end{lem}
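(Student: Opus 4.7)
The plan is to prove $(\K\alpha \oplus \K\beta) \cap J = 0$ by contradiction, leveraging the two stability properties from Lemma~\ref{Jstable} together with the non-vanishing result of Lemma~\ref{beta}. The strategy is to use $\rho(\K^\times)$-stability to ``strip off'' the $\alpha$-component from any alleged point of $J$, reducing to a point of the form $t\beta$ which Lemma~\ref{beta} forbids.

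First I would record how $\rho$ acts on the two distinguished covectors. Since $\rho(u)(\xi_i^{j,s})^\ast = u^{i-j+1}(\xi_i^{j,s})^\ast$, the definition (\ref{alphabeta}) gives $\rho(u)\alpha = u\alpha$ (each summand $(\xi_i^{i,\lambda_i-1})^\ast$ has $\rho$-weight $1$) and $\rho(u)\beta = \beta$ (each summand $(\xi_{i-1}^{i,\lambda_i-1})^\ast$ has $\rho$-weight $0$). Consequently $\rho(u)(s\alpha + t\beta) = us\alpha + t\beta$ for all $u \in \K^\times$. I would also note that $J$ is Zariski-closed, since it is cut out by the simultaneous vanishing of the $N \times N$ minors of the Jacobian matrix of $(x_1, \ldots, x_N)$.

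Now suppose $\gamma = s\alpha + t\beta \in J$ with $(s,t) \neq (0,0)$. In the main case $t \neq 0$, Lemma~\ref{Jstable}(2) yields $us\alpha + t\beta \in J$ for every $u \in \K^\times$; since $J$ is closed, letting $u \to 0$ gives $t\beta \in J$, contradicting Lemma~\ref{beta}. In the remaining case $t = 0$ (so $s \neq 0$), I invoke Lemma~\ref{alpha} to find $g \in G_e$ with $\Ad^\ast(g)\alpha = \alpha + \beta$; by linearity $\Ad^\ast(g)(s\alpha) = s\alpha + s\beta$, which by Lemma~\ref{Jstable}(1) lies in $J$, and since $s \neq 0$ the preceding case applies and again contradicts Lemma~\ref{beta}.

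I do not expect any serious obstacle: the proof is essentially a two-line assembly of the three preceding lemmas, the only genuine content being the observation that $\alpha$ and $\beta$ are $\rho$-eigenvectors of different weights, which is what allows the orbit-closure argument to separate them. The closedness of $J$ (needed to take the limit $u \to 0$) is immediate from its description as a determinantal variety.
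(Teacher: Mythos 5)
Your proposal is correct and follows essentially the same route as the paper: it reduces the $t=0$ case to the $t\neq 0$ case via Lemma~\ref{alpha} and $G_e$-stability of $J$, and then combines $\rho(\K^\times)$-stability of $J$, the Zariski-closedness of $J$, and Lemma~\ref{beta} to handle the case $t\neq 0$. The paper phrases this last step contrapositively (two nonempty open subsets of the line $\K\alpha + t\beta$ must intersect, so $\gamma$ lies in $\g_e^\ast\setminus J$), while you phrase it as a contradiction by taking the orbit closure, but the mathematical content is identical.
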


\begin{proof}
Let $t_1, t_2 \in \K$ and $\gamma = t_1 \alpha + t_2 \beta \neq 0$. We shall show that $\gamma \in \g_e^\ast \backslash J$. If $t_2 = 0$ then the element $g \in G_e$ constructed in Lemma~\ref{alpha} sends $\gamma$ to $t_1\alpha + t_1\beta$ so by part 1 of Lemma~\ref{Jstable} it suffices to prove that $\gamma \in \g_e^\ast \backslash J$ whenever $t_2 \neq 0$. By Lemma~\ref{beta} we may also assume that $t_1 \neq 0$.

It is clear that $\rho(t)\alpha = t \alpha$ and $\rho(t) \beta = \beta$. Consider the variety $\K \alpha + t_2\beta$. Since $J$ is closed it follows that $(\g_e^\ast \backslash J) \cap (\K \alpha + t_2\beta)$ is a Zariski open subset of $\K \alpha + t_2\beta$. By Lemma~\ref{beta} that intersection is non-empty. We deduce that $$\rho(\K^\times)\gamma \cap (\g_e^\ast \backslash J) = (\K^\times\alpha + t_2\beta) \cap (\g_e^\ast \backslash J)  \neq \emptyset.$$ By part 2 of Lemma~\ref{Jstable}, $\g_e^\ast \backslash J$ is $\rho(\K^\times)$-stable implying $\gamma \in \g_e^\ast \backslash J$ as required.
\end{proof}

We are now ready to prove Proposition~\ref{Codimension 2}.
\begin{proof}
The Jacobian locus is conical and Zariski closed. Apply the above lemma.
\end{proof}

\subsection{The Symplectic and Orthogonal Cases}\label{JacLoc2}

In this section we aim to prove an analogue of Proposition~\ref{Codimension 2} for centralisers in other classical types.
\begin{prop}\label{jaclocsymorth}The following are true:
\begin{enumerate}
\item{Let $\epsilon = -1$ so that $K$ is of type $C$. Then \emph{codim}$_{\h_e^\ast} J_{\h_e^\ast}(x_{r}: r $ even$) \geq 2$;}
\smallskip
\item{Let $\epsilon = 1$ so that $K$ is of type $B$ or $D$. Then \emph{codim}$_{\pp_e^\ast} J_{\pp_e^\ast}(x_{r}: r + d_r $ even$) \geq 2$.}
\end{enumerate}
\end{prop}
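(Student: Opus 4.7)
The plan is to reduce both statements to Proposition~\ref{Codimension 2} by mimicking its 2-plane argument inside the subspaces $\h_e^\ast \subseteq \g_e^\ast$ (for type C) and $\pp_e^\ast \subseteq \g_e^\ast$ (for type B/D). The entry point is a parity observation extracted from Proposition~\ref{sigmaxr}: for any $\gamma$ lying in a $\sigma^\ast$-eigenspace of $\g_e^\ast$, the gradient $d_\gamma x_r \in \g_e$ lies in a definite $\sigma$-eigenspace. A Taylor expansion of the identity $x_r(\sigma^\ast(\gamma + v)) = (-1)^r x_r(\gamma + v)$, in the spirit of the proof of Corollary~\ref{vanishing1}, yields: for $\gamma \in \h_e^\ast$ and $r$ even, $d_\gamma x_r \in \h_e$; for $\gamma \in \pp_e^\ast$ and $r + d_r$ even, $d_\gamma x_r \in \pp_e$. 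Since $\g_e = \h_e \oplus \pp_e$, linear independence of the full family $\{d_\gamma x_r\}_{r=1}^N$ at such $\gamma$ splits into independence of the ``even'' and ``odd'' subfamilies inside $\h_e$ and $\pp_e$ respectively, whence
\[
J_{\h_e^\ast}(x_r : r \text{ even}) \subseteq J \cap \h_e^\ast \quad \text{and} \quad J_{\pp_e^\ast}(x_r : r + d_r \text{ even}) \subseteq J \cap \pp_e^\ast.
\]
It therefore suffices to produce a $2$-plane inside $\h_e^\ast$ (resp.\ $\pp_e^\ast$) meeting the type A Jacobian locus $J$ only at the origin, after which conicality upgrades this to codimension $\geq 2$ exactly as in the closing argument of Proposition~\ref{Codimension 2}.

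To construct the 2-plane we modify the linear forms $\alpha, \beta$ of~(\ref{alphabeta}) so that they lie in the correct $\sigma^\ast$-eigenspace. Using~(\ref{sigmaaction}) one computes $\sigma^\ast\alpha = \sum_i (-1)^{\lambda_i} a_{i'}\,(\xi_i^{i,\lambda_i-1})^\ast$, so $\alpha \in \h_e^\ast$ precisely when $a_i = (-1)^{\lambda_i} a_{i'}$ for all $i$, and $\alpha \in \pp_e^\ast$ when the opposite sign holds; in both relevant cases Lemma~\ref{nilpotents} reduces the constraint to $a_i = -a_{i'}$ on the pairs with $i \neq i'$, and the distinctness condition required by Lemma~\ref{alpha} can still be arranged because $\chr(\K) \neq 2$. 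For $\beta$ we take $\beta_+ := \beta + \sigma^\ast\beta \in \h_e^\ast$ (for part (1)) or $\beta_- := \beta - \sigma^\ast\beta \in \pp_e^\ast$ (for part (2)); both are nonzero because $\beta$ and $\sigma^\ast\beta$ are supported on disjoint basis coordinates of $\g_e^\ast$. With these choices the analogues of Lemmas~\ref{beta}, \ref{alpha}, \ref{Jstable}, and \ref{plane1} adapt: Lemma~\ref{Jstable} is unchanged, the orbit argument of Lemma~\ref{alpha} via $N^+ = 1 + \n^+$ still yields $\Ad^\ast(N^+)\alpha = \alpha + (\n^-)^\ast \ni \alpha + \beta_\pm$, and the analogue of Lemma~\ref{beta} is proved by evaluating the $\xi_i^{j,s}$ on $\beta_\pm + v$ with $v$ varying in the $\sigma^\ast$-eigenspace component of the original subspace $U$. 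Lemma~\ref{plane1} then closes the argument, and part (2) is entirely parallel after replacing $\h_e^\ast$ by $\pp_e^\ast$ and $\beta_+$ by $\beta_-$.

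I expect the main obstacle to be the analogue of Lemma~\ref{beta}: one must verify that adjoining the twin summand $\sigma^\ast \beta$ does not disrupt the combinatorial rigidity that in the original proof forces each $x_r(\beta + v)$ to collapse to a unique long-cycle monomial in $\{(\xi_i^{1,s})^\ast\}$. The support of $\sigma^\ast \beta$ is governed by the involution $i \mapsto i'$, so additional contributing pairs $(w,\mu) \in \gS_d \times \mathcal{C}_\lambda$ appear and must either add constructively or be accounted for using the pairing $(w,\mu) \mapsto (w',\mu')$ constructed inside the proof of Proposition~\ref{sigmaxr}; the expected outcome is that the long cycles on one half of the partition still dominate, so that the linear parts of the $x_r^{\beta_\pm}\!\mid_U$ remain linearly independent.
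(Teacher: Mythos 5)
Your initial reduction is correct and is essentially the content of Lemmas~\ref{dees} and~\ref{jal} in the paper: the Taylor-expansion argument shows $d_\gamma x_r\in\h_e$ for $\gamma\in\h_e^\ast$ and $r$ even (resp. $d_\gamma x_r\in\pp_e$ for $\gamma\in\pp_e^\ast$ and $r+d_r$ even), giving $J_{\h_e^\ast}(x_r : r\text{ even})\subseteq J\cap\h_e^\ast$ and its orthogonal twin. Your choice of $2$-plane is also the right one up to a harmless rescaling: the paper's $\bar\beta=\beta+\beta'$ agrees with $\beta\pm\sigma^\ast\beta$ except that the summands at indices $i$ with $i+1=i'$ are not doubled (and, incidentally, $\beta$ and $\sigma^\ast\beta$ are \emph{not} disjointly supported --- they collide precisely at such $i$, though the coefficients do not cancel, so $\beta_\pm\neq 0$ still holds).

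The genuine gap is in showing the plane avoids $J$. First, the claim $\Ad^\ast(N^+)\alpha=\alpha+(\mathfrak{n}^-)^\ast\ni\alpha+\beta_\pm$ is false: for any $i$ with $i+1\neq i'$ the summand $(\xi_{(i+1)'}^{i',\lambda_i-1})^\ast$ of $\sigma^\ast\beta$ has $(i+1)'>i'$ (since $(i+1)'=i$ would force $i'=i+1$, which is excluded), so it lies in $(\mathfrak{n}^+)^\ast$, and hence $\beta_\pm\notin(\mathfrak{n}^-)^\ast$. Second, and more seriously, you propose to re-prove the combinatorial Lemma~\ref{beta} directly for $\beta_\pm$; you rightly flag that the extra strictly upper-triangular support wrecks the collapse of each $x_r(\beta+v)$ to a unique long-cycle monomial, and it is far from clear that this can be repaired. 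The paper never reopens the combinatorics. Instead it observes that the rational action $\rho(t)$ from part~2 of Lemma~\ref{Jstable} scales the $(\mathfrak{n}^+)^\ast$-part of $\bar\beta$ by $t^{k_i}$ with $k_i=(i+1)'-i'+1\geq 2$, fixes $\beta$, and scales $\alpha$ by $t$, so $\rho(t)(t_1\alpha+t_2\bar\beta)$ tends to $t_2\beta$ as $t\to 0$. Since $J$ is Zariski closed and $\rho(\K^\times)$-stable, and $t_2\beta\notin J$ by the original Lemma~\ref{beta}, one concludes $t_1\alpha+t_2\bar\beta\notin J$ outright (this is Lemma~\ref{plane2}). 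That degeneration argument, not a re-run of Lemma~\ref{beta}, is what closes the plane step, and it renders both of the steps you were worried about unnecessary.
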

As was discussed in the introduction, the proofs of parts 1 and 2 shall be identical. We shall supply all details of the proof of part 1, whilst our proof of part 2 shall simply consist of a description of necessary changes to that proof. As an immediate corollary to Proposition~\ref{jaclocsymorth} we obtain.
\begin{cor}\label{nonzero1} The following are true:
\begin{enumerate} 
\item{If $\epsilon = -1$ then the restrictions $x_r|_{\h_e^\ast}$ with $r$ even are non-zero and distinct;}
\smallskip
\item{If $\epsilon = 1$ then the restrictions $x_r|_{\pp_e^\ast}$ with $r+d_r$ even are non-zero and distinct.}
\end{enumerate}
\end{cor}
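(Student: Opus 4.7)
The plan is to deduce both statements directly from the codimension bound in Proposition~\ref{jaclocsymorth}. The key observation is that if a Jacobian locus $J_U(f_1,\ldots,f_k)$ is a proper subvariety of $U$ (so in particular codimension $\geq 1$), then at a generic $\gamma \in U$ the differentials $d_\gamma f_1|_U,\ldots,d_\gamma f_k|_U$ are linearly independent; this is strictly weaker than the codimension 2 bound we have available, but it is all we need here.

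First I would treat part (1). By Proposition~\ref{jaclocsymorth}(1), the Jacobian locus $J_{\h_e^\ast}(\{x_r|_{\h_e^\ast} : r \text{ even}\})$ has codimension $\geq 2$ in $\h_e^\ast$, hence is properly contained in $\h_e^\ast$. Choose any $\gamma$ outside this locus; then the differentials $\{d_\gamma(x_r|_{\h_e^\ast}) : r \text{ even}\}$ are linearly independent in $(\h_e^\ast)^\ast$. This rules out both possible failure modes: if some $x_r|_{\h_e^\ast}$ were identically zero then its differential would vanish at $\gamma$, contradicting the linear independence; and if $x_r|_{\h_e^\ast} = x_s|_{\h_e^\ast}$ for distinct even indices $r \neq s$ then $d_\gamma(x_r|_{\h_e^\ast}) = d_\gamma(x_s|_{\h_e^\ast})$, again contradicting linear independence.

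Part (2) follows by exactly the same reasoning, using Proposition~\ref{jaclocsymorth}(2) in place of (1): the set $\{x_r|_{\pp_e^\ast} : r + d_r \text{ even}\}$ has a Jacobian locus of codimension $\geq 2$ in $\pp_e^\ast$, so a generic point gives linearly independent differentials, forcing each restriction to be nonzero and all restrictions to be pairwise distinct.

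There is no genuine obstacle here; the corollary is a formal consequence of the proposition. The only point requiring a sentence of justification is the implication \emph{proper Jacobian locus $\Rightarrow$ non-vanishing and distinctness}, which is immediate because a zero polynomial, or an equality between two polynomials in the list, would propagate a linear dependence among the differentials to every point of the ambient space.
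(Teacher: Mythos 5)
Your argument is correct and matches the paper's own proof in approach: both deduce the corollary from Proposition~\ref{jaclocsymorth} by observing that a vanishing or coincident restriction would force the Jacobian locus to be all of $\h_e^\ast$ (resp.\ $\pp_e^\ast$), contradicting the codimension bound. The paper states this in the contrapositive (``if $x_r|_{\h_e^\ast} = x_s|_{\h_e^\ast}$ then $J_{\h_e^\ast} = \h_e^\ast$''), while you pick a point off the proper locus and read off independence of the differentials there, but these are the same argument.
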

\begin{proof}
If $x_r|_{\h_e^\ast} = x_s|_{\h_e^\ast}$ for some $r \neq s$ then we would have $\h_e^\ast = J_{\h_e^\ast}(x_r : r \text{ even})$, contradicting part 1 of Proposition~\ref{jaclocsymorth}. Similarly the restrictions are non-zero. The same argument applies for part 2.
\end{proof}
\emph{Until stated otherwise we assume $\epsilon = -1$ so that $K$ is of type $C$}. Since $\g_e = \h_e \oplus \pp_e$ there is a natural inclusion $\iota : S(\h_e) \ra S(\g_e)$ which is also a $\K$-algebra homomorphism.
\begin{lem}\label{dees}
$d_\gamma x_r = d_\gamma \iota(x_r|_{\h_e^\ast})$ for $r$ even and for all $\gamma \in \h_e^\ast$.
\end{lem}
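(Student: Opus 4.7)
The plan is to reduce the identity to a statement about monomials using the $\sigma$-parity result (Proposition~\ref{sigmaxr}). Choose a basis $\BB_0$ of $\h_e$ and a basis $\BB_1$ of $\pp_e$, so that monomials in $\BB_0 \cup \BB_1$ form a basis of $S(\g_e) = \K[\g_e^\ast]$. Since $\h_e^\ast$ is by definition the annihilator of $\pp_e$ in $\g_e^\ast$, any basis monomial containing at least one factor from $\BB_1$ vanishes identically on $\h_e^\ast$. Consequently, $x_r|_{\h_e^\ast}$ equals the restriction to $\h_e^\ast$ of the partial sum $x_r^0 \in S(\h_e)$ of those monomials in $x_r$ involving only factors from $\BB_0$; and $\iota(x_r|_{\h_e^\ast})$ is precisely $x_r^0$ viewed as an element of $S(\g_e)$.

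The next step is to locate the discrepancy $y_r := x_r - \iota(x_r|_{\h_e^\ast})$. By construction $y_r$ is a sum of monomials each of which contains at least one factor from $\BB_1$. For $r$ even, Proposition~\ref{sigmaxr} gives $\sigma(x_r) = x_r$, and as noted in the proof of Corollary~\ref{vanishing1} the eigenspaces of $\sigma$ on $\K[\g_e^\ast]$ are spanned by the $\BB_0\cup \BB_1$-monomials graded by the parity of the number of $\BB_1$-factors. Thus each monomial appearing in $x_r$ has an \emph{even} number of factors from $\BB_1$, whence every monomial summand of $y_r$ has in fact \emph{at least two} factors from $\BB_1$.

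It remains to show that any monomial $m = v_1 \cdots v_k \in S(\g_e)$ with two (or more) factors from $\BB_1$ satisfies $d_\gamma m = 0$ for every $\gamma \in \h_e^\ast$. By the Leibniz rule,
\begin{equation*}
d_\gamma m \;=\; \sum_{i=1}^{k} \Bigl(\prod_{j \neq i} v_j(\gamma)\Bigr)\, v_i.
\end{equation*}
If $v_a, v_b$ (with $a \neq b$) are two factors lying in $\BB_1$, then for every $i$ at least one of the indices $a,b$ is different from $i$, so at least one of $v_a(\gamma), v_b(\gamma)$ appears in the product $\prod_{j \neq i} v_j(\gamma)$; since $\gamma$ annihilates $\pp_e$, that factor is zero. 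Hence every summand of $d_\gamma m$ vanishes, so $d_\gamma y_r = 0$, giving $d_\gamma x_r = d_\gamma \iota(x_r|_{\h_e^\ast})$ as required.

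There is no real obstacle here: the only input needed beyond elementary calculus on $S(\g_e)$ is the parity statement of Proposition~\ref{sigmaxr}, which forces the $\geq 2$ condition that makes the Leibniz expansion collapse on $\h_e^\ast$. The same template (with the parity condition $r + d_r$ even in place of $r$ even, and $\pp_e^\ast$ replacing $\h_e^\ast$) will handle the orthogonal analogue in the next subsection.
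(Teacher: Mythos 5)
Your proof is correct and takes essentially the same route as the paper: both decompose via bases $\BB_0, \BB_1$ of $\h_e$ and $\pp_e$, invoke Proposition~\ref{sigmaxr} to force every monomial of the discrepancy $x_r - \iota(x_r|_{\h_e^\ast})$ to carry at least two $\BB_1$-factors, and conclude the differential vanishes on $\h_e^\ast$; the paper phrases the last step via partial derivatives $\partial m_i/\partial x$ while you write out the equivalent Leibniz expansion, but the content is identical.
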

\begin{proof}
Let $\gamma \in \h_e^\ast$. Let $\BB_0$ and $\BB_1$ be bases for $\h_e$ and $\pp_e$ respectively, so that $\BB := \BB_0 \cup \BB_1$ is a basis for $\g_e$. We aim to show that $d_\gamma (x_r - \iota(x_r|_{\h_e^\ast})) = 0$. Now $x_r - \iota(x_r|_{\h_e^\ast})$ may be written as a finite sum $\sum_{i=1}^k c_i m_i$ where $c_i \in \K^\times$ are constants and  the $m_i \in S(\g_e)$ are monomials in the basis $\BB$. Since $\iota(x_r|_{\h_e^\ast})$ is the sum of those monomial summands of $x_r$ which contain no factors from $\BB_1$ we conclude that each $m_i$ possesses a factor from $\BB_1$. We have $\sigma \eta = -\eta$ for each $\eta \in \BB_1$ and, since $r$ is even, $\sigma x_r = x_r$ by Proposition~\ref{sigmaxr} so there must be an even number of factors from $\BB_1$ in each monomial summand of $x_r$. This implies that each $m_i$ possesses at least two factors from $\BB_1$ and that for all $x \in \BB$ the partial derivative $\frac{\partial m_i}{\partial x}$ either is zero or possesses at least one nonzero factor from $\BB_1$. The functionals $\BB_1$ annihilate $\h_e^\ast$ so $$\frac{\partial m_i}{\partial x}(\gamma) = 0$$ for all $x \in \BB$. But now $$d_\gamma (x_r - \iota(x_r|_{\h_e^\ast})) = \sum_{x \in \BB} \frac{\partial (x_r - \iota(x_r|_{\h_e^\ast}))}{\partial x}(\gamma)x = \sum_{x \in \BB} \sum_{i=1}^k c_i \frac{\partial m_i}{\partial x}(\gamma)x = 0.$$ The lemma follows.
\end{proof}
\begin{lem}\label{jal}
$J_{\h_e^\ast}(x_r : r \text{ even}) = \h_e^\ast \cap J_{\g_e^\ast}(x_r : r \text{ even})$
\end{lem}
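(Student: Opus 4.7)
The equality follows almost immediately from Lemma~\ref{dees}; my plan is to trace through the relevant identifications and then invoke that lemma.

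First I would unpack the duality. Recall from the introduction that $\h_e^\ast$ sits inside $\g_e^\ast$ as the annihilator of $\pp_e$, so that $\g_e^\ast = \h_e^\ast \oplus \pp_e^\ast$. Dually, for any $\gamma \in \h_e^\ast$ the restriction map $(\g_e^\ast)^\ast \to (\h_e^\ast)^\ast$ corresponds, under the canonical identifications $(\g_e^\ast)^\ast = \g_e$ and $(\h_e^\ast)^\ast = \h_e$, to the projection $\g_e \twoheadrightarrow \h_e$ along $\pp_e$ in the decomposition $\g_e = \h_e \oplus \pp_e$.

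The key step is to observe that Lemma~\ref{dees} forces $d_\gamma x_r \in \h_e$ for every $\gamma \in \h_e^\ast$ and every even $r$. Indeed, $\iota(x_r|_{\h_e^\ast})$ is a polynomial in generators that all belong to $\h_e$, so its differential at any point has zero $\pp_e$-component; combined with the identity $d_\gamma x_r = d_\gamma \iota(x_r|_{\h_e^\ast})$ of Lemma~\ref{dees}, this gives $d_\gamma x_r \in \h_e$. Consequently, under the identification $\h_e = (\h_e^\ast)^\ast$ explained above, the restriction $d_\gamma x_r|_{\h_e^\ast}$ coincides with $d_\gamma x_r$ itself, viewed as an element of $\h_e \subseteq \g_e$.

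From this the lemma is immediate: for $\gamma \in \h_e^\ast$, linear dependence of $\{d_\gamma x_r : r \text{ even}\}$ in $\g_e$ is the same as linear dependence of $\{d_\gamma x_r|_{\h_e^\ast} : r \text{ even}\}$ in $\h_e$, because all these vectors already lie in the subspace $\h_e$. This gives both inclusions of the desired set equality. I do not foresee any genuine obstacle: the substantive content is entirely carried by Lemma~\ref{dees} (whose proof in turn rests on Proposition~\ref{sigmaxr}), and what remains is merely bookkeeping with duality and the splitting $\g_e = \h_e \oplus \pp_e$.
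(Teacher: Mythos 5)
Your proof is correct and takes essentially the same route as the paper: both rest entirely on Lemma~\ref{dees}, with everything else being linear algebra and duality bookkeeping. The paper phrases the reverse inclusion by pushing the linear dependence through $\iota$ and then invoking Lemma~\ref{dees}, whereas you package the same input as the observation $d_\gamma x_r \in \h_e$ for all $\gamma \in \h_e^\ast$ and $r$ even, from which both inclusions follow at once because restriction to $\h_e^\ast$ is the identity on $\h_e$; the two formulations are interchangeable.
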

\begin{proof}
Fix $\gamma \in \h_e^\ast$. If $\sum_r c_{2r} d_\gamma x_{2r} = 0$ then $$(\sum_r c_{2r} d_\gamma x_{2r})|_{\h_e^\ast} = \sum_r c_{2r} d_\gamma (x_{2r}|_{\h_e^\ast}) = 0$$ which gives one inclusion. Conversely suppose $\sum_r c_{2r} d_\gamma (x_{2r}|_{\h_e^\ast}) = 0$. Then $$\iota(\sum_r c_{2r} d_\gamma (x_{2r}|_{\h_e^\ast})) = \sum_r c_{2r} d_\gamma \iota(x_{2r}|_{\h_e^\ast}) =  \sum_r c_{2r} d_\gamma x_{2r} = 0,$$ which gives the other inclusion.
\end{proof}
Let $\alpha$ be as defined in the previous section, with the additional constraint that $a_i = - a_{i'}$ for all $i \neq i'$. We define $$\bar\beta = \beta + \beta' \text{ where } \beta' = \sum_{i+1 \neq i'} \varepsilon_{i,i+1,0} (\xi_{(i+1)'}^{i', \lambda_i-1})^\ast.$$
\begin{rem}\label{remremree}
\rm{These definitions for $\alpha$ and $\bar{\beta}$ are rather unclear at first glance. They first appeared in \cite{Yak1}, were used again in \cite{PPY}, and have a simple rationale behind them which we shall briefly discuss. The obvious guess of how to construct analogues of $\alpha$ and $\beta$, but lying in $\h_e^\ast$, is to define an automorphism $\sigma^\ast$ which acts by $+1$ on $\h_e^\ast$ and acts by $-1$ on $\pp_e^\ast$, and extends to all of $\g_e^\ast$ by linearity. Just as we obtained a spanning set for $\h_e$ by considering expressions $x + \sigma(x)$ with $x \in \g_e$ we may obtain analogues for $\alpha$ and $\beta$ by considering $\alpha + \sigma^\ast \alpha$ and $\beta + \sigma^\ast \beta$. This is roughly what we do here, although some of the summands $(\xi_i^{j,s})^\ast$ are rescaled in order to simplify notation.}
\end{rem}
In Section~\ref{elemInv} we introduced dual vectors $(\zeta_i^{j,s})^\ast := (\xi_i^{j,\lambda_j-1-s})^\ast + \varepsilon_{i,j,s} (\xi_{j'}^{i',\lambda_i - 1 - s})^\ast$. In order to carry out explicit calculations using $\alpha$ and $\bar\beta$ it will be necessary to express these two elements in terms of the $(\zeta_i^{j,s})^\ast$.
\begin{lem}\label{albeso} If $a_i = - a_{i'}$ for $i \neq i'$ then
\begin{eqnarray*}
\alpha = \frac{1}{2} \sum_{i = i'} a_i(\zeta_i^{i,0})^\ast +  \sum_{i < i'}a_i(\zeta_i^{i,0})^\ast; \\
\bar\beta = \frac{1}{2}\sum_{i+1 = i'}(\zeta_i^{i+1, 0})^\ast + \sum_{i+1 \neq i'}(\zeta_i^{i+1,0})^\ast;
\end{eqnarray*}
and in particular $\alpha, \bar\beta \in \h_e^\ast$.
\end{lem}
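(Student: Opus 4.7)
The proof will be a direct expansion of the right-hand sides, using the explicit formula for $\varepsilon_{i,j,s}$ from Lemma~\ref{spanningdetails}(1) together with the parity constraint of Lemma~\ref{nilpotents}(2). Once both formulas are verified, membership in $\h_e^\ast$ is automatic, since each $(\zeta_i^{j,s})^\ast$ lies in the span of $\h_e^\ast$ by Lemma~\ref{spanningdetails}(6).

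For the expression for $\alpha$, I would split the sum $\sum_i a_i (\xi_i^{i,\lambda_i-1})^\ast$ according to whether $i = i'$ or $i \neq i'$. For $i = i'$, part~2 of Lemma~\ref{nilpotents} (together with $\epsilon = -1$) forces $\lambda_i$ to be even, so $\varepsilon_{i,i,0} = (-1)^{\lambda_i} = 1$ and hence $(\zeta_i^{i,0})^\ast = 2(\xi_i^{i,\lambda_i-1})^\ast$. The factor $\tfrac{1}{2}$ in the formula accounts for this doubling. For $i \neq i'$ one gets $\varepsilon_{i,i,0} = -1$ and $(\zeta_i^{i,0})^\ast = (\xi_i^{i,\lambda_i-1})^\ast - (\xi_{i'}^{i',\lambda_i-1})^\ast$; using $a_{i'} = -a_i$ and $\lambda_{i'} = \lambda_i$, the sum restricted to $i < i'$ reproduces the $i \neq i'$ part of $\alpha$. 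Combining the two pieces gives $\alpha$ exactly.

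For $\bar\beta$ the bookkeeping is analogous but the roles of the two indices are asymmetric. When $i + 1 = i'$ we have $(i+1)' = i$, so the two summands in $(\zeta_i^{i+1,0})^\ast$ collapse onto the same linear form $(\xi_i^{i+1,\lambda_i-1})^\ast$; a short computation using Lemma~\ref{spanningdetails}(1) and Lemma~\ref{nilpotents}(2) (which forces $\lambda_i$ odd in this case) shows $\varepsilon_{i,i+1,0} = 1$, giving $(\zeta_i^{i+1,0})^\ast = 2(\xi_i^{i+1,\lambda_i-1})^\ast$, explaining the $\tfrac{1}{2}$. When $i + 1 \neq i'$ the two summands are genuinely distinct: the first contributes the $(\xi_i^{i+1,\lambda_{i+1}-1})^\ast$ piece of $\beta$, while the second assembles exactly into $\beta'$ by definition. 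Summing the two cases yields $\beta + \beta' = \bar\beta$.

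The only real obstacle is keeping track of the involution $i \mapsto i'$ and the sign data $\varepsilon_{i,j,s}$ in the correct cases; once the case-by-case evaluation of $\varepsilon_{i,i,0}$ and $\varepsilon_{i,i+1,0}$ is pinned down by combining Lemma~\ref{spanningdetails}(1) with the symplectic parity condition from Lemma~\ref{nilpotents}(2), everything falls out by direct substitution. The final assertion $\alpha, \bar\beta \in \h_e^\ast$ then follows because the right-hand sides are manifestly $\K$-linear combinations of the spanning vectors $(\zeta_i^{j,s})^\ast$ of $\h_e^\ast$.
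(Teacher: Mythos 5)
Your argument is correct and follows essentially the same route as the paper's own proof: split both sums according to whether $i = i'$ or $i \ne i'$ (resp. $i+1 = i'$ or $i+1 \ne i'$), use Lemma~\ref{nilpotents}(2) with $\epsilon = -1$ to pin down the parity of $\lambda_i$, evaluate $\varepsilon_{i,i,0}$ and $\varepsilon_{i,i+1,0}$ from Lemma~\ref{spanningdetails}(1), and expand $(\zeta_i^{j,s})^\ast$ by its definition, with the factor $\tfrac{1}{2}$ compensating for the collapse of the two summands when the indices coincide under the involution. The only difference is presentational: you spell out $\varepsilon_{i,i,0} = -1$ for $i \ne i'$ and the cancellation via $a_{i'} = -a_i$, $\lambda_{i'} = \lambda_i$ explicitly, where the paper compresses this into a single sentence, but the content is identical.
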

\begin{proof}
Since $\epsilon = -1$, Lemma~\ref{nilpotents} implies that $i = i'$ if and only if $\lambda_i$ is even. Using Lemma~\ref{spanningdetails} it follows that $(\zeta_i^{i,0})^\ast = 2(\xi_i^{i,\lambda_i-1})^\ast$ for all $i = i'$ and $(\zeta_i^{i,0})^\ast = (\xi_i^{i,\lambda_i-1})^\ast -(\xi_{i'}^{i',\lambda_i-1})^\ast$ for all $i < i'$. The formula for $\alpha$ follows. Similarly $\varepsilon_{i,i+1,0} = (-1)^{\lambda_{i+1}}\varpi_{i\leq i'} \varpi_{i+1\leq (i+1)'}$. If $i+1 = i'$ then Lemma~\ref{nilpotents} implies $\lambda_{i+1}$ is odd and that $\varepsilon_{i,i+1,0} = 1$. We conclude that $(\zeta_i^{i+1,0})^\ast = 2(\xi_i^{i+1, \lambda_{i+1} - 1})^\ast$ which completes the proof.
\end{proof}
Recall that $J := J_{\g_e^\ast}(x_1,...,x_N)$ and that there is a rational linear action $\rho : \K^\times \ra GL(\g_e^\ast)$ defined preceding Lemma~\ref{xrdiff}. If $i + 1 \neq i'$ then $(i+1)' > i'$ and so $$\rho(t) (\xi_{(i+1)'}^{i', \lambda_i - 1})^\ast = t^{k_i}  (\xi_{(i+1)'}^{i', \lambda_i - 1})^\ast$$ where $k_i = (i+1)' - i' + 1 \geq 2$.

\begin{lem}\label{plane2}
$(\K \alpha \oplus \K\bar\beta) \cap J = 0$
\end{lem}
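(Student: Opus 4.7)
The plan is to extend the strategy of Lemma~\ref{plane1} by combining it with a $\rho$-limit argument. I first choose the scalars $a_i$ entering the definition of $\alpha$ so that they are distinct \emph{and} satisfy $a_i = -a_{i'}$ whenever $i \neq i'$; a generic choice satisfies both conditions simultaneously in characteristic different from $2$, so Lemma~\ref{alpha} remains applicable.

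Write $\gamma = t_1\alpha + t_2\bar\beta$ with $(t_1,t_2) \neq (0,0)$ and split into two cases. In the case $t_2 = 0$ we have $\gamma = t_1\alpha$ with $t_1 \neq 0$; I apply Lemma~\ref{alpha} to produce $g \in G_e$ with $\Ad^\ast(g)\alpha = \alpha + \beta$, so that $\Ad^\ast(g)\gamma = t_1\alpha + t_1\beta$. Lemma~\ref{plane1} then says this translate avoids $J$, and part~1 of Lemma~\ref{Jstable} ($G_e$-stability) forces $\gamma \notin J$.

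In the remaining case $t_2 \neq 0$, I use the formulae $\rho(t)\alpha = t\alpha$, $\rho(t)\beta = \beta$, and $\rho(t)(\xi_{(i+1)'}^{i',\lambda_i-1})^\ast = t^{k_i}(\xi_{(i+1)'}^{i',\lambda_i-1})^\ast$ with each $k_i \geq 2$, so every power of $t$ appearing in $\rho(t)\gamma$ is non-negative. The assignment $t \mapsto \rho(t)\gamma$ therefore extends to a morphism $\phi : \K \to \g_e^\ast$ with $\phi(0) = t_2\beta$. By Lemma~\ref{beta}, $t_2\beta \notin J$, so $\phi^{-1}(J)$ is a proper Zariski-closed (hence finite) subset of $\K$, and any $t \in \K^\times$ outside this finite set gives $\rho(t)\gamma \notin J$. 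Part~2 of Lemma~\ref{Jstable} then delivers $\gamma \notin J$.

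The main obstacle will be the second case. In Lemma~\ref{plane1} one exploited the exact $\rho$-invariance of $\beta$ to describe $\rho(\K^\times)\gamma$ as a dense subset of the line $\K\alpha + t_2\beta$. For $\bar\beta$ this invariance fails, and the argument must be replaced by the slightly weaker observation that the non-$\rho$-invariant summand $\beta'$ carries strictly positive $\rho$-weights. This ensures that $t \mapsto \rho(t)\gamma$ really does extend to a morphism on all of $\K$ with $\phi(0)$ landing on the distinguished line $\K\beta$, after which the closedness of $J$ together with Lemma~\ref{beta} closes the argument.
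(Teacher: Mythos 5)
Your proof is correct and follows essentially the same two-case strategy as the paper: treat $t_2 = 0$ via Lemma~\ref{plane1}, and treat $t_2 \neq 0$ by extending $t\mapsto\rho(t)\gamma$ to a morphism on all of $\K$ (possible because the $\rho$-weights occurring in $\beta'$ are strictly positive) whose value at $t=0$ lies on $\K\beta$ and hence off $J$ by Lemma~\ref{beta}, then invoking the $\rho(\K^\times)$-stability of $J$. Your handling of the $t_2 = 0$ case is a touch roundabout: since $\gamma = t_1\alpha$ already lies in $\K\alpha\oplus\K\beta$, Lemma~\ref{plane1} applies to it directly, and the conjugation by the element of Lemma~\ref{alpha} is unnecessary.
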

\begin{proof}
Let $\gamma = t_1 \alpha + t_2\bar\beta \neq 0$ with $t_1,t_2 \in \K$. We shall show that $\gamma \in \g_e^\ast \backslash J$. Supposing $t_1 \neq 0$ and $t_2 = 0$ we may invoke Lemma~\ref{plane1} to conclude that $\gamma \in \g_e^\ast \backslash J$.

Suppose $t_2\neq 0$, then consider the set $E = \{t\alpha + (\beta + \sum \varepsilon_{i,i+1, 0} t^{k_i}\xi_{(i+1)'}^{i',\lambda_i-1}) : t \in \K\}$ where $k_i \in \N$ is defined preceding the statement of the lemma. It is a one dimenisonal variety containing $\beta$, hence by Lemma~\ref{beta} it must intersect the set $\g_e^\ast \backslash J$ in a non-empty open subset. Since $\rho(\K^\times)\gamma = \{t\alpha + (\beta + \sum \varepsilon_{i,i+1, 0} t^{k_i}\xi_{(i+1)'}^{i',\lambda_i-1}) : t \in \K^\times\} \subseteq E$ is also non-empty and open in $E$ the intersection $\rho(\K^\times)\gamma \cap (\g_e^\ast \backslash J)$ is non-empty. By part 2 of Lemma~\ref{Jstable}, $J$ is $\rho(\K^\times)$-stable and so $\gamma \in \g_e^\ast \backslash J$.
\end{proof}

We can now give a proof for part 1 of Proposition~\ref{jaclocsymorth}.
\begin{proof}
By Lemmas~\ref{albeso} and \ref{plane2} there is a 2 dimensional plane contained in $\h_e^\ast$ intersecting $J$ only at zero. By Lemma~\ref{jal} we have $J_{\h_e^\ast}(x_r : r \text{ even}) = \h_e^\ast \cap J_{\g_e^\ast}(x_r : r \text{ even}) \subseteq \h_e^\ast \cap J$ so that same plane intersects $J_{\h_e^\ast}(x_r : r \text{ even})$ only at zero. As $J_{\h_e^\ast}(x_r : r \text{ even})$ is conical and Zariski closed, the proposition follows.
\end{proof}

In order to prove part 2 of Proposition~\ref{jaclocsymorth} we follow exactly the same scheme of argument as above. Since we treated the symplectic case so carefully, our proof here will constantly refer back to previous arguments. \emph{For the remnant of this subsection take $\epsilon = 1$}. Again we have an inclusion $\iota : S(\pp_e) \ra S(\g_e)$. The next lemma is analogous to Lemma~\ref{jal}.
\begin{lem}\label{jal2}
$J_{\pp_e^\ast}(x_r : r + d_r \text{ even}) = \pp_e^\ast \cap J_{\g_e^\ast}(x_r : r + d_r \text{ even})$
\end{lem}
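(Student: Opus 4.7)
The plan is to follow exactly the scheme used for Lemma \ref{jal}, with the role of $\h_e$ played by $\pp_e$. The crux of that earlier argument was Lemma \ref{dees}, which compared $d_\gamma x_r$ with $d_\gamma\iota(x_r|_{\h_e^\ast})$ at points of $\h_e^\ast$. My first step, therefore, is to establish the orthogonal analogue: if $r+d_r$ is even and $\gamma\in\pp_e^\ast$, then $d_\gamma x_r = d_\gamma\iota(x_r|_{\pp_e^\ast})$.

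To prove this analogue, fix bases $\BB_0$ of $\h_e$ and $\BB_1$ of $\pp_e$ and write $x_r - \iota(x_r|_{\pp_e^\ast})=\sum c_i m_i$ with each $m_i$ a monomial in $\BB=\BB_0\cup\BB_1$. By construction each $m_i$ contains at least one factor from $\BB_0$. The parity count in Lemma \ref{dees} now runs through $\BB_0$ rather than $\BB_1$: Proposition \ref{sigmaxr} gives $\sigma x_r=(-1)^r x_r$, so each monomial summand of $x_r$ lies in the $(-1)^r$-eigenspace of $\sigma$, i.e.\ the number of $\BB_1$-factors in $m_i$ has parity equal to that of $r$. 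Since $m_i$ has total degree $d_r$ and $r+d_r$ is even, the number of $\BB_0$-factors in each $m_i$ is even, hence at least $2$. Consequently, for every $x\in\BB$ the partial derivative $\partial m_i/\partial x$ still contains a factor from $\BB_0$; since the elements of $\BB_0$ annihilate $\pp_e^\ast$ (recall $\pp_e^\ast$ identifies with the annihilator of $\h_e$ in $\g_e^\ast$), we get $(\partial m_i/\partial x)(\gamma)=0$. Summing over $x\in\BB$ yields $d_\gamma(x_r-\iota(x_r|_{\pp_e^\ast}))=0$.

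With this analogue of Lemma \ref{dees} in hand, the proof of Lemma \ref{jal2} is formally identical to that of Lemma \ref{jal}. Fix $\gamma\in\pp_e^\ast$ and consider a linear relation $\sum_r c_r d_\gamma x_r = 0$ ranging over $r$ with $r+d_r$ even; restricting both sides to $\pp_e^\ast$ gives $\sum_r c_r d_\gamma(x_r|_{\pp_e^\ast})=0$, establishing one inclusion. Conversely, given $\sum_r c_r d_\gamma(x_r|_{\pp_e^\ast})=0$, apply $\iota$ and invoke the analogue of Lemma \ref{dees} to conclude $\sum_r c_r d_\gamma x_r=0$, yielding the reverse inclusion.

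The only real point requiring care is the parity book-keeping in the first step, where the condition $r+d_r$ even (rather than just $r$ even) must be used to upgrade "at least one $\BB_0$-factor" to "at least two $\BB_0$-factors" in every parity combination; this is where the definition of the restricted invariants appearing in Proposition \ref{jaclocsymorth}(2) (and in Corollary \ref{vanishing1}(2)) is exactly what is needed, and the rest of the argument then transplants without modification.
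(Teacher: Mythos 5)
Your proof is correct and follows essentially the same scheme as the paper: establish the orthogonal analogue of Lemma~\ref{dees} by a parity count on $\BB_0$-factors (using $\sigma x_r = (-1)^r x_r$ and the hypothesis $r+d_r$ even to upgrade "at least one" to "at least two" $\BB_0$-factors), and then transplant the two-inclusion argument of Lemma~\ref{jal} verbatim. The paper compresses the parity count into a single reference to the reasoning of Corollary~\ref{vanishing1}; you spell it out, but the argument is identical.
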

\begin{proof}
First we prove a version of Lemma~\ref{dees}: that $d_\gamma x_r = d_\gamma i(x_r|_{\pp_e^\ast})$ for all $\gamma \in \pp_e^\ast$. Resume notations $\BB_0$, $\BB_1$ and $\BB$ from Lemma~\ref{dees}. This time write $x_r - \iota(x_r|_{\pp_e^\ast}) = \sum_{i=1}^k c_i m_i$ where $c_i \in \K^\times$ are non-zero constants and $m_i$ are monomials in $\BB$. Since $\iota(x_r|_{\pp_e^\ast})$ is just the sum of those monomials summands of $x_r$ which contain no terms from $\BB_0$, so each $m_i$ possesses a factor from $\BB_0$. Using a reasoning identical to Lemma~\ref{vanishing1} we see that the number of such factors is even. The proof now concludes exactly as per Lemma~\ref{dees}. In order to finish the current proof we use identical calculations to Lemma~\ref{jal}, simply replacing the set $\{x_r : r \text{ even}\}$ with $\{x_r: r + d_r \text{ even}\}$, and restricting our functions to $\pp_e^\ast$ rather than $\h_e^\ast$.
\end{proof}
Next we identify a 2-dimensional plane contained in $\pp_e^\ast$ intersecting $J_{\pp_e^\ast} (x_r : r + d_r \text{ even})$ only at zero. As was noted in Remark~\ref{remremree} our construction in type $C$ is essentially to take $\alpha + \sigma^\ast \alpha$ and $\beta + \sigma^\ast \beta$. The obvious choice when constructing elements in $\pp_e^\ast$ is to consider $\alpha - \sigma^\ast \alpha$ and $\beta - \sigma^\ast \beta$. This is essentially what we do here. 

Define $\alpha$ in the same way as in Section~\ref{JacLoc1}, with $a_i = -a_{i'}$ for all $i \neq i'$, and define $\bar\beta = \beta - \beta'$.
\begin{lem}\label{albeos} We have
\begin{eqnarray*}
\alpha = \frac{1}{2} \sum_{i = i'} a_i(\eta_i^{i,0})^\ast +  \sum_{i < i'}a_i(\eta_i^{i,0})^\ast;\\
\bar\beta = \frac{1}{2}\sum_{i+1 = i'}(\eta_i^{i+1, 0})^\ast + \sum_{i+1 \neq i'}(\eta_i^{i+1,0})^\ast;
\end{eqnarray*}
and in particular $\alpha, \bar\beta \in \pp_e^\ast$.
\end{lem}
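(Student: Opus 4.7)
The plan is to mimic the proof of Lemma~\ref{albeso} step for step, keeping careful track of how the change of sign $\epsilon=-1\rightsquigarrow\epsilon=1$ flips which Jordan blocks are paired and therefore which $\varepsilon_{i,j,s}$ equal $\pm 1$. The only genuine obstacle is sign bookkeeping; the strategy is otherwise identical.

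First I would invoke Lemma~\ref{nilpotents}: for $\epsilon=1$ we have $i=i'$ iff $\lambda_i$ is odd (the opposite parity to the symplectic case). Combined with part~(1) of Lemma~\ref{spanningdetails}, this yields $\varepsilon_{i,i,0}=(-1)^{\lambda_i}$, which equals $-1$ when $i=i'$ and $+1$ when $i<i'$. Substituting into the definition $(\eta_i^{i,0})^{\ast}=(\xi_i^{i,\lambda_i-1})^{\ast}-\varepsilon_{i,i,0}(\xi_{i'}^{i',\lambda_i-1})^{\ast}$ then gives $(\eta_i^{i,0})^{\ast}=2(\xi_i^{i,\lambda_i-1})^{\ast}$ when $i=i'$ and $(\eta_i^{i,0})^{\ast}=(\xi_i^{i,\lambda_i-1})^{\ast}-(\xi_{i'}^{i',\lambda_i-1})^{\ast}$ when $i<i'$. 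In the latter case I multiply by $a_i$ and use $a_{i'}=-a_i$ and $\lambda_i=\lambda_{i'}$ to rewrite $a_i(\eta_i^{i,0})^{\ast}=a_i(\xi_i^{i,\lambda_i-1})^{\ast}+a_{i'}(\xi_{i'}^{i',\lambda_{i'}-1})^{\ast}$. Summing the $i=i'$ and $i<i'$ contributions (with the $\tfrac12$ absorbing the factor $2$ in the diagonal case, and the $i<i'$ sum covering both elements of each pair $\{i,i'\}$) recovers $\alpha=\sum_{i}a_i(\xi_i^{i,\lambda_i-1})^{\ast}$ from (\ref{alphabeta}).

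For $\bar\beta=\beta-\beta'$ I compute $\varepsilon_{i,i+1,0}=(-1)^{\lambda_{i+1}}\varpi_{i\le i'}\varpi_{i+1\le(i+1)'}$. When $i+1=i'$, one has $(i+1)'=i$ and $\lambda_i=\lambda_{i+1}$ is even (as $i\ne i'$), hence $\varepsilon_{i,i+1,0}=-1$; substituting into $(\eta_i^{i+1,0})^{\ast}$ yields $(\eta_i^{i+1,0})^{\ast}=2(\xi_i^{i+1,\lambda_{i+1}-1})^{\ast}$, which is exactly twice the corresponding summand of $\beta$, and there is no matching $\beta'$-term because $\beta'$ only sums over $i+1\ne i'$. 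When $i+1\ne i'$, no simplification is needed: the summand of $\beta$ together with the summand $-\varepsilon_{i,i+1,0}(\xi_{(i+1)'}^{i',\lambda_i-1})^{\ast}$ coming from $-\beta'$ combine, by definition, into $(\eta_i^{i+1,0})^{\ast}$. Collecting the two cases with the appropriate $\tfrac12$ gives the claimed expression for $\bar\beta$.

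Finally, by part~(6) of Lemma~\ref{spanningdetails} the vectors $(\eta_i^{j,s})^{\ast}$ span $\pp_e^{\ast}$, so the two formulae immediately show $\alpha,\bar\beta\in\pp_e^{\ast}$. The main place to be careful is the parity analysis: because swapping $\epsilon$ exchanges the roles of ``$\lambda_i$ even'' and ``$\lambda_i$ odd'', the values of $\varepsilon_{i,i,0}$ and $\varepsilon_{i,i+1,0}$ are negated in every relevant case compared to Lemma~\ref{albeso}; combined with the sign flip between $\zeta$ and $\eta$ and between $\beta+\beta'$ and $\bar\beta=\beta-\beta'$, all minus signs cancel and the $\alpha$, $\bar\beta$ of (\ref{alphabeta}) really do lie in $\pp_e^{\ast}$.
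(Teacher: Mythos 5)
Your proposal is correct and takes essentially the same approach as the paper: the paper's own proof is the one-line instruction to repeat the argument of Lemma~\ref{albeso} with $\zeta$ replaced by $\eta$ and ``odd'' exchanged with ``even,'' and your write-up carries out exactly that substitution with the sign bookkeeping made explicit (in particular, correctly obtaining $\varepsilon_{i,i,0}=(-1)^{\lambda_i}=-1$ for $i=i'$ and $\varepsilon_{i,i+1,0}=-1$ for $i+1=i'$ in the orthogonal case, so that the minus sign in $(\eta_i^{j,0})^\ast$ and in $\bar\beta=\beta-\beta'$ produce the same final formulas).
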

\begin{proof}
Simply follow the proof of Lemma~\ref{albeso} verbatim, replacing each occurrence of $\zeta_i^{j,s}$ with $\eta_i^{j,s}$, and exchanging the words odd and even.
\end{proof}
We can now give a version of Lemma~\ref{plane2}. The statement of the lemma is precisely the same, but we remind the reader that now we have $\epsilon = 1$, and our definition of $\bar\beta$ is slightly different.
\begin{lem}
$\K \alpha \oplus \K \bar\beta \cap J = 0$
\end{lem}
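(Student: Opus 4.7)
The plan is to mimic the proof of Lemma~\ref{plane2} verbatim, substituting $\bar\beta = \beta - \beta'$ for $\bar\beta = \beta + \beta'$. Given nonzero $\gamma = t_1\alpha + t_2\bar\beta$, I aim to show $\gamma \in \g_e^\ast \setminus J$. If $t_2 = 0$, then $\gamma$ is a nonzero scalar multiple of $\alpha$, and since the additional constraint $a_i = -a_{i'}$ (for $i \neq i'$) imposed in this subsection remains compatible with pairwise distinctness of the $a_i$, Lemma~\ref{plane1} applies directly.

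The heart of the argument is the case $t_2 \neq 0$. Here I will trace the $\rho(\K^\times)$-orbit of $\gamma$: using $\rho(s)\alpha = s\alpha$, $\rho(s)\beta = \beta$, and $\rho(s)(\xi_{(i+1)'}^{i',\lambda_i-1})^\ast = s^{k_i}(\xi_{(i+1)'}^{i',\lambda_i-1})^\ast$, where $k_i := (i+1)' - i' + 1$, one obtains
\[ \rho(s)\gamma \;=\; s t_1\alpha + t_2\beta - t_2\sum_{i+1\neq i'}\varepsilon_{i,i+1,0}\,s^{k_i}(\xi_{(i+1)'}^{i',\lambda_i-1})^\ast. \]
This orbit sits inside the irreducible one-parameter variety $E \subseteq \g_e^\ast$ defined by the same expression with $s \in \K$. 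At $s = 0$ one finds $t_2\beta \in E$, and by Lemma~\ref{beta} we have $t_2\beta \notin J$; hence $E \setminus J$ is a nonempty Zariski-open subset of $E$ which must meet the cofinite subset $\rho(\K^\times)\gamma$. The $\rho(\K^\times)$-stability of $J$ (part 2 of Lemma~\ref{Jstable}) then forces $\gamma$ itself to lie in $\g_e^\ast \setminus J$.

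The only step genuinely distinguishing the orthogonal from the symplectic case — and thus the main technical obstacle — is verifying the inequality $k_i \geq 2$ for every $i$ with $i+1 \neq i'$; without this, $t_2\beta$ would not actually belong to $E$ at $s = 0$. Under $\epsilon = 1$, Lemma~\ref{nilpotents} gives $i = i'$ iff $\lambda_i$ is odd, so within the convention $i' \in \{i-1, i, i+1\}$ the check splits into three cases: if $i = i'$ then $(i+1)' \geq i+1 > i = i'$, so $k_i \geq 2$; if $i' = i-1$ then $(i+1)' \geq i+1$, so $k_i \geq 3$; and $i' = i+1$ is excluded by hypothesis. Once this inequality is in hand, the remaining argument is word-for-word identical to its symplectic counterpart.
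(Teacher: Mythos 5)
Your proof is correct and takes essentially the same route as the paper, which simply instructs the reader to replay the argument of Lemma~\ref{plane2} with $E$ redefined to carry the minus sign reflecting $\bar\beta = \beta - \beta'$. One small inaccuracy in your framing: the verification that $k_i \geq 2$ is sound but does not ``genuinely distinguish the orthogonal from the symplectic case'' as you claim — the inequality depends only on the convention $i' \in \{i-1,i,i+1\}$ and the involution property, not on $\epsilon$, and was already established in the paragraph preceding Lemma~\ref{plane2}, so the paper reuses it without comment.
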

\begin{proof}
The argument is identical to Lemma~\ref{plane2} except in this instance the correct definition of $E$ is  $\{t\alpha + (\beta - \sum \varepsilon_{i,i+1, 0} t^{k_i}\xi_{(i+1)'}^{i',\lambda_i-1}) : t \in \K\}$, which reflects the fact that $\bar\beta = \beta - \beta'$.
\end{proof}
We may now supply the proof of part 2 of Proposition~\ref{jaclocsymorth}. 
\begin{proof}
By Lemmas~\ref{plane2} and \ref{albeos} there is a two dimensional plane contained in $\pp_e^\ast$ intersecting $J$ only at zero. By lemma \ref{jal2} we have $J_{\pp_e^\ast}(x_r : r + d_r \text{ even}) = \pp_e^\ast \cap J_{\g_e^\ast}(x_r : r + d_r \text{ even}) \subseteq \pp_e^\ast \cap J$ so that same plane intersects $J_{\pp_e^\ast}(x_r : r + d_r \text{ even})$ only at zero. As $J_{\pp_e^\ast}(x_r : r + d_r \text{ even})$ is conical and Zariski closed, the proposition follows.
\end{proof}

\section{Generic Stabalisers}\label{GenEl}

Here we make a quick detour to discuss the existence of generic stabalisers in certain cases. We define generic stabalisers in a general setting. Suppose $\tG$ is an algebraic group with identity element $1$ and $\tg = \Lie(\tG)$. If $W$ is a $\K$-vector space and $\rho : \tG \ra GL(W)$ is a rational representation then $d_1\rho : \tg \ra \gl(W)$ is a representation of $\tg$. We say that $w \in W$ is a generic point, and that $\tg_w$ is a generic stabaliser if there exists a Zariski open subset $\mathcal{O} \subseteq W$ such that for all $v \in \mathcal{O}$, the centralisers $\tg_w$ and $\tg_v$ are $\tG$-conjugate under the adjoint action of $\tG$ on $\tg$.

When completing the proofs of Theorems~\ref{main1} and \ref{main3} we shall require to know the index of $\g_e$, of $\h_e$ when $\epsilon = -1$, and of $\h_e$ in $\pp_e$ when $\epsilon = 1$. The first two of these indexes are actually already known and calculating $\ind(\h_e,\pp_e)$ when $\epsilon = 1$ is the main purpose of this section. This shall be achieved in Corollary~\ref{orthindex}. In order to calculate the index we prove the existence of a generic stabaliser.
The existence of a generic stablisers of $\g_e$ in $\g_e^\ast$ and of $\h_e$ in $\h_e^\ast$ when $\epsilon = -1$ was proven in \cite[$\S 5$]{Yak1} over $\C$ using a powerful criterion due to Elashvili \cite[$\S 1$]{Ela}, which is particular to characteristic zero. We extract the scheme of argument from that criterion and with a little extra work our results follow over $\K$.

The following lemma is very well known and embodies a common line of reasoning for studying generic stabalisers. We include the proof for the reader's convenience. The proof and, in fact, all results contained in this section are characteristic free. As such we take $\tG \subseteq GL(W)$ to be an algebraic group over $\K$ or $\C$ with Lie algebra $\tg$.
\begin{lem}\label{genreg}
Fix $\gamma \in \tg$ and let $\V_\gamma = \{\delta \in W : \tg_\gamma \subseteq \tg_{\delta}\}$. If \begin{eqnarray*}\varphi: \tG \times \V_\gamma \ra W;\\ \varphi(g,w) = g\cdot w\end{eqnarray*} is a dominant morphism then $\gamma$ is a regular, generic point for the action of $\tG$ on $W$.
\end{lem}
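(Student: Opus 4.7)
The plan is to combine two standard facts: upper semicontinuity of the stabiliser dimension function $w \mapsto \dim \tg_w$ on $W$, and the fact that the image of a dominant morphism of irreducible varieties contains a non-empty open subset of the target. Set $d := \min_{w \in W} \dim \tg_w$. The map $\tg \to W$ defined by $X \mapsto (d_1\rho)(X)w$ depends linearly (hence morphically) on $w$, and its kernel is $\tg_w$; since matrix rank is lower semicontinuous, $\dim \tg_w$ is upper semicontinuous in $w$. Therefore $U := \{w \in W : \dim \tg_w = d\}$ is open and dense in $W$. By Chevalley's theorem the image of $\varphi$ is constructible, and by dominance this image is dense in $W$, so it contains a non-empty open subset of $W$. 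Thus $\mathcal{O} := U \cap \varphi(\tG \times \V_\gamma)$ is non-empty and open in $W$.

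The next step is to exploit a single point of $\mathcal{O}$ to pin down $\dim \tg_\gamma$. Choose $w \in \mathcal{O}$ and write $w = g \cdot v$ with $g \in \tG$ and $v \in \V_\gamma$. Equivariance of the representation gives $\tg_w = \Ad(g)\tg_v$, so $\dim \tg_v = \dim \tg_w = d$. The defining condition of $\V_\gamma$ supplies the inclusion $\tg_\gamma \subseteq \tg_v$, whence $\dim \tg_\gamma \leq d$; minimality of $d$ then forces equality $\dim \tg_\gamma = d$. This already shows that $\gamma$ is a regular point. Moreover, containment of linear subspaces of equal finite dimension forces $\tg_v = \tg_\gamma$, and therefore $\tg_w = \Ad(g)\tg_\gamma$.

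Finally, observing that the previous paragraph did not use anything special about the chosen $w$, the same argument applies verbatim to every point of $\mathcal{O}$: each $w \in \mathcal{O}$ may be written $w = g\cdot v$ with $v \in \V_\gamma$ and $\dim \tg_v = d = \dim \tg_\gamma$, whence $\tg_v = \tg_\gamma$ and $\tg_w = \Ad(g)\tg_\gamma$. Thus all stabilisers $\tg_w$ with $w \in \mathcal{O}$ are $\tG$-conjugate to $\tg_\gamma$, which is the required generic property. There is no real obstacle in the argument; the only point where care is needed is the passage from dominance of $\varphi$ to the existence of a genuine open subset of $W$ contained in the image, but this is precisely the content of Chevalley's theorem combined with irreducibility of $W$.
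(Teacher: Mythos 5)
Your argument follows essentially the same route as the paper's: combine upper semicontinuity of the stabiliser dimension (so the locus of minimal stabiliser dimension is open dense) with the fact that a dominant morphism's image contains a non-empty open set, then use the inclusion $\tg_\gamma \subseteq \tg_v$ coming from $v \in \V_\gamma$ together with a dimension count to conclude both regularity and $\tG$-conjugacy of stabilisers. One small slip: $\varphi(\tG \times \V_\gamma)$ is only constructible, so $\mathcal{O} := U \cap \varphi(\tG \times \V_\gamma)$ need not be open as you assert; you should instead set $\mathcal{O} := U \cap \mathcal{P}$ where $\mathcal{P}$ is the non-empty open subset of $W$ contained in the image (which you already produce via Chevalley plus dominance), after which the remainder of your argument goes through verbatim.
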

\begin{proof}
Suppose $\mathcal{P}$ is an open subset of $W$ contained in $\varphi(\tG \times \V_\gamma)$. It is well known that there exists an open subset $\mathcal{O}$ of $W$ such that the stabalisers of all points in $\mathcal{O}$ have dimension $\ind(\tg, W)$ (argue in the style of \cite[1.11.5]{Dix}). Then there exists $\delta \in \mathcal{O} \cap \mathcal{P}$, ie. there is $(g, \delta') \in \tG \times \V_\gamma$ such that $\varphi(g,\delta')= \delta$ and $\dim(\tg_\delta) = \ind(\tg, W)$. We conclude that $$\dim(\tg_\gamma) \leq \dim(\tg_{\delta'}) = \dim(\tg_\delta) = \ind(\tg, W)$$ so that $\gamma$ is regular. For all $\gamma' \in \mathcal{O} \cap \mathcal{P}$ we know that $\tg_{\gamma'}$ contains some $\tG$-conjugate of $\tg_\gamma$, say $\Ad(g) \tg_\gamma \subseteq \tg_{\gamma'}$. However by dimension considerations we see that $\tg_{\gamma'} = \Ad(g) \tg_\gamma$. Therefore $\mathcal{O} \cap \mathcal{P}$ is an open set within which the $\tg$-stabalisers of all points are $\tG$-conjugate.
\end{proof}
The following is the main theorem of this section. 
\begin{thm}\label{genstabal} The linear form $\alpha$ is
\begin{enumerate}
\item{a generic, regular point for the action of $G_e$ on $\g_e^\ast$;}
\smallskip
\item{a generic, regular point for the action of $K_e$ on $\h_e^\ast$ when $\epsilon = -1$;}
\smallskip
\item{a generic, regular point for the action of $K_e$ on $\pp_e^\ast$ when $\epsilon = 1$.}
\end{enumerate}
\end{thm}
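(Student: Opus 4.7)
The plan is to verify, in each case, the hypothesis of Lemma~\ref{genreg}: that the orbit map $\varphi : \tG \times \V_\gamma \to W$ is dominant. The three parts are structurally identical, so I would treat part 1 in detail and then translate the argument through the $\sigma$-eigenspace decomposition for parts 2 and 3.

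For part 1 the first task is to compute $(\g_e)_\alpha = \hh$, where $\hh = \spn\{\xi_i^{i,s}\}$ is the abelian subalgebra introduced before Lemma~\ref{alpha}. The inclusion $\hh \subseteq (\g_e)_\alpha$ is immediate: $\hh$ is abelian, the cocharacter $a(t)$ acts on $\xi_i^{j,s}$ with weight $j-i$ so that $[\hh, \n^\pm] \subseteq \n^\pm$, and $\alpha \in \hh^\ast$ annihilates $\n^- \oplus \n^+$. For the reverse inclusion I would imitate Yakimova's pairing argument from \cite[$\S 5$]{Yak1}: any $\xi \in \n^- \oplus \n^+$ with $\ad^\ast(\xi)\alpha = 0$ must pair trivially with selected brackets $[\xi_k^{l,r}, \xi] \in \hh$, and the distinctness of the scalars $a_i$ forces its coefficients to vanish. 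The second task is to identify $\V_\alpha = \{\delta \in \g_e^\ast : (\g_e)_\alpha \subseteq (\g_e)_\delta\}$ with the annihilator $\hh^\ast$ of $\n^- \oplus \n^+$. The inclusion $\hh^\ast \subseteq \V_\alpha$ is immediate; for the reverse, the identity $[\xi_j^{j,0}, \xi_i^{j,s}] = \xi_i^{j,s}$ (valid for $i \neq j$) shows $[\hh, \g_e] = \n^- \oplus \n^+$, so any element of $\V_\alpha$ must annihilate $\n^- \oplus \n^+$.

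Dominance of $\varphi : G_e \times \hh^\ast \to \g_e^\ast$ now reduces to surjectivity of the differential at $(1, \alpha)$, whose image is $\ad^\ast(\g_e)\alpha + \hh^\ast$. Both summands are stable under $a(t)$: the first is supported on nonzero weight spaces, since the weight-zero part of $\g_e$ is precisely $\hh$ and already stabilises $\alpha$, whereas the second lies entirely in weight zero. Their intersection is therefore trivial, and combining with the dimension count $\dim \ad^\ast(\g_e)\alpha = \dim \g_e - \dim \hh$ from the first step gives $\dim(\ad^\ast(\g_e)\alpha + \hh^\ast) = \dim \g_e^\ast$. Lemma~\ref{genreg} then delivers part 1.

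Parts 2 and 3 follow the same blueprint with $(\g_e, G_e)$ replaced by $(\h_e, K_e)$ and the target space by $\h_e^\ast$ or $\pp_e^\ast$ respectively. The role of $\hh$ is played by its $\sigma$-fixed subspace $\hh \cap \h_e$, an analogous triangular decomposition of $\h_e$ is read off from the spanning sets of Lemma~\ref{spanningdetails}, and Lemma~\ref{albeso} (for part 2) or Lemma~\ref{albeos} (for part 3) confirms that $\alpha$ lies in the relevant invariant space. The weight argument transfers verbatim because $a(t)$ commutes with $\sigma$. The principal obstacle is the first step: checking that the pairing computation for the stabiliser remains non-degenerate in positive characteristic. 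All of the brackets involved come from matrix multiplication of the $\xi_i^{j,s}$ and are therefore characteristic-free, but one must verify that the $\sigma$-twists required in parts 2 and 3, together with the identification of $(\h_e)_\alpha$ through the refined basis suggested by Lemma~\ref{spanningdetails} and Remark~\ref{duals}, do not introduce degeneracies in $\chr(\K) \neq 2$; this is precisely the point at which the standing assumption on characteristic and the conditions $a_i = -a_{i'}$ intervene.
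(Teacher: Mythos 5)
Your proposal follows the paper's overall scheme: invoke Lemma~\ref{genreg}, restrict the orbit map to a smaller domain contained in $\V_\alpha$, and show dominance by checking surjectivity of the differential at $(1,\alpha)$. For part~1 the route you take through that last step is genuinely different and is legitimate: where the paper (Proposition~\ref{shost}) constructs $(\mathfrak{n}^-)^\ast$ and $(\mathfrak{n}^+)^\ast$ explicitly inside $\ad^\ast(\g_e)\alpha$, you argue that $\ad^\ast(\g_e)\alpha$ is concentrated in the nonzero $a(\K^\times)$-weight spaces while $\hh^\ast$ is the weight-zero space, and close by counting dimensions. Note, though, that the dimension count needs the equality $(\g_e)_\alpha = \hh$ (not just the easy inclusion $\hh \subseteq (\g_e)_\alpha$), so the ``pairing computation'' you propose to borrow from Yakimova is in substance the same explicit calculation the paper does in Proposition~\ref{shost}: you are not saving work, only repackaging it. Your determination of $\V_\alpha$ is also more than is needed; the inclusion $\hh^\ast \subseteq \V_\alpha$ already suffices.

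The serious problem is in parts~2 and~3: the claim that the weight argument ``transfers verbatim because $a(t)$ commutes with $\sigma$'' is false. The cocharacter $a(t)w_i = t^i w_i$ does not lie in $K$, since preservation of the form would force $(a(t)e^s w_i, a(t)e^r w_{i'}) = t^{i+i'}(e^s w_i, e^r w_{i'})$ to be independent of $t$, i.e.\ $i + i' = 0$, which never holds. Concretely, by~(\ref{sigmaaction}) the two summands of $\zeta_i^{j,s}$ carry $a$-weights $j - i$ and $i' - j'$, which disagree already when $i = i'$, $j = j'$, $i \neq j$, so $\zeta_i^{j,s}$ is not an $a$-weight vector and the subspaces $\h_e$, $\pp_e$, $\h_e^\ast$, $\pp_e^\ast$ are not sums of $a$-weight spaces. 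Nor can this be repaired by switching to a cocharacter valued in $K_e$: any such $b(t)w_i = t^{c_i}w_i$ must satisfy $c_i + c_{i'} = 0$, forcing $c_i = 0$ whenever $i = i'$, and as soon as two distinct indices are self-paired the $b$-weight-zero subspace of $\h_e$ strictly contains $\hh \cap \h_e$, destroying the direct-sum decomposition your argument relies on. So the weight trick is unavailable in types $B$, $C$ and $D$; you need to show $\spn\{(\zeta_i^{j,s})^\ast : i\neq j\} \subseteq \ad^\ast(\h_e)\alpha$ (resp.\ the $\eta$-version for part~3) directly, which is what the paper's Propositions~\ref{alphaKegeneric} and~\ref{alphaKegeneric2} do by explicit computation using the refined constraints $l_ia_i = l_ja_j \Rightarrow i = j$.
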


The method is to satisfy the assumptions of Lemma~\ref{genreg}. As was the case in Section~\ref{JacLoc}, the arguments in the symplectic and orthogonal cases are almost identical. As such we shall prove parts 1 and 2 quite explicitly, and the describe how those arguments may be adapted to prove part 3.

Recall the dual decompositions $\g_e = \mathfrak{n}^- \oplus \mathfrak{h} \oplus \mathfrak{n}^+$ and $\g_e^\ast = (\mathfrak{n}^-)^\ast \oplus \mathfrak{h}^\ast \oplus (\mathfrak{n}^+)^\ast$ which were defined in Section~\ref{JacLoc1}, and recall also that $\alpha \in \mathfrak{h}^\ast$. Yakimova proves in \cite[Theorem~1]{Yak1} that $(\g_e)_\alpha = \hh$.

\begin{prop}\label{shost}
The following map is dominant
\begin{eqnarray*}
\vartheta : & G_e \times \mathfrak{h}^\ast \ra \g_e^\ast \\
\vartheta : & (g, \gamma) \mapsto \Ad^\ast(g) \gamma. 
\end{eqnarray*}
\end{prop}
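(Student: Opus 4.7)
The plan is to reduce dominance of $\vartheta$ to the surjectivity of its differential at the point $(1, \alpha) \in G_e \times \mathfrak{h}^\ast$. Over any field, if the differential of a morphism of irreducible varieties is surjective at some point then the morphism is smooth there, hence the image contains a Zariski-open neighbourhood of that point and so is dominant. Under the identification $T_{(1,\alpha)}(G_e \times \mathfrak{h}^\ast) \cong \g_e \oplus \mathfrak{h}^\ast$ (via left translation in the first factor), a direct calculation gives
$$d\vartheta_{(1,\alpha)}(X, \delta) \;=\; \ad^\ast(X)\alpha + \delta, \qquad X \in \g_e,\ \delta \in \mathfrak{h}^\ast,$$
so the task becomes to show $\ad^\ast(\g_e)\alpha + \mathfrak{h}^\ast = \g_e^\ast = (\mathfrak{n}^-)^\ast \oplus \mathfrak{h}^\ast \oplus (\mathfrak{n}^+)^\ast$.

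Next I would invoke Yakimova's equality $(\g_e)_\alpha = \mathfrak{h}$ (cited immediately before the proposition). The linear map $X \mapsto \ad^\ast(X)\alpha$ has kernel $(\g_e)_\alpha = \mathfrak{h}$, so
$$\dim \ad^\ast(\g_e)\alpha \;=\; \dim \g_e - \dim \mathfrak{h} \;=\; \dim \mathfrak{n}^- + \dim \mathfrak{n}^+.$$
Moreover for any $X \in \g_e$ and $Y \in \mathfrak{h} = (\g_e)_\alpha$ we have $\alpha([Y,X]) = 0$, and therefore $(\ad^\ast(X)\alpha)(Y) = -\alpha([X,Y]) = 0$. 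Hence $\ad^\ast(\g_e)\alpha$ is contained in the annihilator of $\mathfrak{h}$, which is precisely $(\mathfrak{n}^-)^\ast \oplus (\mathfrak{n}^+)^\ast$.

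Finally, a quick bookkeeping observation shows $\dim \mathfrak{n}^- = \dim \mathfrak{n}^+$: swapping $i$ and $j$ in the defining range $\lambda_j - \min(\lambda_i,\lambda_j) \le s < \lambda_j$ is a bijection between bases (using $\min(\lambda_i,\lambda_j) = \min(\lambda_j,\lambda_i)$). Thus $\dim((\mathfrak{n}^-)^\ast \oplus (\mathfrak{n}^+)^\ast) = \dim \mathfrak{n}^- + \dim \mathfrak{n}^+ = \dim \ad^\ast(\g_e)\alpha$, so the inclusion is an equality, and adding $\mathfrak{h}^\ast$ yields the full decomposition $\g_e^\ast$. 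The differential at $(1,\alpha)$ is therefore surjective, and $\vartheta$ is dominant. There is no substantive obstacle: the hard content—that $(\g_e)_\alpha = \mathfrak{h}$—is imported from Yakimova's work, and the rest is dimension counting using the triangular decomposition introduced in Section~\ref{JacLoc1}.
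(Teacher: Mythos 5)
Your proof is correct, and it takes a genuinely different route from the paper's. The paper also reduces to surjectivity of the differential at $(1,\alpha)$ (citing Springer and Tauvel--Yu for those standard steps), but then establishes $(\mathfrak{n}^-)^\ast \oplus (\mathfrak{n}^+)^\ast \subseteq \ad^\ast(\g_e)\alpha$ by a direct calculation: writing out $\ad^\ast(\xi_i^{j,s})\alpha = a_i(\xi_j^{i,\lambda_i-1-s})^\ast - a_j(\xi_j^{i,\lambda_j-1-s})^\ast$ and running a descending induction on $s$, first for $\lambda_i=\lambda_j$ and then for $\lambda_i<\lambda_j$. You instead argue indirectly: the kernel of $X\mapsto\ad^\ast(X)\alpha$ is $(\g_e)_\alpha = \mathfrak{h}$, the image annihilates $\mathfrak{h}$, and a dimension count forces the image to be all of $(\mathfrak{n}^-)^\ast \oplus (\mathfrak{n}^+)^\ast$. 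This is shorter and more conceptual, but it offloads the entire content onto Yakimova's equality $(\g_e)_\alpha=\mathfrak{h}$, which in the paper is quoted from a source working over $\C$; in fact the equality is a characteristic-free piece of linear algebra that drops out of the very same formula for $\ad^\ast(\xi_i^{j,s})\alpha$ the paper computes, so the explicit route is essentially self-contained and doubles as a verification of $(\g_e)_\alpha=\mathfrak{h}$ over $\K$. The paper's style also transfers verbatim to the symplectic and orthogonal analogues (Propositions~\ref{alphaKegeneric} and \ref{alphaKegeneric2}), where the explicit computation of $\ad^\ast(\zeta_i^{j,s})\alpha$ is in any case required; your argument would work there too, but only after independently establishing the analogue of $(\g_e)_\alpha=\mathfrak{h}$ in those settings. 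Two small remarks: the observation $\dim\mathfrak{n}^- = \dim\mathfrak{n}^+$ is true but not needed (all you use is $\dim\g_e - \dim\mathfrak{h} = \dim\mathfrak{n}^- + \dim\mathfrak{n}^+$), and it is worth noting, as you implicitly do, that $G_e$ is connected so that the target $\g_e^\ast$ being irreducible suffices for the open-image-implies-dominant step.
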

\begin{proof}
By \cite[Theorem 3.2.20(i)]{Spr} it suffices to show that the differential $d_{(1, \alpha)} \vartheta : \g_e \oplus \hh^\ast \ra \g_e^\ast$ is surjective. In \cite[Lemma~1.6]{TY} the differential is calculated $$d_{(1,\alpha)} \vartheta (x, v) = \ad^\ast(x) \alpha + v.$$ Therefore $\hh^\ast \subseteq d_{(1,\alpha)} \vartheta (\g_e, \hh^\ast)$. To complete the proof we shall show that $(\mathfrak{n}^-)^\ast, (\mathfrak{n}^+)^\ast \subseteq \ad^\ast(\g_e)\alpha =d_{(1,\alpha)} \vartheta (\g_e, 0)$. A quick calculation shall confirm that 
\begin{eqnarray}\label{adform}
\ad^\ast (\xi_i^{j,s}) (\xi_k^{l,r})^\ast = \delta_{ik} (\xi_j^{l,r-s})^\ast - \delta_{jl} (\xi_k^{i,r-s})^\ast
\end{eqnarray}
and that
\begin{eqnarray}\label{eqnn}
\ad^\ast(\xi_i^{j,s}) \alpha = a_i (\xi_j^{i,\lambda_i-1-s})^\ast - a_j (\xi_j^{i,\lambda_j-1-s})^\ast.
\end{eqnarray} Fix $i > j$. If $\lambda_i = \lambda_j$ then the restriction $a_i \neq a_j$ implies that $(\xi_j^{i,s})^\ast \in \ad^\ast(\mathfrak{n}^+)\alpha$ for $s = 0,1,...,\lambda_i$. If $\lambda_i < \lambda_j$ then substituting $s=\lambda_j-1$ into equation (\ref{eqnn}) gives $(\xi_j^{i,0})^\ast \in \ad^\ast(\mathfrak{n}^+)\alpha$. Substituting successively smaller values of $s$ into equation (\ref{eqnn}) we obtain by induction $(\xi_j^{i,s})^\ast \in \ad^\ast(\mathfrak{n}^+)\alpha$ for all $s = 0,1,..., \lambda_i -1$. We conclude that $(\mathfrak{n}^-)^\ast \subseteq \ad^\ast(\mathfrak{n}^+) \alpha$. An identical argument shows that $(\mathfrak{n}^+)^\ast \subseteq \ad^\ast(\mathfrak{n}^-) \alpha$, completing the proof.
\end{proof}
In order to prove analogues of the above proposition for classical subalgebras of $\g$ we must first place further restriction on the coefficients $a_i$ which appear in the definition of $\alpha$. Define $$l_i = \left\{ \begin{array}{ll}
         1 & \mbox{if $i=i'$};\\
        2 & \mbox{if $i \neq i'$}.\end{array} \right.$$
and impose the restriction $l_i a_i = l_j a_j$ only if $i = j$.

\begin{prop}\label{alphaKegeneric}
The following map is dominant when $\epsilon = -1$
\begin{eqnarray*}
\vartheta : & K_e \times (\mathfrak{h}^\ast\cap \h_e^\ast) \ra \h_e^\ast \\
\vartheta : & (g, \gamma) \mapsto \Ad^\ast(g) \gamma. 
\end{eqnarray*}
\end{prop}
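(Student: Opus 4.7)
The plan is to mirror the proof of Proposition~\ref{shost}. By \cite[Theorem~3.2.20(i)]{Spr} it suffices to exhibit a point at which the differential of $\vartheta$ is surjective, and the natural candidate is $(1,\alpha)$. Since $\alpha \in \hh^\ast \cap \h_e^\ast$ by Lemma~\ref{albeso}, this point does lie in the source. Using the computation \cite[Lemma~1.6]{TY} (which is characteristic free) the differential is
\begin{eqnarray*}
d_{(1,\alpha)}\vartheta(x,v) = \ad^\ast(x)\alpha + v, \qquad x \in \h_e,\; v \in \hh^\ast \cap \h_e^\ast.
\end{eqnarray*}
Thus the image automatically contains $\hh^\ast \cap \h_e^\ast$, and the problem reduces to showing that $\ad^\ast(\h_e)\alpha$ surjects onto a complement of $\hh^\ast \cap \h_e^\ast$ in $\h_e^\ast$.

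To set this up, restrict the triangular decomposition of Section~\ref{JacLoc1} to $\h_e$, writing $\h_e = (\h_e \cap \mathfrak{n}^-) \oplus (\h_e \cap \hh) \oplus (\h_e \cap \mathfrak{n}^+)$, with the dual decomposition giving a complement of $\hh^\ast \cap \h_e^\ast$ inside $\h_e^\ast$ equal to $(\mathfrak{n}^-)^\ast \cap \h_e^\ast$ direct sum with $(\mathfrak{n}^+)^\ast \cap \h_e^\ast$. I would then compute $\ad^\ast(\zeta_i^{j,s})\alpha$ directly using formula (\ref{adform}) of the preceding proof, which yields
\begin{eqnarray*}
\ad^\ast(\zeta_i^{j,s})\alpha = a_i(\xi_j^{i,s})^\ast - a_j(\xi_j^{i,\lambda_j - \lambda_i + s})^\ast + \varepsilon_{i,j,s}\bigl(a_{j'}(\xi_{i'}^{j',s})^\ast - a_{i'}(\xi_{i'}^{j',\lambda_i - \lambda_j + s})^\ast\bigr),
\end{eqnarray*}
and then combine these with the symmetry relation $\varepsilon_{i,j,s} = \varepsilon_{j',i',s}$ of Lemma~\ref{spanningdetails} to rewrite each such image as a scalar multiple of some $(\zeta_j^{i,t})^\ast$, with scalar controlled by $a_i,a_j,a_{i'},a_{j'}$ and the $l_i$. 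The constraints $a_i = -a_{i'}$ and the newly imposed non-degeneracy condition $l_i a_i = l_j a_j \Rightarrow i = j$ then guarantee that these scalars are nonzero whenever the target vector is nonzero.

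From here the argument finishes exactly as in Proposition~\ref{shost}: for $i>j$ one runs an induction on $s$ starting from the largest admissible value of $s$ (so that the term at $\lambda_j-\lambda_i+s$ falls outside the valid range and vanishes), obtaining every $(\zeta_j^{i,s})^\ast$ in $\ad^\ast(\h_e \cap \mathfrak{n}^+)\alpha$, and symmetrically for $i<j$. Together with the $\hh^\ast\cap \h_e^\ast$ portion supplied by the $v$ variable, this exhausts $\h_e^\ast$ and proves surjectivity of the differential, hence dominance of $\vartheta$.

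The main obstacle I expect is bookkeeping. The $\zeta_i^{j,s}$ are only a spanning set, not a basis (Lemma~\ref{spanningdetails}(3)), so one must make sure the images produced above live in the quotient by the redundancy relations $\zeta_i^{j,s} = \varepsilon_{i,j,s}\zeta_{j'}^{i',s}$ consistently, and that the inductive descent in $s$ does not accidentally collide with the orbit of the involution $i \mapsto i'$. Verifying that the chosen scalars are nonzero in this quotient is where the restriction $l_i a_i = l_j a_j \Rightarrow i = j$ is actually used; otherwise a cancellation between the two terms of $\ad^\ast(\zeta_i^{j,s})\alpha$ could occur precisely when $j'=i$ or similar coincidences force the bracket $a_i + \varepsilon\, a_{i'}$ to vanish.
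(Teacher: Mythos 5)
Your proposal follows the paper's own proof essentially step for step: invoke \cite[Theorem~3.2.20(i)]{Spr} to reduce to surjectivity of $d_{(1,\alpha)}\vartheta$, use \cite[Lemma~1.6]{TY} to see that $\hh^\ast\cap\h_e^\ast$ is automatically in the image, compute $\ad^\ast(\zeta_i^{j,s})\alpha$ from formula (\ref{adform}), and then run an induction on $s$ (together with the relation $\zeta_i^{j,s} = \varepsilon_{i,j,s}\zeta_{j'}^{i',s}$ from Lemma~\ref{spanningdetails}) using the nondegeneracy of the $l_i a_i$. You also correctly anticipate where the condition $l_i a_i = l_j a_j \Rightarrow i=j$ is needed and where the redundancy relations and the involution $i\mapsto i'$ demand care. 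One small slip worth flagging: your displayed expression for $\ad^\ast(\zeta_i^{j,s})\alpha$ has the roles of $a_i$ and $a_j$ interchanged (and likewise $a_{i'}$, $a_{j'}$); unwinding $\zeta_i^{j,s} = \xi_i^{j,\lambda_j-1-s}+\varepsilon_{i,j,s}\xi_{j'}^{i',\lambda_i-1-s}$ against (\ref{eqnn}) gives
\begin{eqnarray*}
\ad^\ast(\zeta_i^{j,s})\alpha = a_i(\xi_j^{i,\lambda_i-\lambda_j+s})^\ast - a_j(\xi_j^{i,s})^\ast + \varepsilon_{i,j,s}\bigl(a_{j'}(\xi_{i'}^{j',\lambda_j-\lambda_i+s})^\ast - a_{i'}(\xi_{i'}^{j',s})^\ast\bigr),
\end{eqnarray*}
which, after collecting into $(\zeta_j^{i,t})^\ast$ via Lemma~\ref{albeso}, is the paper's $\ad^\ast(\zeta_i^{j,s})\alpha = l_i a_i(\zeta_j^{i,\lambda_j-1-s})^\ast - l_j a_j(\zeta_j^{i,\lambda_i-1-s})^\ast$. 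Correspondingly, the base case of the induction is at $s=0$ (where the $a_i$-term falls out of range when $\lambda_i<\lambda_j$) rather than at the largest admissible $s$; this direction reversal is forced by fixing the index slip, but it does not change the substance of the argument.
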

\begin{proof}
The proof is very similar to that of Proposition~\ref{shost}. Once again we may prove that the differential $d_{(1, \alpha)} \vartheta$ is surjective. The differential is $d_{(1,\alpha)} \vartheta (x,v) = \ad^\ast(x) \gamma + v$. Since $d_{(1,\alpha)} \vartheta (0,\hh^\ast\cap \h_e^\ast) = \mathfrak{h}^\ast \cap \h_e^\ast = \spn\{ (\zeta_i^{i,s})^\ast\}$ we may complete the proof by showing that $\spn\{(\zeta_i^{j,s})^\ast : i \neq j\}\subseteq \ad^\ast(\h_e) \alpha$. Once again we shall need explicit calculations. From formula (\ref{adform}) it follows that
\begin{eqnarray*}
\ad^\ast (\zeta_i^{j,s}) (\zeta_k^{l,r})^\ast & = & \delta_{ik} (\zeta_j^{l, \lambda_j - 1 + r-s})^\ast - \delta_{jl} (\zeta_k^{i, \lambda_i - 1 +r-s})^\ast \\
& + & \delta_{il'} \varepsilon_{klr}(\zeta_j^{k', \lambda_j - 1+r-s})^\ast - \delta_{jk'} \varepsilon_{klr} (\zeta_{l'}^{i,\lambda_i - 1 +r-s})^\ast 
\end{eqnarray*}
Recall that there is an expression for $\alpha$ in terms of $(\zeta_i^{i,0})^\ast$ derived in Lemma~ \ref{albeso} $$\alpha = \frac{1}{2} \sum_{i = i'} a_i(\zeta_i^{i,0})^\ast +  \sum_{i < i'}a_i(\zeta_i^{i,0})^\ast = \sum_{i \leq i'}a_i (1 - \frac{1}{2} \delta_{i,i'}) (\zeta_i^{i,0})^\ast.$$
By Lemma~\ref{spanningdetails} we have $\varepsilon_{i,i,0} = (-1)^{\lambda_i}$ and as a consequence we have
\begin{eqnarray*}
\ad^\ast(\zeta_i^{j,s}) \alpha & = & (1 - \frac{1}{2}\delta_{i,i'}) a_i  (\zeta_j^{i,\lambda_j - 1 - s})^\ast - (1 - \frac{1}{2}\delta_{j,j'})a_j (\zeta_j^{i,\lambda_i - 1 - s})^\ast\\
& - & \varepsilon_{i',i',0} (1 - \frac{1}{2}\delta_{i,i'}) a_{i'}  (\zeta_j^{i,\lambda_j - 1 - s})^\ast - \varepsilon_{j',j',0}(1 - \frac{1}{2}\delta_{j,j'})a_j (\zeta_j^{i,\lambda_i - 1 - s})^\ast\\
& = & (1 - \frac{1}{2}\delta_{i,i'}) (a_i + (-1)^{\lambda_i}a_{i'})  (\zeta_j^{i,\lambda_j - 1 - s})^\ast - (1 - \frac{1}{2}\delta_{j,j'})(a_j + (-1)^{\lambda_j}a_{j'}) (\zeta_j^{i,\lambda_i - 1 - s})^\ast
\end{eqnarray*}
By Lemma~\ref{nilpotents}, $\lambda_i$ is even if and only if $i = i'$. Furthermore $a_i = a_{i'}$ whenever $i=i'$, and $a_i = -a_{i'}$ whenever $i\neq i'$. In either case $a_i + (-1)^{\lambda_i} a_{i'} = 2a_i$. We deduce that $$\ad^\ast(\zeta_i^{j,s}) \alpha = l_i a_i (\zeta_j^{i,\lambda_j-1-s})^\ast - l_j a_j (\zeta_j^{i,\lambda_i-1-s})^\ast.$$ Fix $i\neq j$. Suppose $\lambda_i = \lambda_j$. Then the restrictions imposed on the coefficients $a_i$ and $l_i$ ensure that $(\zeta_j^{i,s})^\ast \in \ad^\ast(\h_e)\alpha$ for $s=0,1,...,\lambda_i-1$. Now suppose $\lambda_i < \lambda_j$. Taking $s=0$ we get $-l_ja_j(\zeta_j^{i,\lambda_i-1})^\ast \in \ad^\ast(\h_e)\alpha$. Taking successively larger values of $s$ we obtain $\zeta_j^{i,\lambda_i-1-s} \in \ad^\ast(\h_e)\alpha$ for $s=0,1,...,\lambda_i-1$ by induction. Now suppose $\lambda_i > \lambda_j$. It follows that  $j' \neq i < j$ so that $i' < j'$. By part 3 of Lemma~\ref{spanningdetails} we have $\zeta_i^{j,s} = \varepsilon_{i,j,s} \zeta_{j'}^{i',s}$ which lies in $\ad^\ast(\h_e) \alpha$ by our previous remarks. We conclude that $\spn\{(\zeta_i^{j,s})^\ast : i \neq j\}\subseteq \ad^\ast(\h_e) \alpha$ and $d_{(1, \alpha)} \varphi (\h_e, \mathfrak{h}^\ast \cap \h_e^\ast) = \h_e^\ast$, which completes the proof.
\end{proof}
We now state the equivalent proposition for orthogonal algebras. The reader will notice that we denote the canonical representation of $K_e$ in $\pp_e^\ast$ by $\Ad^\ast$. Technically this representation is the restriction of the coadjoint representation of $G_e$, although this will cause no confusion.
\begin{prop}\label{alphaKegeneric2}
The following map is dominant when $\epsilon = 1$
\begin{eqnarray*}
\vartheta : & K_e \times (\mathfrak{h}^\ast\cap \pp_e^\ast) \ra \pp_e^\ast \\
\vartheta : & (g, \gamma) \mapsto \Ad^\ast(g) \gamma. 
\end{eqnarray*}
\end{prop}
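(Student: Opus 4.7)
The plan is to mimic the proof of Proposition~\ref{alphaKegeneric} essentially verbatim, with $\zeta$ replaced by $\eta$ on the target side and a single sign adjustment in the key identity. By \cite[Theorem~3.2.20(i)]{Spr} it suffices to show that the differential
\[
d_{(1,\alpha)}\vartheta : \h_e \oplus (\hh^\ast \cap \pp_e^\ast) \longrightarrow \pp_e^\ast, \qquad (x,v)\mapsto \ad^\ast(x)\alpha + v,
\]
is surjective. The second summand gives the diagonal part $\hh^\ast \cap \pp_e^\ast = \spn\{(\eta_i^{i,s})^\ast\}$ directly, so the task reduces to showing $\spn\{(\eta_i^{j,s})^\ast : i\neq j\} \subseteq \ad^\ast(\h_e)\alpha$.

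The central step is to rework the calculation of $\ad^\ast(\zeta_i^{j,s})\alpha$, now expanded in the $(\eta_k^{l,r})^\ast$-spanning set. Combining formula~(\ref{adform}) with the defining expressions $\zeta_a^{b,c} = \xi_a^{b,\lambda_b-1-c} + \varepsilon_{a,b,c}\xi_{b'}^{a',\lambda_a-1-c}$ and $(\eta_k^{l,r})^\ast = (\xi_k^{l,\lambda_l-1-r})^\ast - \varepsilon_{k,l,r}(\xi_{l'}^{k',\lambda_k-1-r})^\ast$, a direct expansion produces the same four-term shape as in Proposition~\ref{alphaKegeneric}; the opposite sign appearing in the definition of $(\eta)^\ast$ (as compared to $(\zeta)^\ast$) changes two of the relative signs and turns the critical identity into $a_i - (-1)^{\lambda_i}a_{i'} = 2a_i$. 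By Lemma~\ref{nilpotents} we have $i = i'$ in the orthogonal case if and only if $\lambda_i$ is odd, and combined with the convention $a_i = -a_{i'}$ when $i \neq i'$ this identity is valid in both cases. Using the expression for $\alpha$ supplied by Lemma~\ref{albeos}, the computation collapses to
\[
\ad^\ast(\zeta_i^{j,s})\alpha = \tilde{l}_i a_i (\eta_j^{i,\lambda_j-1-s})^\ast - \tilde{l}_j a_j (\eta_j^{i,\lambda_i-1-s})^\ast,
\]
where $\tilde{l}_i = 2$ if $i = i'$ and $\tilde{l}_i = 1$ if $i\neq i'$. If necessary one strengthens the genericity hypothesis on the $a_i$ to $\tilde{l}_i a_i \neq \tilde{l}_j a_j$ for $i\neq j$, which costs nothing since generic scalars satisfy it.

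With this formula the argument concludes exactly as in the final paragraph of Proposition~\ref{alphaKegeneric}: for $i \neq j$ with $\lambda_i = \lambda_j$ the non-equality $\tilde{l}_i a_i \neq \tilde{l}_j a_j$ yields every $(\eta_j^{i,s})^\ast \in \ad^\ast(\h_e)\alpha$; for $\lambda_i < \lambda_j$ one extracts $(\eta_j^{i,\lambda_i-1})^\ast$ by setting $s = 0$ and descends on $s$ by induction; and the case $\lambda_i > \lambda_j$ follows from the relation $\eta_i^{j,s} = -\varepsilon_{i,j,s}\eta_{j'}^{i',s}$ of Lemma~\ref{spanningdetails}(4), swapping indices to the already-settled case $\lambda_{i'} < \lambda_{j'}$. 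This gives $\spn\{(\eta_i^{j,s})^\ast : i\neq j\} \subseteq \ad^\ast(\h_e)\alpha$ and hence surjectivity of $d_{(1,\alpha)}\vartheta$. The only genuine obstacle is the sign bookkeeping that replaces $a_i + (-1)^{\lambda_i} a_{i'}$ with $a_i - (-1)^{\lambda_i} a_{i'}$; once that is verified, every remaining step transplants from Proposition~\ref{alphaKegeneric} without modification.
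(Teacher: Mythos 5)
Your proposal is correct and follows essentially the same route as the paper: check surjectivity of the differential by reducing to $\spn\{(\eta_i^{j,s})^\ast : i\neq j\}\subseteq \ad^\ast(\h_e)\alpha$, and establish this by the same expansion-of-$\ad^\ast$ argument. Your observation that the sign flip in the definition of $(\eta)^\ast$ turns $a_i+(-1)^{\lambda_i}a_{i'}$ into $a_i-(-1)^{\lambda_i}a_{i'}$, which still equals $2a_i$ once Lemma~\ref{nilpotents} and the constraint $a_i=-a_{i'}$ are invoked, is exactly the point the paper makes (and you correctly write $\eta$ in the final formula where the paper has a harmless typo carrying over a $\zeta$).

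One small quibble: your prefactor $\tilde{l}_i$ (equal to $2$ when $i=i'$ and $1$ otherwise) is the reverse of the paper's $l_i$, and tracking the calculation gives $(1-\tfrac12\delta_{i,i'})\cdot 2a_i = (2-\delta_{i,i'})a_i = l_ia_i$ with the paper's convention, so the coefficient you should obtain is $l_i$ rather than $\tilde{l}_i$. Your hedge — replacing the earlier genericity hypothesis by $\tilde{l}_ia_i\neq \tilde{l}_ja_j$ — is indeed harmless since only genericity of the $a_i$ is ever used, but it slightly obscures the fact that the constraint already imposed before Proposition~\ref{alphaKegeneric} is precisely what the computation requires; you should resolve the sign bookkeeping rather than insert an auxiliary hypothesis.
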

\begin{proof}
Following the previous line of reasoning, the proof hinges on showing that $\spn\{(\eta_i^{j,s})^\ast : i \neq j\}\subseteq \ad^\ast(\h_e) \alpha$. Once again we shall need explicit calculations. From formula (\ref{adform}) it follows that
\begin{eqnarray*}
\ad^\ast (\zeta_i^{j,s}) (\eta_k^{l,r})^\ast & = & \delta_{ik} (\eta_j^{l, \lambda_j - 1 + r-s})^\ast - \delta_{jl} (\eta_k^{i, \lambda_i - 1 +r-s})^\ast \\
& - & \delta_{il'} \varepsilon_{klr}(\eta_j^{k', \lambda_j - 1+r-s})^\ast - \delta_{jk'} \varepsilon_{klr} (\eta_{l'}^{i,\lambda_i - 1 +r-s})^\ast 
\end{eqnarray*}
Recall that there is an expression for $\alpha$ in terms of $(\eta_i^{i,0})^\ast$ derived in Lemma~\ref{albeso}. Using this with the above expression for $\ad^\ast (\zeta_i^{j,s}) (\eta_k^{l,r})^\ast$ and going through a series of calculations almost identical to those in Proposition~\ref{alphaKegeneric} we arrive at the assertion $$\ad^\ast(\zeta_i^{j,s}) \alpha = l_i a_i (\zeta_j^{i,\lambda_j-1-s})^\ast - l_j a_j (\zeta_j^{i,\lambda_i-1-s})^\ast.$$ The proof then concludes by making precisely the same observations as those concluding the previous proposition.

\end{proof}

We may now supply a proof Theorem~\ref{genstabal}.
\begin{proof}
In the notation of Lemma~\ref{genreg} we must show that $\varphi : G_e \times \V_\alpha \ra \g_e^\ast$ is dominant where $\V_\alpha = \{\gamma \in \g_e^\ast : (\g_e)_\alpha \subseteq (\g_e)_\gamma\}$. By \cite[Theorem~1]{Yak1} we have $(\g_e)_\alpha = \mathfrak{h}$. Since $\mathfrak{h}$ is abelian and stabalises $\n^-$ and $\n^+$, any linear form $\gamma \in \mathfrak{h}^\ast$ is annihilated by $\mathfrak{h}$. As such $\mathfrak{h}^\ast \subseteq \V_\alpha$. By Proposition~\ref{shost} the map $\vartheta = \varphi|_{G_e\times \mathfrak{h}^\ast}$ is dominant, and so too is $\varphi$. By Lemma~\ref{genreg}, part 1 of the theorem follows. 

The proofs of part 2 and 3 are similar. We shall reset the definition of $\varphi$, $\vartheta$ and $\V_\alpha$. Let $\epsilon = -1$, let $\V_\alpha = \{ \gamma\in \h_e^\ast : (\h_e)_\alpha \subseteq (\h_e)_\gamma\}$ and let $\varphi : K_e \times \V_\alpha \ra \h_e^\ast$. Since $(\h_e)_\alpha = \hh \cap \h_e$ is abelian and preserves $\spn\{\zeta_i^{j,s} : i\neq j\}$ we have $\hh^\ast\cap \h_e^\ast \subseteq \V_\alpha$. Then by Proposition~\ref{alphaKegeneric} the map $\vartheta = \varphi|_{K_e \times (\hh^\ast\cap\h_e^\ast)}$ is dominant and so is $\varphi$. Then by Lemma~\ref{genreg}, part 2 of the current theorem follows.

For part 3 we set $\epsilon = 1$, $\V_\alpha = \{\gamma \in \pp_e^\ast : (\h_e)_\alpha\subseteq (\h_e)_\gamma\}$ and $\varphi : K_e \times \V_\alpha \ra \pp_e^\ast$. In this case $(\h_e)_\alpha = \hh\cap \h_e$ annihilates $\hh \cap \pp_e$ and stabalises $\spn \{ \eta_i^{j,s} : i\neq j\}$ so $\hh^\ast\cap \pp_e^\ast \subseteq \V_\alpha$ and so by Proposition~\ref{alphaKegeneric2} the map $\varphi$ is dominant. The theorem follows by Lemma~\ref{genreg}.
\end{proof}

\begin{cor}\label{orthindex}
$\ind(\h_e, \pp_e) = \frac{1}{2}(N - |\{i : \lambda_i \text{ \emph{odd}}\}|)$ when $\epsilon = 1$.
\end{cor}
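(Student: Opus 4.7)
The plan is to combine Theorem~\ref{genstabal}(3) with an explicit dimension count. Since that theorem provides $\alpha$ as a generic regular point for the $K_e$-action on $\pp_e^\ast$, we have $\ind(\h_e, \pp_e) = \dim(\h_e)_\alpha$, reducing the problem to computing this single number.

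First I would note that $\mathfrak{h} \cap \h_e \subseteq (\h_e)_\alpha$: since $\mathfrak{h}$ is abelian and $\alpha \in \mathfrak{h}^\ast$, formula~(\ref{eqnn}) specialised to $i = j$ gives $\ad^\ast(\xi_i^{i,s})\alpha = 0$ for every basis vector of $\mathfrak{h}$. To promote this to equality (and to nail down the exact dimension) I would invoke the tangent-space calculation from the proof of Proposition~\ref{alphaKegeneric2}, which identifies $\ad^\ast(\h_e)\alpha$ as the off-diagonal part $\spn\{(\eta_i^{j,s})^\ast : i \neq j\}$ of $\pp_e^\ast$; hence
$$\dim(\h_e)_\alpha = \dim\h_e - \dim\pp_e + \dim(\mathfrak{h}\cap\pp_e).$$

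The remainder is combinatorial bookkeeping. Using the spanning elements $\zeta_i^{i,s}$ together with the relation $\zeta_i^{i,s} = \varepsilon_{i,i,s}\zeta_{i'}^{i',s}$ from Lemma~\ref{spanningdetails}, the identity $\varepsilon_{i,i,s} = (-1)^{\lambda_i - s}$, and Lemma~\ref{nilpotents} (for $\epsilon = 1$, $i = i'$ iff $\lambda_i$ is odd), each pair $\{i,i'\}$ with $i \neq i'$ contributes $\lambda_i$ linearly independent vectors, while each fixed point $i = i'$ contributes the non-zero $\zeta_i^{i,s}$ for the $(\lambda_i - 1)/2$ odd values of $s$. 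This gives $\dim(\mathfrak{h}\cap\h_e) = \tfrac{1}{2}(N - |\{i : \lambda_i \text{ odd}\}|)$, and an identical argument with $\eta_i^{i,s} = -\varepsilon_{i,i,s}\eta_{i'}^{i',s}$ yields $\dim(\mathfrak{h}\cap\pp_e) = \tfrac{1}{2}(N + |\{i : \lambda_i \text{ odd}\}|)$. Pairing the off-diagonal spanning vectors $\zeta_i^{j,s}$ with $\eta_i^{j,s}$ under the involution $(i,j,s)\mapsto(j',i',s)$ and noting that, for each fixed pair $(i,j) = (j',i')$, exactly half of the values of $s$ yield a non-zero $\zeta_i^{j,s}$ and the other half yield a non-zero $\eta_i^{j,s}$, we find that $\h_e$ and $\pp_e$ have equal off-diagonal dimensions; therefore $\dim\h_e - \dim\pp_e = \dim(\mathfrak{h}\cap\h_e) - \dim(\mathfrak{h}\cap\pp_e) = -|\{i : \lambda_i \text{ odd}\}|$. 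Substituting into the displayed formula collapses everything to $\tfrac{1}{2}(N - |\{i : \lambda_i \text{ odd}\}|)$.

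The main obstacle is the combinatorial accounting above: converting the parity conditions on $\lambda_i$ supplied by Lemma~\ref{nilpotents} into parity conditions on $s$, matching the vanishings of $\zeta_i^{i,s}$ and $\eta_i^{i,s}$ under the involution $i\mapsto i'$, and aligning the off-diagonal contributions in $\h_e$ and $\pp_e$. Everything else is a direct consequence of Theorem~\ref{genstabal}(3) and the explicit coadjoint calculation already carried out in Proposition~\ref{alphaKegeneric2}.
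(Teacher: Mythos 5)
Your proof is correct and follows the paper's route in all essentials: both reduce to computing $\dim(\h_e)_\alpha$ by invoking Theorem~\ref{genstabal}(3), and both finish with the identical parity count on the spanning vectors $\zeta_i^{i,s}$ (using the relations of Lemma~\ref{spanningdetails} and the parity constraint of Lemma~\ref{nilpotents}) to get $\dim(\mathfrak{h}\cap\h_e) = \tfrac{1}{2}(N - |\{i : \lambda_i \text{ odd}\}|)$. The one place you diverge is the justification of $\dim(\h_e)_\alpha = \dim(\mathfrak{h}\cap\h_e)$. The paper simply asserts $(\h_e)_\alpha = \mathfrak{h}\cap\h_e$; this is immediate from Yakimova's $(\g_e)_\alpha = \mathfrak{h}$ (cited in Section~\ref{JacLoc1}, and applicable since the constraint $l_i a_i = l_j a_j \Rightarrow i=j$ still forces the $a_i$ to be pairwise distinct), together with the observation that $\ad^\ast(\h_e)\alpha \subseteq \pp_e^\ast$, so that the $\pp_e^\ast$-stabiliser of $\alpha$ in $\h_e$ equals $\h_e \cap (\g_e)_\alpha$. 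You instead rederive the dimension via rank-nullity, the identification of $\ad^\ast(\h_e)\alpha$ with the off-diagonal part of $\pp_e^\ast$ (the proof of Proposition~\ref{alphaKegeneric2} gives $\supseteq$, and the explicit formula for $\ad^\ast(\zeta_i^{j,s})\alpha$ with $i=j$ vanishing gives $\subseteq$), plus a matching of off-diagonal dimensions between $\h_e$ and $\pp_e$. Your route is longer and needs two extra dimension counts, but it sidesteps an explicit appeal to Yakimova's stabiliser theorem; for this corollary either path is fine.
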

\begin{proof}
By part 3 of Theorem~\ref{genstabal} we know that $\ind(\h_e, \pp_e) = \dim(\h_e)_\alpha = \dim(\mathfrak{h}\cap \h_e) = \dim \spn\{\zeta_i^{i,s}: 1\leq i \leq n, 0 \leq s < \lambda_i\}$. Now $\zeta_i^{i,s} = 0$ only if $\xi_i^{i,\lambda_i-1-s} = -\varepsilon_{i,i,s} \xi_{i'}^{i',\lambda_i-1-s}$, which is only if $i=i'$ and $\varepsilon_{i,i,s} = 1$. In this case $\lambda_i$ is odd, by Lemma~\ref{nilpotents}, and $\varepsilon_{i,i,s} = (-1)^{\lambda_i-s} = 1$ implies $s$ is odd. Hence for each $i=i'$ we obtain $\frac{\lambda_i-1}{2}$ nonzero maps $\zeta_i^{i,s}$. In case $i\neq i'$ we have the relations $\zeta_i^{i,s} = \varepsilon_{i,i,s} \zeta_{i'}^{i',s}$ by part 3 of Lemma~\ref{spanningdetails}. Since these are the only relations we have $\dim \spn \{\zeta_i^{i,s}, \zeta_{i'}^{i',s} : 0 \leq s < \lambda_i\} = \lambda_i$ for each pair $(i,i')$ with $i\neq i'$. Since $\lambda_i$ is even in this case we conclude that $$\ind(\h_e, \pp_e) = \sum_{i} \lfloor\frac{\lambda_i}{2}\rfloor = \frac{1}{2}(N - |\{i : \lambda_i \text{ odd}\}|).$$
\end{proof}

\section{Mil'ner's Map: Symmetrisation in Positive Characteristic}\label{milnersec}
Theorem~\ref{main2} regards the structure of the invariant subalgebras of the enveloping algebra for centraliser in a Lie algebra of type $A$ or $C$. Let $\epsilon = -1$ and let $x \in \g$ (resp. $x \in \h$). The properties of $U(\g_x)^{\g_x}$ (resp. of $U(\h_x)^{\h_x}$) are deduced from those of the corresponding symmetric invariant algebras by the use of a filtration preserving isomorphism of $G_x$-modules (resp. $K_x$-modules). Such an isomorphism is known to exist over any field of characteristic zero, and is named the symmetrisation map \cite[$\S 2.4.6$]{Dix}, however over fields of positive characteristic the construction of the symmetrisation map fails and we use an approach involving representation theory. Let $\tG$ be an arbitrary algebraic group over $\K$ with $\tg = \Lie(\tG)$. Under certain mild assumptions on $\tG$ we are able to define a $\tG$-module isomorphism via an explicit composition
$$U(\tg) \ra S(U(\tg)) \ra S(\tg).$$

There is a grading $S(\tg) = \oplus_{k\geq 0} S(\tg)_k$ where $S(\tg)_k$ consists of all homogeneous polynomials of degree $k$. Since $\tg$ and $\tG$ preserve this grading the invariant subalgebras are homogeneously generated and contain the homogeneous parts of their elements. Also note that $S(\tg)$ is a filtered $\tG$-module. The Poincar\'{e}-Birkhoff-Witt (PBW) theorem  implies that there exists a canonical filtration $\{U(\tg)_{(k)}\}_{k\geq 0}$ where $U(\tg)_{(0)} =\K$ and $U(\tg)_{(k)}$ is generated by expressions $v_{i_1} v_{i_2}\cdots v_{i_k}$ with each $v_{i_j}$ in $\tg$ for $k>0$. The graded algebra associated to $U(\tg)$ shall be denoted $\gr U(\tg)$ and is canonically isomorphic to $S(\tg)$ (we shall usually identify them). In general, if $M$ and $M'$ are filtered $\tG$-modules and $\beta : M \ra M'$ is a filtered $\tG$-map then there is an associated graded $\tG$-map $\gr M \ra \gr M'$ which we denote by $\gr \beta$.

We introduce an auxiliary group. Suppose that $W$ is a finite dimensional vector space and $\tG \subseteq \tR \subseteq GL(W)$. The inclusion $\tg \subseteq \tr$ induces inclusions $S(\tg) \subseteq S(\tr)$ and $U(\tg) \subseteq U(\tr)$. Let $v_1,...,v_n$ be an ordered basis for $\tr$ and $v_{j_1}v_{j_2} \cdots v_{j_m}$ ($j_1\leq j_2\leq \cdots \leq j_m$) a basis element for $U(\tr)$. If $I \subseteq \{1,...,m\}$ then $v_I := \prod_{k \in I} v_{j_k}$ is the monomial in $U(\tr)$, with terms ordered as above. Our map $\mu: U(\tr) \ra S(U(\tr))$ acts on this basis element by the rule
\begin{eqnarray*}
\mu(v_{j_1}v_{j_2} ... v_{j_m}) = \sum_{I_1\sqcup \cdots\sqcup I_k = \{1,...,m\}} v_{I_1}\circ v_{I_2}\circ \cdots \circ v_{I_k}
\end{eqnarray*}
Here $\circ$ denotes symmetric multiplication in $S(U(\tr))$, and we do not distinguish between two partitions $I_1\sqcup \cdots\sqcup I_k$ and $I'_1\sqcup \cdots\sqcup I'_k$ of $\{1,...,m\}$ if there exists $\tau$ in $\mathfrak{S}_k$, the symmetric group on $k$ letters, such that $I_j = I'_{\sigma(j)}$ for $1\leq j\leq k$.  The map is extended by linearity to all of $U(\tr)$ and enjoys several properties, most notably: $\mu$ is an injective map of $\tR$-modules fulfilling
\begin{eqnarray*}
\mu(v_{j_1}v_{j_2} ... v_{j_m}) = \mu(v_{j_1})\mu(v_{j_m})...\mu(v_{j_m})\mod S(U(\tr))_{(m-1)}.
\end{eqnarray*}
This map was constructed by Mil'ner in \cite{Mil}. In that paper he attempted to settle the first Kac-Weisfeiler conjecture in the affirmative, although the argument was eventually found to contain a gap which could not be closed. Nonetheless, much good theory has come out of that attempt and the results of \cite{FP} were accrued in the process of trying to make sense of that article. The main point of reference for our purposes is \cite[$\S 3$]{Pre} where the map is considered especially in the case of centralisers.

The next ingredient is a map $S(U(\tr)) \ra S(\tr)$. This shall be induced from a map $U(\tr) \ra \tr$ as follows. We say that $\tr$ \emph{possesses Richardson's property} if there exists an $\Ad(\tR)$-invariant decomposition $\gl(W) = \tr \oplus \mathfrak{c}$. The inclusion $\tr \subseteq \gl(W)$ implies the existence of an injective map $U(\tr) \ra U(\gl(W))$. By the universal property of enveloping algebras, there is a surjection $U(\gl(W))\ra \gl(W)$ induced by the module $W$. The composition $U(\tr) \ra U(\gl(W)) \ra \gl(W)$ acts as the identity on $\tr \subseteq U(\tr)$, and so the image of this map contains $\tr$. Composing with the projection onto the first factor in the decomposition $\gl(W) = \tr \oplus U$ we obtain an $\Ad(\tR)$-equivariant map $$\pi : U(\tr) \ra \tr.$$ The symmetric algebra construction is a covariant functor from $\K$-vector spaces to commutative $\K$-algebras, and so we obtain a map $$S(\pi) : S(U(\tr)) \ra S(\tr).$$
Continuing to assume that $\tr$ possesses Richardson's property, define $\beta : U(\tr) \ra S(\tr)$ as the composition $\beta = S(\pi) \circ \mu$. We say that the subalgebra $\tg$ is \emph{saturated} provided the image of $\pi|_{U(\tg)}$ is contained in $\tg$ (note that the reverse inclusion holds automatically). Now the following theorem is contained in Property (B1), Proposition~3.4 and Lemma~3.5 of \cite{Pre}.
\begin{thm}\label{milmap}
If $\tG \subseteq \tR \subseteq GL(W)$, $\tr$ possesses Richardson's property and $\tg$ is a saturated subalgebra, then $\beta : U(\tg) \ra S(\tg)$ is an isomorphism of $\tG$-modules such that the associated graded map $$\emph\gr (\beta) :  \emph\gr U(\tg) \cong S(\tg) \ra \emph\gr S(\tg) = S(\tg)$$ is the identity. Furthermore, if $\tg$ is reductive and linear then it possesses Richardson's property, and if $x\in \tg$ then $\tg_x$ is a saturated subalgebra of $\tg$. 
\end{thm}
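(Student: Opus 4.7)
The plan is to establish the three assertions in sequence, with the bulk of the work lying in the first. For the claim that $\beta : U(\tg) \ra S(\tg)$ is a filtration-preserving $\tG$-isomorphism whose associated graded is the identity, I would begin by assembling the equivariance: Mil'ner's map $\mu$ is an $\tR$-module homomorphism by construction, and the $\Ad(\tR)$-invariance of the Richardson decomposition $\gl(W) = \tr \oplus \mathfrak{c}$ makes the projection $\pi$ an $\Ad(\tR)$-equivariant retraction $U(\tr) \ra \tr$ that fixes $\tr$ pointwise. Hence $\beta = S(\pi) \circ \mu$ is automatically $\tG$-equivariant, and the saturatedness hypothesis on $\tg$ guarantees that $\beta$ restricts to a well-defined map $U(\tg) \ra S(\tg)$. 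Feeding a PBW monomial $v_{j_1}\cdots v_{j_m} \in U(\tg)$ into the congruence $\mu(v_{j_1}\cdots v_{j_m}) \equiv v_{j_1} \circ \cdots \circ v_{j_m} \pmod{S(U(\tr))_{(m-1)}}$ recalled from \cite{Mil}, together with $\pi|_{\tg} = \mathrm{id}$, shows that $\beta$ preserves the canonical filtration and that $\gr (\beta)$ carries each $v_{j_1} \circ \cdots \circ v_{j_m}$ to itself. Thus $\gr (\beta) = \mathrm{id}_{S(\tg)}$, and a routine induction on filtered degree upgrades this to bijectivity of $\beta$.

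For the second assertion, taking $\tR = \tG$ reductive and linear, I would construct the required $\Ad(\tG)$-invariant complement of $\tg$ in $\gl(W)$ using a suitable $\tG$-invariant non-degenerate symmetric bilinear form on $\gl(W)$; in the good-characteristic regime of the paper a slight modification of the trace form restricts non-degenerately to $\tg$, and then $\mathfrak{c} := \tg^\perp$ furnishes the sought-for $\Ad(\tG)$-stable splitting $\gl(W) = \tg \oplus \mathfrak{c}$. This is the step most sensitive to the characteristic, and the principal obstacle I anticipate is verifying non-degeneracy on $\tg$; however this is essentially Richardson's original observation and reduces to a case-by-case check once $\tg$ is split into a sum of its centre and its simple ideals.

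For the saturatedness of $\tg_x$, I would exploit the fact that the universal map $\epsilon : U(\gl(W)) \ra \gl(W)$ attached to the defining representation is actually an associative algebra homomorphism into $\End(W)$. Given $u \in U(\tg_x)$ the identity $xu = ux$ in $U(\tg) \subseteq U(\gl(W))$ descends to $x\epsilon(u) = \epsilon(u)x$ in $\End(W)$, so $\epsilon(u)$ commutes with $x$ in $\gl(W)$. Because the Richardson decomposition is $\Ad(\tG)$-invariant, differentiation renders it $\ad(\tg)$-invariant, and in particular $\ad(x)$-invariant; the projection $\gl(W) \ra \tg$ along $\mathfrak{c}$ therefore intertwines with $\ad(x)$ and carries $\gl(W)^{\ad(x)}$ into $\tg^{\ad(x)} = \tg_x$. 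Consequently $\pi(u) \in \tg_x$, which is exactly the saturatedness condition, and I expect no substantive difficulty here beyond the bookkeeping needed to commute $\pi$ with $\ad(x)$.
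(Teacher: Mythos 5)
The paper does not itself prove this statement: it attributes the result to \cite[Property (B1), Proposition~3.4, Lemma~3.5]{Pre}, and the only remark made beyond that citation is that strengthening $\ad(\tr)$-invariance of the Richardson splitting to $\Ad(\tR)$-invariance upgrades $\beta$ from a $\tg$-module to a $\tG$-module isomorphism without changing Premet's argument. Your proposal therefore supplies a proof where the paper supplies a reference, and it is a faithful reconstruction of the underlying mechanism: the $\tG$-equivariance of $\beta = S(\pi)\circ\mu$, the PBW-congruence for $\mu$ yielding $\gr(\beta)=\mathrm{id}$ and hence bijectivity by filtered-degree induction, the Richardson complement taken as $\tg^{\perp}$ with respect to the trace form of $\gl(W)$, and the deduction of saturatedness from the algebra-homomorphism property of $\epsilon:U(\gl(W))\ra\End(W)$ together with the $\ad(x)$-stability of the splitting are all exactly right. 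One wording slip worth flagging: ``a slight modification of the trace form'' is misleading. For the Lie algebras this theorem is actually applied to in the paper ($\gl(V)$ and $\mathfrak{sp}(V)$ in odd characteristic) the unmodified trace form of $\gl(V)$ is already non-degenerate on $\tg$; and in the genuinely problematic cases (e.g.\ $\sl_n$ with $p\mid n$, where the scalar matrices lie inside $\sl_n$) no $\Ad(\tG)$-stable complement exists at all, so the cure is the good-characteristic hypothesis rather than a rescaled bilinear form.
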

\begin{rem}
\rm{By insisting in Richardson's property that the decomposition is $\emph{\Ad}(\tR)$-invariant we obtain a $\tG$-module isomorphism in the above theorem. In Premet's proof the decomposition is only assumed to be $\ad(\tr)$-invariant, implying that $\beta$ is a map of $\tg$-modules. Inspecting the details of \cite[$\S 3$]{Pre} we can see that this this slight alteration will place no extra burden upon the proof.}
\end{rem}

\section{Proof of the Main Theorems}\label{Proofs}
\subsection{Symmetric Invariants}

The deductions of Sections~\ref{elemInv} through to \ref{GenEl} aim to satisfy the assumptions of a theorem of Serge Skryabin. We shall record here the first part of that theorem, which is sufficient for our purposes.
\begin{thm}\label{skryabin}
\cite[Theorem~5.4(i)]{Skr}  \\
Suppose that $X$ is a smooth affine variety and $\tg$ is a $p$-Lie algebra acting upon $\K[X]$ by derivations. Let $f_1, ..., f_m \in \K[X]^{\tg}$ where $$m = m(\tg, X) := \dim(X) - \dim(\tg) + \emph{\ind}(\tg, X).$$ Denote by $J$ the closed set of those $x \in X$ such that the differentials $d_x f_1,..., d_x f_m$ are linearly dependent. If $\text{\emph{codim}}_X  J \geq 2$ then $$\K[X]^{\tg} = \K[X]^{p}[f_1,...,f_m]$$ and $\K[X]$ is free of rank $p^m$ over $\K[X]^{p}$.
\end{thm}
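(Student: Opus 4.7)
The plan is to establish the field-theoretic identity first and then promote it to the ring level using the codimension hypothesis. Since derivations annihilate $p^{\text{th}}$ powers we automatically have $\K[X]^p \subseteq \K[X]^{\tg}$, and the $\tg$-invariance of the $f_i$ gives the obvious inclusion $A := \K[X]^p[f_1,\ldots,f_m] \subseteq \K[X]^{\tg}$. Passing to fraction fields, write $F = \K(X)^p$ and $L = \K(X)^{\tg}$; because $L^p \subseteq F$, the extension $L/F$ is purely inseparable of exponent at most one and therefore of $p$-power degree $p^d$ with $d = \dim_L \Omega_{L/F}$.

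The first key step is to identify $d$ with $m$. Since $X$ is smooth, $\dim_{\K(X)} \Omega_{\K(X)/F} = \dim X$. A local computation at a point $x \in X$ where $\dim \tg_x = \ind(\tg, X)$ shows that the orbital tangent directions coming from $\tg$ span a cotangent subspace of dimension $\dim \tg - \ind(\tg, X)$, so $\dim_{\K(X)} \Omega_{\K(X)/L} = \dim \tg - \ind(\tg, X)$. The standard exact sequence of differentials for the tower $F \subseteq L \subseteq \K(X)$ then yields $d = m$. To promote this dimension count to the statement $L = F(f_1,\ldots,f_m)$ one checks that the classes of $df_1,\ldots,df_m$ are linearly independent in $\Omega_{L/F}$; this may be tested at a generic point of $X$, and the hypothesis that $J$ has codimension at least two guarantees such a point exists where the $df_i$ remain independent. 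Combined with the purely inseparable structure this gives $L = F[f_1,\ldots,f_m]$.

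The crucial step is to lift this equality from fraction fields to the rings. Since $X$ is smooth, Frobenius makes $\K[X]$ a locally free $\K[X]^p$-module of rank $p^{\dim X}$, so $\K[X]^{\tg}$ is finite over $\K[X]^p$ and inherits reflexivity as a $\K[X]^p$-module. On the open complement $X \setminus J$ the differentials of the $f_i$ supply a local $p$-basis for $\K[X]^{\tg}$ over $\K[X]^p$, so $A$ and $\K[X]^{\tg}$ already agree there. The main obstacle is the purity argument needed to extend this agreement across the codimension two locus $J$: one must show that $\K[X]^{\tg}$ has no extra sections supported along $J$, which should follow from its reflexivity over $\K[X]^p$ together with the transparent polynomial structure of $A$ over $\K[X]^p$ (an $S_2$-type argument, exploiting that a reflexive module is determined by its values in codimension one). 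Once $A = \K[X]^{\tg}$ is in hand, the freeness of $\K[X]^{\tg}$ of rank $p^m$ over $\K[X]^p$ follows from the degree count $[L:F] = p^m$ together with the structure of $A$ as a free $\K[X]^p$-module on the monomials in the $f_i$ of bounded $p$-adic degree.
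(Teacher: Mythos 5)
This statement is a direct citation of Skryabin's Theorem~5.4(i) and the paper itself supplies no proof, so there is no internal argument to compare against; I can only assess your outline on its own merits. The high-level plan you describe --- reduce to Jacobson's theory of purely inseparable extensions of exponent one, count degrees via K\"ahler differentials to obtain $[\K(X)^{\tg} : \K(X)^p] = p^m$, and then lift the generic equality $A := \K[X]^p[f_1,\ldots,f_m] = \K[X]^{\tg}$ to the ring level by a purity argument in codimension two --- is the right strategy, and the individual identifications you make (e.g.\ $\Omega_{\K(X)/\K(X)^p} \cong \Omega_{\K(X)/\K}$ since $\K$ is perfect, multiplicativity of degrees through the tower, reflexivity of $\K[X]^{\tg}$ as a $\K[X]^p$-module because it is the kernel of a map of finite free modules over the regular ring $\K[X]^p$) are all sound.

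That said, two steps are asserted rather than argued and they carry essentially all of the weight. First, the claim that ``on $X\setminus J$ the differentials of the $f_i$ supply a local $p$-basis for $\K[X]^{\tg}$ over $\K[X]^p$'' is precisely the local form of the theorem; to prove it one must relate linear independence of the $d_xf_i$ in $T_x^*X$ to the structure of the extension $(\K[X]^{\tg})_x / (\K[X]^p)_x$, which requires the full machinery of $p$-bases and the Jacobson--Bourbaki correspondence for restricted Lie algebras of derivations, together with a careful use of the normalisation $m = \dim X - \dim\tg + \ind(\tg,X)$. You cannot simply read this off from the pointwise Jacobian condition. Second, your $S_2$ step is circular as written: you appeal to the ``transparent polynomial structure of $A$'' to extend $A = \K[X]^{\tg}$ across $J$, but the statement that $A$ is free (hence reflexive) over $\K[X]^p$ on the monomials $f_1^{a_1}\cdots f_m^{a_m}$ with $0\le a_i<p$ is only established in your final paragraph, after the equality. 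The correct order is: first deduce, purely from $[\K(X)^{\tg}:\K(X)^p]=p^m$, that those $p^m$ monomials are $\K[X]^p$-linearly independent, so $A$ is free of rank $p^m$ and in particular reflexive; then compare the two reflexive finite $\K[X]^p$-modules $A\subseteq\K[X]^{\tg}$, which agree at every height-one prime since $J$ has codimension $\ge 2$, and conclude they coincide. With that reordering and a genuine proof of the local $p$-basis step, the argument closes; as it stands, both key lemmas are left open.
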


The algebra $\K[X]^{p} = \{ f^{p} : f \in \K[X]\}$ is called the $p^{\text{th}}$ power subalgebra of $\K[X]$. Before we can apply this theorem we shall need a lemma.
\begin{lem}\label{noinv} The following are true:
\begin{enumerate} 
\item{$m(\g_e, \g_e^\ast) = N$;}
\smallskip
\item{$m(\h_e, \h_e^\ast) = |\{x_r : r \text{ even}\}| = N/2$ when $\epsilon = -1$;}
\smallskip
\item{$m(\h_e, \pp_e^\ast) = |\{x_r : r + d_r \text{ even}\}| = \frac{1}{2}(N + |\{i : \lambda_i \text{ odd}\}|)$ when $\epsilon = 1$.}
\end{enumerate}
Here $m$ is defined in the statement of Skryabin's theorem.
\end{lem}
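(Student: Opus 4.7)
The plan is to unwind $m(\tg,X) = \dim X - \dim\tg + \ind(\tg,X)$ in each case, identify the index via known results, and then match against the combinatorial count of relevant invariants.

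For parts (1) and (2) the spaces $X = \g_e^\ast$ and $X = \h_e^\ast$ have the same dimension as their acting algebras, so $m$ collapses to the index. Both $\g$ and, when $\epsilon = -1$, $\h$ are reductive, and I would invoke the Elashvili conjecture (now a theorem, via Yakimova \cite{Yak1} in the classical cases) to obtain $\ind(\g_e) = \ind(\g) = \rank(\g) = N$ and $\ind(\h_e) = \ind(\h) = N/2$, the latter because $N$ is even in the symplectic setting and $\mathfrak{sp}_N$ has rank $N/2$. Counting invariants is trivial: for part (1) we have $x_1,\dots,x_N$; for part (2) the set $\{r \in \{1,\dots,N\} : r \text{ even}\}$ has cardinality $N/2$ since $N$ is even.

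For part (3) I would combine Corollary~\ref{orthindex} with a direct calculation of $\dim\pp_e - \dim\h_e$. Decompose $\g_e$ into $\sigma$-eigenspaces using the spanning vectors $\zeta_i^{j,s}$ and $\eta_i^{j,s}$ of Section~\ref{restriction}. The pair $(i,j)$ of indices is either \emph{self-paired} (meaning $j = i'$, so that $\sigma$ fixes the line through $\xi_i^{j,\lambda_j-1-s}$ up to the scalar $\varepsilon_{i,j,s}$) or not; in the non-self-paired case $\sigma$ splits a two-dimensional invariant subspace evenly between $\h_e$ and $\pp_e$, contributing nothing to the difference. For the self-paired pairs I would evaluate $\sum_s \varepsilon_{i,i',s}$ using the formula of Lemma~\ref{spanningdetails}(1) and the orthogonal dichotomy from Lemma~\ref{nilpotents} ($i=i'$ iff $\lambda_i$ is odd when $\epsilon = 1$). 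A short check shows the sum vanishes when $\lambda_i$ is even ($i \neq i'$, $\lambda_i$ consecutive values of $(-1)^s$ cancel) and yields $-1$ for each $i$ with $\lambda_i$ odd, giving $\dim\h_e - \dim\pp_e = -|\{i : \lambda_i \text{ odd}\}|$. Adding Corollary~\ref{orthindex} then produces
\[
m(\h_e,\pp_e^\ast) \;=\; |\{i : \lambda_i \text{ odd}\}| + \tfrac{1}{2}\bigl(N - |\{i : \lambda_i \text{ odd}\}|\bigr) \;=\; \tfrac{1}{2}\bigl(N + |\{i : \lambda_i \text{ odd}\}|\bigr).
\]

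The hardest step is the parallel identity $|\{r : r + d_r \text{ even}\}| = \tfrac{1}{2}(N + |\{i : \lambda_i \text{ odd}\}|)$. Writing the count as $\tfrac{N}{2} + \tfrac{1}{2}\sum_{r=1}^N (-1)^{r+d_r}$ and partitioning the range of $r$ into the blocks of constant degree given by formula~(\ref{degreeinst}), the inner sum collapses to $\sum_{k : \lambda_k \text{ odd}} (-1)^{k + s_k + 1}$, where $s_k = \lambda_1 + \cdots + \lambda_{k-1}$. Setting $e_k := |\{i < k : \lambda_i \text{ even}\}|$ one sees $k + s_k \equiv e_k + 1 \pmod 2$, so the claim reduces to showing $e_k$ is even whenever $\lambda_k$ is odd. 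This in turn follows from the standing convention $i' \in \{i-1,i,i+1\}$ combined with Lemma~\ref{nilpotents}: even parts in the orthogonal case are self-conjugate-free and must therefore pair with an adjacent partner of equal size, forcing the even parts to occur in consecutive pairs, so that before any odd index $k$ the count $e_k$ of even indices is necessarily even. Each term of the signed sum then equals $+1$, and the two expressions agree.
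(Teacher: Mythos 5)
Your proof is correct and reaches the same conclusions, but takes a genuinely different route on part (3) in two places. Where the paper simply quotes Jantzen's dimension formula $\dim\h_e = \tfrac{1}{2}(\dim\g_e - |\{i : \lambda_i \text{ odd}\}|)$ to obtain $\dim\pp_e^\ast - \dim\h_e$, you recompute the difference from scratch as the trace of $\sigma$ on $\g_e$, summing the eigenvalues $\varepsilon_{i,i',s}$ over the self-paired basis lines. This is self-contained and makes explicit which basis vectors contribute, at the modest cost of redoing a known calculation. For the combinatorial identity $|\{r : r+d_r \text{ even}\}| = \tfrac{1}{2}(N + |\{i : \lambda_i \text{ odd}\}|)$, the paper partitions $\{1,\dots,N\}$ into the blocks $\mathcal{D}_i$ of constant $d_r = i$ and runs casework on $i = i'$ vs.\ $i \neq i'$ and on the parity of $i$; you instead encode the indicator as $\tfrac{1}{2}(1 + (-1)^{r+d_r})$, collapse the signed sum block-by-block, and reduce to the single parity statement that $e_k := |\{i<k : \lambda_i \text{ even}\}|$ is even whenever $\lambda_k$ is odd. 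Both arguments ultimately rest on the same structural fact coming from Lemma~\ref{nilpotents} and the convention $i' \in \{i-1,i,i+1\}$ --- that in the orthogonal case the even-length blocks form adjacent pairs which cannot straddle a fixed point $k=k'$ --- so the parity of $e_k$ (equivalently of the number of fixed points before $k$) is forced. Your signed-sum phrasing is a bit slicker since it avoids the four-way casework; the one spot where you are slightly terse is the step ``before any odd index $k$ the count $e_k$ is necessarily even,'' which silently uses that if $i<k$ with $\lambda_i$ even then $i'<k$ too (if $i' = k$ then $k' = i \neq k$, contradicting $k=k'$), but that gap is easily closed and the paper's own version of the argument is equally implicit on this point. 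Parts (1) and (2) match the paper's argument verbatim.
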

\begin{proof}
We have $m(\g_e, \g_e^\ast) = \ind(\g_e, \g_e^\ast) = N$ by \cite{Yak1}. If we take $\epsilon = -1$ so that $N$ is even then $m(\h_e, \h_e^\ast) = \ind(\h_e, \h_e^\ast) = N/2 = |\{ 1\leq r \leq N : r \text{ even}\}|$ by the same reasoning.

Now let $\epsilon = 1$. By \cite[$\S 3.2,~(3)$]{Jan} we have $$\dim(\h_e) = \frac{1}{2}( \dim(\g_e) - |\{ i : \lambda_i \text{ odd}\}|).$$ Since $\dim\pp_e^\ast = \dim \pp_e = \dim\g_e - \dim\h_e$ we have $\dim \pp_e^\ast - \dim \h_e = |\{ i : \lambda_i \text{ odd}\}|.$ By Corollary \ref{orthindex} we get $m(\h_e, \pp_e^\ast) = \frac{1}{2}(N + |\{i : \lambda_i \text{ odd}\}|)$. We must show that this number equals $|\{1\leq r \leq N: r + d_r \text{ even}\}|$. Partition the set $\{1,...,N\}$ into disjoint subsets $\II_i = \{1\leq r \leq N : d_r = i\}$ where $i=1,...,n$. Then $|\II_i| = \lambda_i$ and by the definition of the sequence $d_1,d_2,...,d_N$ we have $r \in \II_i$ if and only if
\begin{eqnarray}\label{IIprop}
0 < r - \sum_{k=1}^{i-1} \lambda_k \leq \lambda_i.
\end{eqnarray}
If $i \neq i'$ then $|\II_i| = \lambda_i$ is even by Lemma \ref{nilpotents}. In this case, regardless of the parity of $\lambda_i$ there are exactly $\lambda_i/2$ values $r$ fulfilling (\ref{IIprop}) with $r+d_r$ even. Now suppose $i = i'$ so that $|\II_i| = \lambda_i$ is odd. We consider two cases: $i$ odd or $i$ even. In the first case there must be an even number of indexes $j$ with $1 \leq j < i$ and $j = j'$, since $j' \in \{j-1, j, j+1\}$. Thus there an even number of indexes $1 \leq j < i$ with $\lambda_j$ odd by Lemma \ref{nilpotents}. We deduce that $\sum_{k=1}^{i-1} \lambda_k$ is even. If $r \in \II_i$ so that $d_r = i$, then $r + d_r$ even implies $r$ is odd. Thus there are exactly $(\lambda_i + 1)/2$ values of $r$ fulfilling (\ref{IIprop}) with $r+d_r$ even. Now suppose $i=i'$ and $i$ is even. Similar to the case $i$ odd, there must be an odd number of indexes $1\leq j < i$ with $\lambda_j$ odd. Thus $\sum_{k=1}^{i-1}\lambda_k$ is odd. For $r \in\II_i$, If $r+d_r$ is even then $r$ is even. Thus there are exactly $(\lambda_i + 1)/2$ values of $r$ fulfilling (\ref{IIprop}) with $r+d_r$ even. With Lemma \ref{nilpotents} in mind we are able to conclude that $$|\{1\leq r \leq N : r + d_r \text{ even}\}| = \sum_{i\neq i'} \lambda_i/2 + \sum_{i=i'} (\lambda_i+1)/2 = \frac{1}{2}(N + |\{i : \lambda_i \text{ odd}\}|).$$
\end{proof}

The proofs are identical for both parts of Theorem~\ref{main1} and also for Theorem~\ref{main3}, and so we may unify notation. In the remnant of this subsection, $Q$ shall be our underlying group, $e \in \tg = \Lie(\tG)$ shall be a choice of nilpotent element, $\epsilon = \pm 1$ will indicate whether $K$ is orthogonal or symplectic (this, of course, is redundant when $Q$ is of type $A$), $X$ shall be the $Q_e$-module of interest, $\mathcal{I}$ shall be a finite subset of $\K[X]^{\tG_e}$ and $m$ shall be the integer defined in the statement of Skryabin's theorem. The following table explains how our notations are unified:
\begin{center}
  \begin{tabular}{ | c | c | c | c | c | c|}
    \hline
    & $Q$ & $\epsilon$ & $X$ & $\mathcal{I}$ & $m$\\ \hline\hline
    \text{Case 1} & $G$ & $\pm 1$ & $\g_e^\ast$ & $\{x_1,...,x_N\}$ & $m(\g_e, \g_e)$  \\ \hline
    \text{Case 2} & $K$ & $-1$ &$ \h_e^\ast$ & $\{x_r|_{\h_e^\ast} : r \text{ even}\}$& $m(\h_e, \h_e)$ \\ \hline
    \text{Case 3} & $K$ & $1$ & $\pp_e^\ast$ & $\{x_r|_{\pp_e^\ast} : r + d_r \text{ even}\}$ & $m(\h_e, \pp_e)$\\
    \hline
  \end{tabular}
\end{center}
Theorems~\ref{main1} and \ref{main3} are both convenient ways of stating our first results without discussing the explicit constructions of the invariant algebras. Our method tells us more than is expressed by those theorems, especially in the nilpotent case. Using the above notations we may precisely state our result as follows.
\begin{thm}\label{main13}
Suppose that we are in Case 1, 2 or 3. If $e\in \tg$ then $|\mathcal{I}| = m$. Furthermore $\K[X]^{Q_e} = \K[\mathcal{I}]$ is a polynomial algebra generated by $\mathcal{I}$, and $$\K[X]^{\tg_e} = \K[X]^p[\mathcal{I}]$$ is a free $\K[X]^p$-module of rank $p^m$.
\end{thm}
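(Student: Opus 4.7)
The plan is to apply Skryabin's theorem (Theorem~\ref{skryabin}) to the $p$-Lie algebra $\tg_e$ acting on $X$, obtain algebraic independence of $\mathcal{I}$ from the Jacobian codimension bound, and then extract the description of the group invariants $\K[X]^{Q_e}$ by an induction on degree. Almost every ingredient has been assembled in the previous sections; the work is to combine them correctly.

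First I would verify the hypotheses of Skryabin's theorem in each of the three cases. The variety $X$ is an affine space, hence smooth; $\tg_e$ is a restricted Lie algebra, being a centraliser in $\g$, and it acts on $\K[X]$ by derivations through the (co)adjoint action of $\g_e$ on $S(\g_e)$, of $\h_e$ on $S(\h_e)$, or of $\h_e$ on $S(\pp_e)$ respectively. The elements of $\mathcal{I}$ lie in $\K[X]^{Q_e}\subseteq \K[X]^{\tg_e}$ by Corollary~\ref{invariants} together with the vanishing statement of Corollary~\ref{vanishing1} (which identifies exactly which restrictions to $\h_e^\ast$ or $\pp_e^\ast$ are actually non-zero). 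The Jacobian locus of the elements of $\mathcal{I}$ has codimension at least two in $X$ by Proposition~\ref{Codimension 2} in Case~1 and by Proposition~\ref{jaclocsymorth} in Cases~2 and 3. Finally, combining Lemma~\ref{noinv} with the distinctness and non-vanishing assertion of Corollary~\ref{nonzero1}, the set $\mathcal{I}$ consists of exactly $m$ distinct non-zero polynomials, so $|\mathcal{I}| = m$. Skryabin's theorem now yields $\K[X]^{\tg_e} = \K[X]^p[\mathcal{I}]$ together with the freeness of rank $p^m$ over $\K[X]^p$.

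For algebraic independence I would invoke the Jacobian condition directly: since the Jacobian locus is a proper closed subset of the affine space $X$, there exists a point at which the differentials $d_\gamma f_1,\ldots,d_\gamma f_m$ are linearly independent, which forces algebraic independence of $\mathcal{I}$ by the standard argument applied to the partial derivatives of any putative polynomial relation. Therefore $\K[\mathcal{I}]$ is a polynomial ring on $m$ generators.

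It remains to prove $\K[X]^{Q_e} = \K[\mathcal{I}]$, and this is the step requiring the most care. I would induct on the $S(\tg)$-degree of a homogeneous $f \in \K[X]^{Q_e}$, the base case $\deg f = 0$ being trivial. Using Skryabin we write
\[ f = \sum_{0 \leq a_i < p} g_a \, \mathcal{I}^a, \qquad g_a \in \K[X]^p.\]
Applying a general $q \in Q_e$ and using that each $\mathcal{I}^a$ is $Q_e$-invariant while the family $\{\mathcal{I}^a\}$ forms a free $\K[X]^p$-basis of $\K[X]^{\tg_e}$, uniqueness of the expansion forces $q \cdot g_a = g_a$, so $g_a \in \K[X]^p \cap \K[X]^{Q_e}$. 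Writing $g_a = h_a^p$ with $h_a \in \K[X]$ and using injectivity of the Frobenius, the $Q_e$-invariance of $g_a$ descends to $h_a$. Since $\deg h_a < \deg f$ whenever $g_a$ is non-zero and $\deg f > 0$, the inductive hypothesis places $h_a$ in $\K[\mathcal{I}]$, whence $g_a \in \K[\mathcal{I}]$ and $f \in \K[\mathcal{I}]$.

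The main obstacle is the last paragraph: the induction only closes because the Skryabin basis $\{\mathcal{I}^a\}$ of $\K[X]^{\tg_e}$ over $\K[X]^p$ consists entirely of $Q_e$-invariants, so that the coefficients $g_a$ must themselves be $Q_e$-invariant. This compatibility between the Skryabin decomposition and the algebraic group action is the crux — without it, the reduction via Frobenius to a polynomial of strictly smaller degree would not remain inside $\K[X]^{Q_e}$, and the step from the $\tg_e$-invariants to the $Q_e$-invariants could not be made.
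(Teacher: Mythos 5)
Your proof is correct and follows essentially the same route as the paper: verify the hypotheses of Skryabin's theorem using Corollary~\ref{invariants}, Corollary~\ref{nonzero1}, Lemma~\ref{noinv} and the Jacobian codimension bounds (Propositions~\ref{Codimension 2} and \ref{jaclocsymorth}), then pass from the $\tg_e$-invariants to the $Q_e$-invariants by an induction on degree, using that the free $\K[X]^p$-basis of monomials in $\mathcal{I}$ consists of $Q_e$-invariants to force $Q_e$-invariance of the coefficients. Your version of the inductive step, which explicitly extracts a $p$-th root $h_a$ of the coefficient $g_a$ and notes $\deg h_a < \deg f$, is actually a slight tidying of the paper's argument (the paper takes $\deg f < p$ as the base case, whereas with your Frobenius phrasing the base case $\deg f = 0$ suffices).
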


\begin{proof}
That $|\mathcal{I}| = m$ follows immediately from Lemma~\ref{noinv} and Corollary~\ref{nonzero1}.

We now prove $\K[X]^{\tg_e} = \K[X]^p[\mathcal{I}]$ is a free $\K[X]^p$-module of rank $p^m$ by applying Skryabin's theorem (Theorem~\ref{skryabin}). Corollary~\ref{invariants} tells us that the polynomials $\mathcal{I}$ are invariant. Lemma~\ref{noinv} ensures that $\mathcal{I}$ is of the correct size. Propositions \ref{Codimension 2} and \ref{jaclocsymorth} confirm that the condition on codimension of the Jacobian locus is satisfied, and so the the assumptions of Skryabin's theorem are satisfied. The claim follows.

In order to prove that $\K[X]^{Q_e} = \K[\mathcal{I}]$ we use induction on polynomial degree. Let $f \in \K[X]^{\tG_e}$. If $\deg(f) < p$ then by the previous paragraph, $f \in \K[\mathcal{I}]$. Suppose $\deg(f) \geq p$ and that all $g \in \K[X]^{\tG_e}$ with $\deg(g) < \deg(f)$ actually lie in $\K[\mathcal{I}]$. We shall call a product of elements of $\X$ a monomial in $\X$, and this terminology shall cause no confusion. Since $f \in \K[X]^{\tG_e} \subseteq \K[X]^{\tg_e}$ we may write $f$ as a sum of monomials in $\X$ with coefficients in $\K[X]^p$. Since $f$ and all monomials in $\mathcal{I}$ are fixed by $\tG_e$ we deduce that each coefficient is also fixed by $\tG_e$. Since $(\K[X]^p)^{\tG_e} = (\K[X]^{\tG_e})^p$ we conclude by induction that each coefficient is a $p^\text{th}$ power of an element of $\K[\mathcal{I}]$. It follows that $f \in \K[\mathcal{I}]$.
\end{proof}
\begin{rem}
\rm{Case 3 of Theorem~\ref{main13} quickly implies Theorem~\ref{main3}. Cases 1 and 2 imply the nilpotent case of Theorem~\ref{main1}. As was noted in the discussion following the statement of that theorem, there is a simple reduction to the nilpotent case. Therefore Theorems~\ref{main1} and \ref{main3} follow immediately from Theorem~\ref{main13}.}
\end{rem}

\subsection{The Centre of the Enveloping Algebra}
In this section we shall prove Theorem~\ref{main2}, using the theory introduced in Section~\ref{milnersec} and a simple filtration argument. Fortunately the proof in type $A$ and $C$ is identical and may be dealt with at once. Unfortunately we will have to reset our notation again. We let $\epsilon = -1$, let $\tG \in \{G, K\}$ and choose $x \in \tg$. Thanks to Theorem~\ref{milmap} there is a filtered $\tG_x$-module isomorphism $$\beta : U(\tg_x) \ra S(\tg_x).$$ According to Theorem~\ref{main1}, $S(\tg_x)^{Q_x}$ is polynomial on $\rank(\tg)$ generators. In keeping with the previous section we denote a set of homogeneous generators by $\mathcal{I}$. Since $\gr (\beta)$ is the identity, the top graded component of $\beta^{-1} f$ is equal to $f$ for all $f \in \mathcal{I}$. We shall denote by $\widetilde{\mathcal{I}}$ the preimage under $\beta$ of $\mathcal{I}$ in $U(\tg_x)$.  Of course $\tcx \subseteq U(\tg_x)^{\tG_x} \subseteq Z(\tg_x)$. We are now ready to present a proof of Theorem~\ref{main2}
\begin{proof}
The graded algebra of the centre $\gr Z(\tg_x)$ is contained in $S(\tg_x)^{\tg_x}$, which is equal to $S(\tg_x)^p[\mathcal{I}]$ by the previous section. Since $\mathcal{I}$ consists of homogeneous polynomials we have $S(\tg_x)^p[\mathcal{I}] = \gr Z_p(\tg_x)[\tcx]$ and furthermore $Z_p(\tg_x)[\tcx]$ is central in $U(\tg_x)$. Placing all of these inclusions together we get
$$\gr Z(\tg_x) \subseteq S(\tg_x)^{\tg_x} = S(\tg_x)^p[\mathcal{I}] \subseteq \gr Z(\tg_x)[\tcx] \subseteq \gr Z(\tg_x).$$
Thence $\gr Z(\tg_x)[\tcx] = \gr Z(\tg_x)$ and the dimensions of the graded components of $\gr Z(\tg_x)[\tcx]$ and $\gr Z(\tg_x)$ coincide. Since $Z(\tg_x)[\tcx] \subseteq Z(\tg_x)$ we deduce that this inclusion is an equality. Part 1 of the theorem follows.

Our proof of part 2 is very similar. We denote by $\K[\tcx]$ the subalgebra of $Z(\tg_x)$ generated by $\tcx$, and similar for $\K[\mathcal{I}]$. We have $$\K[\mathcal{I}] = \gr \K[\tcx] \subseteq \gr U(\tg_x)^{\tG_x} \subseteq S(\tg_x)^{\tG_x} = \K[\mathcal{I}]$$ and so we have equality throughout. In particular the dimensions of the graded components of $\gr U(\tg_x)^{\tG_x}$ and $\gr \K[\tcx]$ coincide. Since $U(\tg_x)^{\tG_x}\subseteq \K[\tcx]$ we must actually have equality $U(\tg_x)^{\tG_x} = \K[\tcx]$ and part 2 follows.

Part 3 is an immediate consequence of parts 1 and 2, and the fact that the $Z_p(\tg_x)$-basis of $Z(\tg_x)$ is also a $Z_p(\tg_x)^{\tG_x}$-basis of $Z(\tg_x)$.
\end{proof}

\subsection{Dimensions of Simple Modules}
We remind the reader that the dimensions of simple modules for modular Lie algebras are bounded. This fact essentially follows from the fact that the enveloping algebra is a finite module over its centre, and sits in stark contrast to the characteristic zero case, where each simple algebra has simple modules of arbitrarily high dimension. For each Lie algebra $\tg$ over $\K$ we denote by $M(\tg)$ the maximal dimension of simple modules. The first Kac-Weisfeiler conjecture asserts that $$M(\tg) = p^{\frac{1}{2}(\dim(\tg) - \ind(\tg))}.$$ This conjecture is extremely general and should be approached with some trepidation. See the introduction for a further discussion.

As was explained in the introduction, Zassenhaus interpreted the number $M(\tg)$ in algebraic terms. Let $F_p(\tg)$ be the full field of fractions of $Z_p(\tg)$ and let $F(\tg)$ be that of $Z(\tg)$. We record the following lemma which condenses some of the content of \cite[Theorems~1 \& 6]{Zas}, and is explained concisely in \cite[A.5]{Jan.}.
\begin{lem}\label{Zassen}
The first Kac-Weisfeiler conjecture is equivalent to the assertion $$[F(\tg) : F_p(\tg)] = p^{\ind(\tg)}.$$
\end{lem}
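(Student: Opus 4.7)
The plan is essentially a single substitution, since the two numerical formulas stated just before the lemma are already on the table. Recall from the introduction that Zassenhaus proved $M(\tg) = p^{\frac{1}{2}(\dim(\tg) - l)}$, where the integer $l$ is defined by $p^l = [F(\tg) : F_p(\tg)]$, while the first Kac--Weisfeiler conjecture asserts $M(\tg) = p^{\frac{1}{2}(\dim(\tg) - \ind(\tg))}$. My proof would simply align these two descriptions of the same integer.

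First I would point out that, since both quantities are positive powers of $p$, the equality
$$p^{\frac{1}{2}(\dim(\tg) - l)} = p^{\frac{1}{2}(\dim(\tg) - \ind(\tg))}$$
holds if and only if the exponents agree, i.e.\ if and only if $l = \ind(\tg)$. Unravelling the defining relation $p^l = [F(\tg) : F_p(\tg)]$, the latter is precisely the assertion $[F(\tg):F_p(\tg)] = p^{\ind(\tg)}$. Conversely, substituting $l = \ind(\tg)$ in Zassenhaus' formula immediately yields the KW1 formula for $M(\tg)$. The two statements are therefore logically interchangeable modulo Zassenhaus' theorem.

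There is no real obstacle here, but the one point worth verifying carefully is that $[F(\tg):F_p(\tg)]$ is indeed a $p$-power, so that the exponent $l$ is well-defined; this is a standard consequence of $Z_p(\tg) \subseteq Z(\tg)$ being a purely inseparable extension of commutative integral domains in characteristic $p$, recorded in \cite[Theorems~1 \& 6]{Zas} and expounded in \cite[A.5]{Jan.}. Once this is in hand, the equivalence is a tautology.
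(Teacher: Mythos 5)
Your main argument is exactly the intended one: the lemma is a tautological reformulation of the formula $M(\tg) = p^{\frac{1}{2}(\dim(\tg)-l)}$ from Zassenhaus, obtained by matching exponents, and the paper itself records the lemma without further proof precisely because this substitution is so immediate. Your treatment of both directions of the equivalence is fine.

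One side-remark is incorrect, though it does not affect the argument. You assert that $[F(\tg):F_p(\tg)]$ is a $p$-power because $Z_p(\tg)\subseteq Z(\tg)$ is a purely inseparable extension. This is false already for the one-dimensional torus $\tg = \K$: there $Z = \K[t]$, $Z_p = \K[t^p-t]$, and $t$ satisfies the \emph{separable} Artin--Schreier polynomial $X^p - X - (t^p-t)$ over $F_p(\tg)$, so the extension $F(\tg)/F_p(\tg)$ is Galois with group $\Z/p\Z$, not purely inseparable. The correct reason the degree is a $p$-power is the one sketched in \cite[A.5]{Jan.}: $U(\tg)\otimes_{Z_p(\tg)} F_p(\tg)$ has $F_p(\tg)$-dimension $p^{\dim\tg}$ and is a matrix algebra of some size $n$ over $F(\tg)$, so $p^{\dim\tg} = n^2[F(\tg):F_p(\tg)]$, forcing $[F(\tg):F_p(\tg)]$ to be a $p$-power. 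Since you defer to the references for this fact anyway, the slip is cosmetic, but the stated justification should be removed or corrected.
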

Using this lemma, and our results on invariant theory, we now prove Theorem~\ref{KW1} which states that the first Kac-Weisfeiler conjecture holds for centralisers in types $A$ and $C$. We actually give two proofs, very different in nature. The first of these is more general and applies to all centralisers in types $A$ and $C$ whilst the second proof only applies only in the case $x$ nilpotent with $\LL$ of type $A$. We now supply the first proof, which is purely algebraic and makes use of Lemma~\ref{Zassen}.
\begin{proof}
Let $\tG \in \{G, K\}$ and $x \in \tg$. Define $F := F(\tg_x)$ and $F_p := F_p(\tg_x)$. By the previous lemma we may prove that $[F : F_p] = p^{\ind(\tg)}$. In the notation of the previous subsection we let $F' = F_p [\tcx] = Z(\LL_x)\otimes_{Z_p(\LL_x)} F_p$. Clearly $F' \subseteq F$. We claim that this inclusion is actually an equality. It will suffice to prove that every element of $F$ can be written in the form $f/g$ where $f \in Z(\LL_x)$ and $g \in Z_p(\LL_x)$. This is actually a very general result which holds whenever we have a module-finite inclusion of $\K$-algebras. We shall recall the proof for the reader's convenience.

Suppose $f/g \in F$. Since $Z(\tg_x)$ is a finite $Z_p(\tg_x)$-module we have $a_n g^n  + a_{n-1} g^{n-1} + \cdots + a_0 = 0$ for some $n \in \N$ and certain coefficients $a_i \in Z_p(\LL_x)$ with $a_0 \neq 0$. We conclude that $$\frac{f}{g} = \frac{f(a_ng^{n-1} + a_{n-1}g^{n-2} + \cdots + a_1)}{g(a_ng^{n-1} + a_{n-1}g^{n-2} + \cdots + a_1)} =  \frac{f(a_ng^{n-1} + a_{n-1}g^{n-2} + \cdots + a_1)}{-a_0} \in F'.$$
Therefore $F = F'$ as claimed. Since $Z(\tg_x)$ is free of rank $p^{\ind(\tg)}$ over $Z_p(\tg_x)$ (Theorem~\ref{main2}) we conclude that $F$ is of dimension $p^{\ind(\tg)}$ over $F_p$. Thanks to \cite{Yak1} this is equal to $p^{\ind(\tg_x)}$ and the theorem follows.
\end{proof}

Before we record a second proof of the corollary we must first recall the rudiments of reduced enveloping algebras. If $W$ is a simple $\LL_x$-module then Schur's lemma asserts that $Z(\LL_x)$ acts upon $W$ by scalars. In particular, $Z_p(\LL_x)$ acts by scalars. The semilinearity of the $p$-operation $v \mapsto v^{[p]}$ implies that for each such module $W$ there exists $\chi \in \LL_x^\ast$ fulfilling $(v^p - v^{[p]}) w = \chi(v)^p w$ for all $v \in \LL_x, w \in W$. We call $\chi$ the $p$-character of $W$ and define the reduced enveloping algebra $$U_\chi(\LL_x) = U(\LL_x)/ I_{\chi}$$ where $I_\chi$ denotes the (left and right) ideal of $U(\LL_x)$ generated by expressions of the form $v^p - v^{[p]} - \chi(v)^p$ with $v\in \LL_x$. Every simple $\LL_x$-module with $p$-character $\chi$ is canonically a simple $U_\chi(\LL_x)$-module and vice versa. Since the following proof of the first Kac-Weisfeiler conjecture applies only to the centraliser of a nilpotent element in type $A$ we shall revert to the notation of the introduction, with $e \in \g$ nilpotent. Recall the decompositions $\g_e = \n^- \oplus \hh \oplus \n^+$ and $\g_e^\ast = (\n^-)^\ast \oplus \hh^\ast \oplus (\n^+)^\ast$ from Section~\ref{JacLoc1}.
\begin{proof}
Define
\begin{eqnarray*}
\Omega = \{ \chi \in \g_e^\ast : \dim(W) = M(\g_e) \text { for all simple } U_\chi(\g_e) \text{-modules } W \}
\end{eqnarray*}
Let $\chi \in \g_e^\ast$. Every $g \in G_e$ acts on $\g_e^\ast$ and induces an isomorphism of algebras
\begin{eqnarray*}
U_\chi(\g_e) \xrightarrow{\sim} U_{\tau(\chi)}(\g_e)
\end{eqnarray*}
Hence $\Omega$ is $G_e$-stable. In \cite[Proposition~4.2(1)]{PS} it was shown that $\Omega$ is a non-empty Zariski open subset of $\mathfrak{g}_e^\ast$. By Theorem~\ref{genstabal} there exists a nonempty open subset $\mathcal{O} \subseteq\g_e^\ast$ such that the $\g_e$-stabalisers of points in $\mathcal{O}$ are $G_e$-conjugate to $\mathfrak{h} = \{\xi_i^{i,\lambda_i - 1-s} :1\leq i\leq n, 0 \leq s < \lambda_i\}$. Since $\mathfrak{h}$ is abelian and stabalises $\mathfrak{n}^-\oplus \mathfrak{n}^+$ the stabalisers of points in $\mathfrak{h}^\ast$ include $\mathfrak{h}$.

Hence we can find $\chi \in \Omega \cap \mathfrak{h}^\ast$, ie. a linear function vanishing on $(\mathfrak{n}^-)^\ast$ and $(\mathfrak{n}^+)^\ast$ with the additional property that every irreducible $U_\chi(\mathfrak{g}_e)$-module has dimension $M(\mathfrak{g}_e)$. Identify $\chi$ with its restriction to $\mathfrak{h}^\ast$. The decomposition of $\g_e$ induces a decomposition of enveloping algebras
\begin{eqnarray*}
U_\chi(\mathfrak{g}_e) = U_0(\mathfrak{n}^-) \otimes_\K U_\chi(\mathfrak{h}) \otimes_\K U_0(\mathfrak{n}^+)
\end{eqnarray*}
Let $W$ be a simple $U_\chi(\mathfrak{g}_e)$-module so that $\dim(W) = M(\g_e)$. By Engel's theorem $\mathfrak{n}^+$ has a common eigenvector in $W$ of eigenvalue 0 and $\mathfrak{h}$ acts diagonally on $W^{\mathfrak{n}^+} \neq 0$. We can find a 1-dimensional $U_\chi(\mathfrak{h}) \otimes_\K U_0(\mathfrak{n}^+)$-module $\K w_\mu \subseteq W^{\mathfrak{n}^+}$ where $h w_\mu = \mu(h) w_\mu$ for all $h \in \mathfrak{h}$. 

The induced module $\Ind_{\mathfrak{h} + \mathfrak{n}^+}^{\mathfrak{g}_e} (\K w_\mu) = U_\chi(\mathfrak{g}_e) \otimes_{U_\chi(\mathfrak{h}) \otimes U_0(\mathfrak{n}^+)} \K w_\mu$ has dimension $\dim(U_0(\mathfrak{n}^-)) = p^{\dim(\mathfrak{n}^-)}$. We have $\dim(\mathfrak{n}^-) = \dim(\mathfrak{n}^+)$ and $\dim(\mathfrak{h}) = N = \ind(\g_e)$ by \cite[Theorem~1]{Yak1}. We obtain $\dim(\g_e) = 2\dim(\mathfrak{n}^-) + \ind(\g_e)$. Therefore
$$\dim(\Ind_{\mathfrak{h} + \mathfrak{n}^+}^{\mathfrak{g}_e} (\K w_\mu)) = p^{\frac{1}{2}(\dim(\mathfrak{g}_e) - \ind(\mathfrak{g}_e))}.$$
By standard theory of induced modules, $W$ is a homomorphic image of $\Ind_{\mathfrak{h} + \mathfrak{n}^+}^{\mathfrak{g}_e} (\K w_\mu)$ so the dimension of the former is bounded above by that of the latter. According to \cite[Theorem~5.4(2)]{PS} it is possible to choose $\chi$ so that all finite dimensional $U_\chi(\g_e)$-modules have dimension divisible by $p^{\frac{1}{2}(\dim(\mathfrak{g}_e) - \ind(\mathfrak{g}_e))}$. This ensures that $W = \Ind_{\mathfrak{h} + \mathfrak{n}^+}^{\mathfrak{g}_e} (\K w_\mu)$, and that $M(\g_e) = p^{\frac{1}{2}(\dim(\mathfrak{g}_e) - \ind(\mathfrak{g}_e))}$. 
\end{proof}
\begin{rem}
\rm{Although our second proof is much less general it does have the virtue of implying that the Verma modules for $\g_e$ are generically simple. Part 3 of Lemma~\ref{spanningdetails} ensures that there is no `triangular' decomposition for classical types other than type $A$, and it seems this proof will extend no further.}
\end{rem}

\subsection{The Zassenhaus Variety}

Retain the notations of theorem \ref{main2}. In this final section we bring together some related results to offer an algebraic characterisation of the singular points on the Zassenhaus variety $\mathcal{Z} = \Specm (Z(\LL_x))$ when $Q$ is of type $A$ or $C$. This scheme of argument was first sketched in \cite[Remark~5.2]{PS} and is quite general.
\begin{thm}\label{zassen}
If $\LL$ is a simple Lie algebra of type $A$ or $C$ over $\K$ and $x \in \LL$ then $\m$ is a smooth point of $\ZZ$ if and only if $U(\LL_x)/ \m U(\LL_x) \cong \text{\emph{Mat}}_{M(\LL_x)}(\K)$.
\end{thm}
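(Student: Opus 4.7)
The strategy is to reformulate the statement in terms of the Azumaya locus of $A := U(\LL_x)$ over $Z := Z(\LL_x)$ and then invoke a general theorem equating this locus with the smooth locus of $\ZZ$. Set $n = \dim \LL_x$ and $l = \ind \LL = \ind \LL_x$. By PBW, $A$ is free of rank $p^n$ over $Z_p(\LL_x)$; by Theorem~\ref{main2}, $Z$ is free of rank $p^l$ over the polynomial subalgebra $Z_p(\LL_x)$. Consequently $Z$ is Cohen-Macaulay of Krull dimension $n$, and $A$ is a module-finite Cohen-Macaulay $Z$-module. The generic fibre of $A$ over $Z$ has $F(\LL_x)$-dimension $p^{n-l} = M(\LL_x)^2$ by Theorem~\ref{KW1}, so $A$ is a prime Noetherian PI-algebra of PI-degree $M(\LL_x)$ over $Z$. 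A standard fact from PI theory (Artin-Procesi) asserts that a maximal ideal $\m$ lies in the Azumaya locus of $A$ over $Z$ precisely when $A/\m A \cong \text{Mat}_{M(\LL_x)}(\K)$, so the problem reduces to showing that the Azumaya locus equals the smooth locus of $\ZZ$.

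The direction $\text{Azumaya} \Rightarrow \text{smooth}$ is straightforward. If $A_\m$ is Azumaya over $Z_\m$ then the two are Morita equivalent, so $\text{gl.dim}(Z_\m) = \text{gl.dim}(A_\m) \leq \text{gl.dim}(A) = n$. Since $Z_\m$ is local of Krull dimension $n$ and has finite global dimension, Serre's regularity criterion implies that $Z_\m$ is regular, whence $\m$ is a smooth point of $\ZZ$.

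The converse, $\text{smooth} \Rightarrow \text{Azumaya}$, is the main obstacle, and is the point at which I would follow the sketch in \cite[Remark~5.2]{PS}. The plan is to invoke the Brown-Goodearl theorem on Azumaya loci: for any prime affine Noetherian PI-algebra $A$ that is module-finite over its centre $Z$, Auslander-regular, and Cohen-Macaulay with $\text{GK-dim}(A) = \text{GK-dim}(Z)$, the Azumaya locus of $A$ equals the smooth locus of $\Specm Z$. For $A = U(\LL_x)$ the Auslander-regularity is a classical property of enveloping algebras of finite-dimensional Lie algebras; the Cohen-Macaulay property of $A$ over $Z$ follows from the freeness of $A$ over the polynomial algebra $Z_p(\LL_x)$ established above; and the equality of GK-dimensions is immediate from module-finiteness. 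Applying this theorem yields the required equality of loci, completing the proof.
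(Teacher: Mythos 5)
Your overall strategy---reinterpret the statement via the Azumaya locus of $U(\LL_x)$ over $Z(\LL_x)$ and invoke Brown--Goodearl---is indeed the approach the paper takes, and your ``Azumaya $\Rightarrow$ smooth'' direction via Morita equivalence and Serre's criterion is fine. However, the ``Brown--Goodearl theorem'' you state in the converse direction is too strong and, as stated, false. The relevant result (\cite[Theorem~3.8]{BG}) does \emph{not} conclude that the Azumaya locus equals the smooth locus merely from primeness, module-finiteness, Auslander-regularity and Cohen--Macaulayness; it additionally requires that the complement of the Azumaya locus in $\Specm Z$ have codimension at least $2$. This extra hypothesis is essential: take $\LL$ to be the three-dimensional Heisenberg Lie algebra with $[x,y]=z$ and trivial $p$-operation. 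Then $U(\LL)$ is a prime affine Noetherian PI-algebra, module-finite over its centre $Z=\K[x^p,y^p,z]$, Auslander-regular and Cohen--Macaulay, and $Z$ is a polynomial ring, hence $\Specm Z$ is smooth everywhere; yet the non-Azumaya locus is the hypersurface $\{z=0\}$ (where $U_\chi$ is commutative), so the Azumaya locus is a proper open subset of the smooth locus. Your claimed theorem would force equality, a contradiction.

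The verification of that codimension condition is precisely the nontrivial content of the paper's proof and the part you have skipped. Concretely, one shows that the locus $\CC\subseteq\ZZ$ where $U(\LL_x)/\m U(\LL_x)\not\cong\text{Mat}_{M(\LL_x)}(\K)$ satisfies $\codim_\ZZ\CC\geq 2$. This is done by pushing $\CC$ forward along the finite map $\tau:\ZZ\to\LL_x^\ast$, observing that $\tau(\CC)$ is contained in the set $\mathscr{X}_{M(\LL_x)}$ of $p$-characters $\chi$ for which some finite-dimensional $U_\chi(\LL_x)$-module has dimension not divisible by $p^{\frac{1}{2}(\dim\LL_x-\ind\LL_x)}$, then invoking \cite[Proposition~5.2]{PS} to bound $\dim\mathscr{X}_{M(\LL_x)}$ by $\dim(\LL_x^\ast)_{\sing}$, and finally using the fact (from \cite[Theorems~3.4, 3.11]{PPY}, valid in good characteristic) that $\codim_{\LL_x^\ast}(\LL_x^\ast)_{\sing}\geq 2$ for centralisers in types $A$ and $C$. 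This chain of reductions is where the specificity of types $A$ and $C$ and the representation-theoretic input from Theorem~\ref{KW1} actually enter; without it, one has no control over the codimension of the non-Azumaya locus, and the Brown--Goodearl machinery cannot be applied.
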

\begin{proof}
Recall that the singular locus $(\LL_x^\ast)_\text{sing}$ is defined to be the set of all $\chi \in \LL_x^\ast$ such that $\dim((\LL_x)_{\chi}) > \ind(\LL_x)$. By \cite[Theorems~3.4 \& 3.11]{PPY}, $\codim_{\LL_x^\ast} (\LL_x^\ast)_{\text{sing}} \geq 2$ provided $x$ is nilpotent. Strictly speaking they worked over $\C$ there but the assumption was not necessary - indeed \cite{Yak1} works in good characteristic and that article is the basis for the aforementioned two theorems. Now the fact that $\codim_{\LL_x^\ast} (\LL_x^\ast)_{\text{sing}} \geq 2$ when $x$ is not nilpotent may be proven using a reduction to the nilpotent case very similar to the one used for Theorem~\ref{main1}. For a non-negative integer $m$ we let $\mathscr{X}_m$ denote the set of all $\chi \in \LL_x^\ast$ such that $U_\chi(\LL_x)$ has a module of finite dimension not divisible by $p^m$. According to \cite[Proposition~5.2,]{PS}  $\dim(\mathscr{X}_{M(\LL_x)}) \leq \dim(\LL^\ast_\text{sing})$, from whence we deduce that $\codim_{\LL_x^\ast} (\mathscr{X}_{M(\LL_x)}) \geq 2$. 

By \cite[Theorems~5 \& 6]{Zas}, we may infer that there is a closed subset $\CC \subseteq \ZZ$ such that $$\ZZ \backslash \CC = \{ \m \in \ZZ : U(\LL_x)/ \m U(\LL_x) \cong \text{Mat}_{M(\LL_x)}(\K)\}.$$ We claim that $\codim_{\ZZ} \CC \geq 2$. The inclusion $Z_p(\LL_x) \ra Z(\LL_x)$ induces as finite morphism of maximal spectra $\ZZ \ra \Specm (Z_p(\LL_x))$ and by identifying $\Specm (Z_p(\LL_x))$ and $\LL_x^\ast$ we obtain a map $\tau : \Specm Z(\LL_x ) \ra \LL_x^\ast$. Explicitly, $\m \ra \chi$ where $\chi$ is the linear functional such that $\m \cap Z_p(\LL_x) = \m_\chi$ and $U_\chi(\LL_x) = U(\LL_x) / \m_\chi U(\LL_x)$. This map is closed and has finite fibres so $\codim_{\ZZ} \CC = \codim_{\LL_x^\ast}\tau(\CC)$. If $\chi \in \tau(\CC)$ then $U_\chi(\LL_x)$ has a module of dimension less than $M(\LL_x)$ so that $\chi \in \X_{M(\LL_x)}$. We conclude that $$\codim_{\ZZ} \CC = \codim_{\LL_x^\ast}\tau(\CC) \geq \codim_{\LL_x^\ast} \X_{M(\LL_x)} \geq 2.$$ Denote by $Z_\m$ and $U_\m$ the localisation of $Z(\LL_x)$ and $U(\LL_x)$ respectively at $\m \in \ZZ$ and suppose $\m \in \CC$. Certainly $U_\m$ is a finite module over $Z_\m$. Furthermore the unique maximal ideal of $Z_\m$ is $\m Z_\m$ and $U_\m/\m U_\m \cong U(\LL_x) /\m U(\LL_x)$ is a matrix algebra over its centre $Z / \m Z \cong \K$. By \cite{DI} theorem 7.1, $U_\m$ is a separable algebra over its centre $Z_\m$ (ie. $U_\m$ is an Azumaya algebra) for all $\m \in \CC$. Combining these deductions with \cite[2.2, 2.3]{BG}, we have satisfied the assumptions of \cite[Theorem~3.8]{BG} and the result follows.
\end{proof}


\begin{thebibliography}{999}
\bibitem[2]{BB} J. Brown, J. Brundan, \emph{Elementary Invariants for Centralizers of Nilpotent Matrices}, J. Aust. Math. Soc. 86, pp. 1Ð15, 2009
\bibitem[3]{BG} K. A. Brown, K. R. Goodearl, \emph{Homological Aspects of Noetherian PI Hopf Algebras and Irreducible Modules of Maximal Dimension}, J. Algebra 198, pp. 240-266, 1997
\bibitem[4]{CMM} J-V. Charbonnel, A. Moreau, A. Moreau \emph{The Index of Centralizers of Elements of Reductive Lie Algebras}, Doc. Mathematica 15, pp. 347-380, 2010
\bibitem[5]{DI} F. DeMeyer, E. Ingraham, \emph{Separable Algebras over Commutative Rings}, Lect. notes in math. 181, Springer, 1971
\bibitem[6]{Dix} J. Dixmier, \emph{Alg\`{e}bres Enveloppantes}, Gauthier-Villars \'{E}ditors, 1974
\bibitem[7]{Ela} A. G. Elashvili, \emph{Canonical form and stationary subalgebras of points in general position for simple linear Lie groups}, Funkts. Anal. Prilozhen., 6, No. 1, pp. 51-62, 1972; English transl.: 
Functional Anal. Appl., 6, No. 1, pp. 44-53, 1972 
\bibitem[8]{FP} E. M. Friedlander, B. J. Parshall, \emph{Rational Actions Associated to the Adjoint Representation}, Ann. Scient. Ec. Norm. Sup. 20, pp. 215-226, 1987 
\bibitem[9]{Hum} J. E. Humphreys, \emph{Linear Algebraic Groups}, Springer, 1998
\bibitem[10]{Jan} J. C. Jantzen, \emph{Nilpotent Orbits in Representation Theory}, Chapter I of: J. P. Anker, B. Orsted, \emph{Lie Theory: Lie Algebras and Representations}, BirkhŠuser, 2004
\bibitem[11]{Jan.} J. C. Jantzen, \emph{Lie algebras in positive characteristic}, Adv. St. Pure Math. 3, 2004, pp. 1-44
\bibitem[11]{Jac} N. Jacobson, \emph{Lie Algebras}, Courier Dover Publications, 1979
\bibitem[11]{Kac} V. Kac, \emph{Review of ``Irreducible representations of Lie algebras of reductive groups and the Kac-Weisfeiler conjecture", A. Premet}
\bibitem[12]{Lev} P. Levy, \emph{Involutions of reductive Lie algebras in positive characteristic}, Adv. Math., 210, 2, pp. 505Ð559, 2007
\bibitem[13]{Mil} A. A. Mil'ner, \emph{Maximal Degree of Irreducible Lie Algebra Representations Over a Field of Prime Characteristic}, Funks. Anal., (2) 14, pp. 67-68, 1980
\bibitem[14]{Pan} D. Panyushev, \emph{Inductive formulas for the index of seaweed Lie algebras}, Mosc. Math. J. 1, no. 2, pp. 221-241, 303, 2001
\bibitem[15]{PPY} D. Panyushev, A. Premet, O.Yakimova, \emph{On Symmetric Invariants of Centralisers in Reductive Lie Algebras}, J. Algebra 313, pp. 343-391, 2007
\bibitem[16]{Pre} A. Premet, \emph{Complexity of Lie Algebra Representations and Nilpotent Elements of Stabalizers}, Math. Z. 228, pp. 255-282, 1998 
\bibitem[15]{PS} A. Premet, S. Skryabin \emph{Representations of Restricted Lie Algebras and Families of Associative $L$-Algebras}, J. Reine. Agnew. Math. 507, pp. 189-218, 1999
\bibitem[16]{PT} A. Premet, R. Tange, \emph{Zassenhaus varieties of general linear Lie algebras}, J. Alg., 294, pp. 177Ð195, 2005
\bibitem[16]{Ros} M. Rosenlicht, \emph{On Quotient Varieties and the Affine Embedding of Certain Homogeneous Spaces}, Trans. Amer. Math. Soc. 101, pp. 211-223, 1961
\bibitem[17]{Skr} S. Skryabin, \emph{Invariants of Finite Group Schemes}, J. London Math Soc. (2) 65, pp. 339-360, 2002
\bibitem[18]{Spr} T. A. Springer, \emph{Linear Algebraic Groups}, Birkh¬auser, first edition, 1998
\bibitem[19]{Smi} L. Smith, \emph{Polynomial Invariants Finite Groups}, Research Notes in Mathematics, 1942
\bibitem[19]{Tan} R. Tange, \emph{The Zassenhaus Variety of Lie Algebras in Positive Characteristic}, Volume 224, 1, pp. 340-354, 2010
\bibitem[20]{TY} P. Tauvel, R. Yu, \emph{Indice et Formes Lin\'{e}ares Stable dans les Alg\`{e}bres de Lie}, J. Algebra 273, pp. 507-516, 2004
\bibitem[21]{Vel} F. D. Veldkamp, \emph{The Centre of the Universal Enveloping Algebra of a Lie Algebra in Characteristic $p$}, Ann. Scient. Ec. Norm. Sup. 4, pp. 217-240, 1972
\bibitem[22]{Yak1} O. Yakimova, \emph{The Index of Centralizers of Elements in Classical Lie Algebras} Funkts. Anal. Prilozhen., 40, No. 1, pp. 52-64, 2006; English transl.: 
Functional Anal. Appl., 40, No. 1, pp. 42-51, 2006
\bibitem[23]{Yak2} O. Yakimova, \emph{Surprising Properties of Centralisers in Classical Lie Algebras}, Ann. Inst. Fourier 59, 3, pp. 903Ð935, 2009
\bibitem[24]{Yak3} O. Yakimova, \emph{A Counterexample to Premet's and Joseph's Conjectures}, , Bull. London Math. Soc. 39 no.5, pp. 749-754, 2007
\bibitem[25]{Zas} H. Zassenhaus, \emph{The Representations of Lie Algebras of Prime Characteristic}, Proc. Glasgow Math. Assoc. 2, pp. 1-36, 1954
\end{thebibliography}
\end{document}